\documentclass[12pt]{article}
\usepackage[]{amsmath,amssymb}
\usepackage{amscd}
\usepackage{latexsym}
\usepackage{cite}
\usepackage{amsthm}

\newtheorem{definition}{Definition}[section]
\newtheorem{theorem}[definition]{Theorem}
\newtheorem{lemma}[definition]{Lemma}
\newtheorem{corollary}[definition]{Corollary}

\newtheorem{example}[definition]{Example}

\newtheorem{problem}[definition]{Problem}
\newtheorem{note}[definition]{Note}

\newtheorem{proposition}[definition]{Proposition}

\typeout{Substyle for letter-sized documents. Released 24 July 1992}


\setlength{\topmargin}{-1in}
\setlength{\headheight}{1.5cm}
\setlength{\headsep}{0.3cm}
\setlength{\textheight}{9in}
\setlength{\oddsidemargin}{0cm}
\setlength{\evensidemargin}{0cm}
\setlength{\textwidth}{6.5in}

\begin{document}
\title{\bf  
The Lusztig automorphism
of the
\\
$q$-Onsager 
algebra
}
\author{
Paul Terwilliger 
}
\date{}

\maketitle
\begin{abstract}
Pascal Baseilhac and Stefan Kolb recently introduced the Lusztig automorphism
$L$ of the $q$-Onsager algebra $\mathcal O_q$.
In this paper, we express each of $L, L^{-1}$ as a formal sum
involving some quantum adjoints. 
In addition,
(i) we give a computer-free proof that $L$ exists;
(ii) we establish the higher order $q$-Dolan/Grady relations previously
conjectured by Baseilhac and Thao Vu; (iii) we obtain a Lusztig
automorphism for the current algebra $\mathcal A_q$ associated with
$\mathcal O_q$; (iv) we describe what happens
when a finite-dimensional irreducible $\mathcal O_q$-module
is twisted via $L$.

\bigskip
\noindent
{\bf Keywords}. $q$-Onsager algebra, tridiagonal pair.
\hfil\break
\noindent {\bf 2010 Mathematics Subject Classification}. 
Primary: 33D80. Secondary  17B40.

 \end{abstract}

\section{Introduction}

\noindent Throughout this paper $\mathbb F$ denotes a field.
Fix $0 \not=q \in \mathbb F$ that is not a root of unity. Recall 
the notation
\begin{eqnarray*}
\lbrack n\rbrack_q = \frac{q^n-q^{-n}}{q-q^{-1}}
\qquad \qquad n = 0,1,2,\ldots
\end{eqnarray*}
We will be discussing algebras. An algebra is meant to 
be associative and have a $1$. A subalgebra has the same $1$
as the parent algebra.

\begin{definition}
\label{def:qons}
{\rm (See \cite[Section~2]{bas1},
\cite[Definition~3.9]{qSerre}.)}
\rm Let ${\mathcal O}_q$ denote the 
$\mathbb F$-algebra with generators $A,B$ and relations
\begin{eqnarray}
&&A^3B-\lbrack 3\rbrack_q A^2BA+ 
\lbrack 3\rbrack_q ABA^2 -BA^3 = (q^2-q^{-2})^2 (BA-AB),
\label{eq:dg1}
\\
&&B^3A-\lbrack 3\rbrack_q B^2AB + 
\lbrack 3\rbrack_q BAB^2 -AB^3 = (q^2-q^{-2})^2 (AB-BA).
\label{eq:dg2}
\end{eqnarray}
\noindent We call ${\mathcal O}_q$ the {\it  $q$-Onsager algebra}. 
The relations (\ref{eq:dg1}),  (\ref{eq:dg2}) 
are called the {\it $q$-Dolan/Grady relations}.
\end{definition}
\noindent We now give some background on $\mathcal O_q$; for more
information see
\cite{madrid}.
There is a family of algebras called tridiagonal algebras
\cite[Definition~3.9]{qSerre} that arise in the study of
($P$ and $Q$)-polynomial association schemes
\cite[Lemma~5.4]{tersub3} and tridiagonal pairs
\cite[Theorem~10.1]{TD00},
\cite[Theorem~3.10]{qSerre}.
The algebra $\mathcal O_q$ is the ``most general'' example
of a tridiagonal algebra
\cite[p.~70]{ItoTer}.
Applications of $\mathcal O_q$ to 
tridiagonal pairs can be found in
    \cite{bas6,  INT,
TD00, ItoTer,
IT:aug,
LS99,
 qSerre,
aw}.
The algebra $\mathcal O_q$ has applications
to quantum integrable models
\cite{bas2,
bas1,
basDef,
    bas6,
 bas8,
basXXZ,
basBel,
BK05,
bas4,
  basKoi},
reflection equation algebras
\cite{basnc},
and coideal subalgebras
\cite{bc,kolb, kolb1}.
There is an algebra homomorphism from $\mathcal O_q$ into
the algebra $\square_q$ 
\cite[Proposition~5.6]{pospart},
and the universal Askey-Wilson algebra
\cite[Sections~9,10]{uaw}.
\medskip


\noindent In \cite{BK} Pascal Baseilhac and Stefan Kolb
found an automorphism $L$ of ${\mathcal O}_q$
that acts as follows:
\begin{eqnarray}
&&L(A)=A, \qquad \qquad
L(B) = B + \frac{qA^2B-(q+q^{-1})ABA+q^{-1}BA^2}{(q-q^{-1})(q^2-q^{-2})},
\label{eq:bk1}
\\
&&L^{-1}(A)=A, \qquad \qquad
L^{-1}(B) = B + \frac{q^{-1}A^2B-(q+q^{-1})ABA+qBA^2}{(q-q^{-1})(q^2-q^{-2})}.
\label{eq:bk2}
\end{eqnarray}
They called $L$ the {\it Lusztig automorphism}
of ${\mathcal O}_q$. In our view $L$ is a profound
discovery, and worthy of much further study.
In this paper,
we express each of $L$, $L^{-1}$ as a formal sum that involves
some quantum adjoints of $A$. In addition,
(i) we obtain 
 a computer-free proof that $L$ exists; (ii)
 we establish the higher order $q$-Dolan/Grady relations
 previously conjectured by Baseilhac and Thao Vu \cite{BVu};
 (iii) we obtain a Lusztig automorphism for the current algebra
$\mathcal A_q$ \cite[Definition 3.1]{basnc} associated with $\mathcal O_q$;
(iv) we describe what happens when a finite-dimensional irreducible
$\mathcal O_q$-module is twisted via $L$.

\section{Statement of the main result}
We will state our main result after a few comments.
Recall the natural numbers $\mathbb N=\lbrace 0,1,2,\ldots\rbrace$
and integers
 $\mathbb Z=\lbrace 0,\pm 1,\pm 2,\ldots\rbrace$.
Let $\mathcal A$ denote an $\mathbb F$-algebra.
For $A \in \mathcal A$,
the corresponding adjoint map is
${\rm ad}\, A: \mathcal A \to \mathcal A$,
$X \mapsto A X - X A$.
For $r \in \mathbb Z$ define the quantum adjoint map 
${\rm ad}_r A: \mathcal A \to \mathcal A$,
$X \mapsto q^r A X - q^{-r} X A$.
We have ${\rm ad}\,A = {\rm ad}_0 A$.
Note that
${\rm ad}_r A$, 
${\rm ad}_s A$ commute for $r,s \in \mathbb Z$.
\medskip

\noindent
We now state our main result.

\begin{theorem}
\label{conj:T}
The Lusztig automorphism $L$ of $\mathcal O_q$
satisfies
\begin{eqnarray}
&&L = I + \sum_{n=1}^\infty
\Biggl(
\frac{{\rm ad}\,A}{q-q^{-1}}
\prod_{r=1}^{n-1} \frac{(q^{2r}-q^{-2r})^2 I +
({\rm ad}_r A )({\rm ad}_{-r}A)}{(q^{2r}-q^{-2r})(q^{2r+1}-q^{-2r-1})}\Biggr) 
\frac{{\rm ad}_n A}{q^{2n}-q^{-2n}},
\label{eq:conj1}
\\
&&L^{-1} = I + \sum_{n=1}^\infty 
\Biggl(
\frac{{\rm ad}\,A}{q-q^{-1}}
\prod_{r=1}^{n-1} \frac{(q^{2r}-q^{-2r})^2 I +
({\rm ad}_r A )({\rm ad}_{-r} A)}{(q^{2r}-q^{-2r})(q^{2r+1}-q^{-2r-1})}\Biggr) 
\frac{{\rm ad}_{-n} A}{q^{2n}-q^{-2n}}.
\label{eq:conj2}
\end{eqnarray}
Moreover for all $X \in \mathcal O_q$, in the above sums 
the large parenthetical expression 
vanishes at $X$ for all but finitely $n$.
\end{theorem}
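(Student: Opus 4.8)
The plan is to read the right-hand sides of (\ref{eq:conj1}) and (\ref{eq:conj2}) as linear maps $\mathcal L,\mathcal L'\colon\mathcal O_q\to\mathcal O_q$ and to prove $\mathcal L=L$, $\mathcal L'=L^{-1}$ by showing that $\mathcal L,\mathcal L'$ are \emph{algebra endomorphisms} of $\mathcal O_q$ whose action on the generators $A,B$ is that of $L,L^{-1}$. Since $\mathcal O_q$ is generated by $A,B$ and two algebra homomorphisms agreeing on a generating set coincide, this yields the theorem. I will carry out $\mathcal L$; the argument for $\mathcal L'$ is identical after the substitution ${\rm ad}_n A\mapsto{\rm ad}_{-n}A$ (equivalently $q\mapsto q^{-1}$), which is exactly the difference between (\ref{eq:bk1}) and (\ref{eq:bk2}). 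Before anything else one needs $\mathcal L$ to be well defined, which is the ``Moreover'' claim that for each $X$ only finitely many summands are nonzero; I would obtain this by induction on the number of factors $B$ in a word representing $X$, stripping leading and trailing powers of $A$ via the bimodule property below and reducing to words beginning and ending in $B$, where the vanishing of the $n$-th summand for $n$ large is the system of higher order $q$-Dolan/Grady relations of Baseilhac and Vu. The subtle point is that although each summand is nonzero in the free algebra, the relations (\ref{eq:dg1}), (\ref{eq:dg2}) break the grading by $A$-degree, and the resulting cross-degree cancellations kill all but finitely many summands in $\mathcal O_q$.

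Next I would record the action on generators. For $\mathcal L(A)=A$: each ${\rm ad}_n A$ sends a power of $A$ to a scalar multiple of a higher power, the inner product of factors preserves $\mathbb F[A]$, and the outermost ${\rm ad}\,A={\rm ad}_0A$ annihilates $\mathbb F[A]$; hence every summand kills $A$. For $\mathcal L(B)=L(B)$: the $n=1$ summand applied to $B$ is
\[
\frac{{\rm ad}\,A}{q-q^{-1}}\,\frac{{\rm ad}_1A}{q^2-q^{-2}}(B)=\frac{qA^2B-(q+q^{-1})ABA+q^{-1}BA^2}{(q-q^{-1})(q^2-q^{-2})}=L(B)-B,
\]
matching (\ref{eq:bk1}), while the summands with $n\ge 2$ annihilate $B$ in $\mathcal O_q$ by the higher order relations just mentioned (the case $n=2$ recovering (\ref{eq:dg1})). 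Thus $\mathcal L(B)=B+(L(B)-B)=L(B)$.

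Finally I would prove that $\mathcal L$ is a homomorphism, which is the heart of the matter. From the $q$-Leibniz rule ${\rm ad}_r A(XY)=q^{\,b}\,{\rm ad}_aA(X)\,Y+q^{-a}\,X\,{\rm ad}_bA(Y)$ for any $a+b=r$, specialized with ${\rm ad}_0A(A)=0$, each ${\rm ad}_r A$ commutes with left and with right multiplication by $A$; since $\mathcal L$ is assembled from the mutually commuting operators ${\rm ad}_r A$, it follows that $\mathcal L$ is an $\mathbb F[A]$-bimodule endomorphism, $\mathcal L(A^iXA^j)=A^i\mathcal L(X)A^j$. Granting this and $\mathcal L(A)=A$, an induction on word length reduces the homomorphism property $\mathcal L(YZ)=\mathcal L(Y)\mathcal L(Z)$ to the single identity
\[
\mathcal L(BX)=L(B)\,\mathcal L(X)\qquad(X\in\mathcal O_q).
\]
This identity is the main obstacle: $\mathcal L$ is an infinite sum of operators that are individually far from multiplicative, and expanding ${\rm ad}_r A(BX)$ by the $q$-Leibniz rule entangles the $B$-slot with the $X$-slot. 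The work is to resum these contributions and show, using the relations of the previous paragraph to collapse the $B$-slot, that they reorganize into the clean product $L(B)\,\mathcal L(X)$; the precise scalar denominators appearing in (\ref{eq:conj1}) are what make this resummation telescope. With the homomorphism property in hand, $\mathcal L$ is an endomorphism agreeing with $L$ on $A,B$, whence $\mathcal L=L$, and likewise $\mathcal L'=L^{-1}$. As a byproduct, checking $\mathcal L\mathcal L'=\mathcal L'\mathcal L=I$ directly (again via the $q$-Leibniz rule) exhibits $\mathcal L$ as an automorphism without invoking Baseilhac--Kolb, which is the computer-free existence proof (i).
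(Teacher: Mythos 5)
Your outline has the right overall shape (it is, in skeleton, the paper's own strategy: check the generators, prove multiplicativity, and control finiteness through a filtration-by-$B$-degree argument), and several of your preparatory steps are sound: each ${\rm ad}_rA$ does commute with left and right multiplication by $A$, so the maps are $\mathbb F[A]$-bimodule endomorphisms; the reduction of the homomorphism property to the single identity $\mathcal L(BX)=L(B)\,\mathcal L(X)$ is legitimate; and the computation on generators is correct, with the $n\ge 2$ summands killing $B$ precisely because $({\rm bad}\,A)_2(B)=0$, i.e.\ relation (\ref{eq:dg1}) (Example \ref{ex:ex1}). But at the two places where the theorem has actual content, you name the difficulty instead of resolving it. The identity $\mathcal L(BX)=L(B)\,\mathcal L(X)$, which you correctly call ``the main obstacle,'' is never proved: ``the work is to resum these contributions'' is a statement of intent, not an argument. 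The paper's Propositions \ref{prop:TXY} and \ref{prop:primever} are exactly this resummation, carried out by a double induction resting on the four Leibniz-type identities in Lemmas \ref{lem:adaBB}--\ref{lem:adaBS}; similarly your parenthetical claim that $\mathcal L\mathcal L'=\mathcal L'\mathcal L=I$ can be ``checked directly'' is the content of Proposition \ref{prop:TTp}, another genuine induction. Without these computations your proposal is a plan, not a proof.

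The second gap is in the ``Moreover'' (well-definedness) clause, and here your proposed route would fail even as a plan. You ground the finiteness in the higher order $q$-Dolan/Grady relations of Baseilhac and Vu, but those were \emph{conjectures}: the present paper proves them (Section 6) as a corollary of the very machinery you are trying to bypass, so invoking them is circular relative to this paper and unjustified on its own. Moreover, even granting those relations, they control $({\rm bad}\,A)_{r+1}(B^r)$, i.e.\ powers of $B$, whereas a word beginning and ending in $B$ generally contains interior factors of $A$ (e.g.\ $BA^iB$) that your $\mathbb F[A]$-bimodule trick cannot strip. What the finiteness for arbitrary elements actually requires is the filtration product rule $\mathcal A^{(r)}\mathcal A^{(s)}\subseteq\mathcal A^{(r+s)}$ (Lemma \ref{lem:mult}, via Proposition \ref{lem:rs}), and that rule is itself a consequence of the second equation in Proposition \ref{prop:TXY} --- the same missing resummation as above. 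So both gaps trace back to one unperformed computation; once it is supplied, the paper's route through $\mathcal A^{(n)}$, Lemma \ref{lem:three}, Theorem \ref{thm:standard}, and Proposition \ref{prop:LS} delivers the theorem, including the computer-free existence of $L$.
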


\noindent  We mention two consequences of Theorem
\ref{conj:T}.

\begin{corollary}
\label{conj:comA}
The automorphism $L$ fixes every
element of $\mathcal O_q$ that commutes with $A$.
\end{corollary}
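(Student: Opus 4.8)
The plan is to show that every summand on the right-hand side of (\ref{eq:conj1}) annihilates any $X$ that commutes with $A$, so that $(L-I)(X)=0$. Fix $X \in \mathcal O_q$ with $AX=XA$; equivalently $({\rm ad}\,A)(X)=0$, where ${\rm ad}\,A = {\rm ad}_0 A$. I want to conclude $L(X)=X$ directly from the formula (\ref{eq:conj1}), without invoking the Baseilhac--Kolb action (\ref{eq:bk1}).

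The key observation is that each summand in (\ref{eq:conj1}) has the operator ${\rm ad}\,A = {\rm ad}_0 A$ as its leftmost factor, and that all the operators occurring in that summand commute with one another. Indeed, the $n$th summand is a composite of ${\rm ad}\,A$, the map ${\rm ad}_n A$, and the factors
\[
\frac{(q^{2r}-q^{-2r})^2 I + ({\rm ad}_r A)({\rm ad}_{-r}A)}{(q^{2r}-q^{-2r})(q^{2r+1}-q^{-2r-1})}\qquad (1\le r\le n-1),
\]
each of which is a polynomial in the maps ${\rm ad}_s A$ with $s\in\mathbb{Z}$. Since ${\rm ad}_r A$ and ${\rm ad}_s A$ commute for all $r,s\in\mathbb{Z}$, these operators form a commuting family containing ${\rm ad}_0 A$.

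Because the operators in the $n$th summand commute, I may reorder the composite so that ${\rm ad}\,A = {\rm ad}_0 A$ is applied first, that is, innermost, to $X$. But $({\rm ad}\,A)(X) = AX-XA = 0$, so the $n$th summand sends $X$ to $0$. As this holds for every $n\ge 1$, the entire sum $(L-I)(X)$ vanishes, whence $L(X)=X$. Note that no question of convergence arises here: each individual term vanishes at $X$, which is even stronger than the finiteness asserted in the last sentence of Theorem \ref{conj:T}.

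The only point requiring care---and the main (minor) obstacle---is the justification of this reordering. It rests entirely on the commutativity of the family $\{{\rm ad}_r A : r \in \mathbb{Z}\}$ recorded before Theorem \ref{conj:T}: one checks that each bracketed factor above is a polynomial in these commuting maps, hence commutes with ${\rm ad}_0 A$, so that ${\rm ad}_0 A$ may be moved past all other factors to act first on $X$. With this in hand the argument is entirely term-by-term and requires no further computation.
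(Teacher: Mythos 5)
Your proof is correct and takes essentially the same route as the paper: there, an element commuting with $A$ lies in $\mathcal{O}_q^{(0)}$ (Example \ref{ex:com}), each term $S_n$ with $n \ge 1$ kills it because ${\rm bad}_0\,A \propto {\rm ad}\,A$ is a commuting factor of $S_n$ (Lemma \ref{lem:Tdef}), and hence $L = S = S_0 = I$ on such elements (Example \ref{eq:ex0}, Proposition \ref{prop:LS}(iv)). Your term-by-term vanishing argument, justified by the commutativity of the maps ${\rm ad}_r A$, is exactly the content of those lemmas, merely phrased directly in terms of the formula (\ref{eq:conj1}).
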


\begin{corollary}
\label{conj:com1A}
Pick $X \in \mathcal O_q$ such that
\begin{eqnarray*}
A^3X-\lbrack 3\rbrack_q A^2XA+ 
\lbrack 3\rbrack_q AXA^2 -XA^3 = (q^2-q^{-2})^2 (XA-AX).
\end{eqnarray*}
Then $L$ sends
\begin{eqnarray*}
X \mapsto 
 X + \frac{qA^2X-(q+q^{-1})AXA+q^{-1}XA^2}{(q-q^{-1})(q^2-q^{-2})},
 \end{eqnarray*}
 and $L^{-1}$ sends
\begin{eqnarray*}
X \mapsto 
 X + \frac{q^{-1}A^2X-(q+q^{-1})AXA+qXA^2}{(q-q^{-1})(q^2-q^{-2})}.
 \end{eqnarray*}
\end{corollary}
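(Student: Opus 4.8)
The plan is to recognize the hypothesis on $X$ as the single assertion that the $r=1$ factor occurring in the product of Theorem~\ref{conj:T} annihilates ${\rm ad}\,A(X)$, and then to use commutativity of the quantum adjoints to collapse the infinite sum down to its $n=1$ term.

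First I would put the hypothesis into operator form. A direct computation shows that
\[
({\rm ad}_0 A)({\rm ad}_1 A)({\rm ad}_{-1} A)(X)
= A^3X-[3]_q A^2XA+[3]_q AXA^2-XA^3 ,
\]
while the right-hand side of the hypothesis is $(q^2-q^{-2})^2(XA-AX)=-(q^2-q^{-2})^2({\rm ad}_0 A)(X)$. Since ${\rm ad}\,A={\rm ad}_0 A$, the hypothesis is therefore equivalent to
\[
\bigl[(q^2-q^{-2})^2 I + ({\rm ad}_1 A)({\rm ad}_{-1} A)\bigr]\,({\rm ad}\,A)(X)=0 .
\]
The bracketed operator here is exactly the numerator of the $r=1$ factor of the product $\prod_{r=1}^{n-1}$ appearing in Theorem~\ref{conj:T}.

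Next I would show that every term with $n\ge 2$ vanishes at $X$. For each such $n$ the product $\prod_{r=1}^{n-1}$ contains the $r=1$ factor. Because all the maps ${\rm ad}_r A$ commute, the composite operator defining the $n$-th term is unchanged under reordering; I group the leading ${\rm ad}\,A$ together with the $r=1$ factor and let them act on $X$ first. The displayed identity then gives $0$, so the entire $n$-th term vanishes at $X$ (in particular this is consistent with, and sharper than, the finiteness clause of Theorem~\ref{conj:T}). Hence only the $n=1$ term survives. For $L$ this term is $\tfrac{1}{(q-q^{-1})(q^2-q^{-2})}({\rm ad}\,A)({\rm ad}_1 A)(X)$, which expands to the stated formula; for $L^{-1}$ one replaces ${\rm ad}_1 A$ by ${\rm ad}_{-1} A$ and obtains the second formula.

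The main obstacle is the operator identification in the first step: one must verify that the cubic left-hand side of the hypothesis equals the triple composite $({\rm ad}_0 A)({\rm ad}_1 A)({\rm ad}_{-1} A)(X)$, after which commutativity does all the remaining work. I would also remark that Corollary~\ref{conj:comA} is the parallel special case ${\rm ad}\,A(X)=0$: there the leading factor ${\rm ad}\,A$ by itself annihilates $X$, so \emph{every} term with $n\ge 1$ vanishes and $L(X)=X$. Finally, taking $X=B$ in Corollary~\ref{conj:com1A} recovers the original Baseilhac--Kolb formulas (\ref{eq:bk1}), (\ref{eq:bk2}), since $B$ satisfies (\ref{eq:dg1}).
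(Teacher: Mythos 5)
Your proposal is correct and follows essentially the same route as the paper: the hypothesis is exactly the statement that $({\rm bad}\,A)_2(X)=0$ in the paper's notation (Example \ref{ex:ex1}, i.e.\ membership in $\mathcal A^{(1)}$), the vanishing of all terms with $n\geq 2$ via commutativity of the quantum adjoints is the content of Lemma \ref{lem:Tdef}, and the surviving $n\leq 1$ terms expand to the stated formulas as in Example \ref{eq:ex1}, with $S=L$, $S'=L^{-1}$ supplied by Proposition \ref{prop:LS}(iv). Your operator identification and the expansion of the $n=1$ terms check out, so nothing is missing.
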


\noindent We will obtain Theorem
 \ref{conj:T} as a consequence of a more general result,
 which we now summarize.
  Let $\mathcal A$ denote an $\mathbb F$-algebra
 and let $A \in \mathcal A$. Consider the formal sums
\begin{eqnarray}
&&S = I + \sum_{n=1}^\infty
\Biggl(
\frac{{\rm ad}\,A}{q-q^{-1}}
\prod_{r=1}^{n-1} \frac{(q^{2r}-q^{-2r})^2 I +
({\rm ad}_r A )({\rm ad}_{-r}A)}{(q^{2r}-q^{-2r})(q^{2r+1}-q^{-2r-1})}\Biggr) 
\frac{{\rm ad}_n A}{q^{2n}-q^{-2n}},
\label{eq:conj1S}
\\
&&S' = I + \sum_{n=1}^\infty 
\Biggl(
\frac{{\rm ad}\,A}{q-q^{-1}}
\prod_{r=1}^{n-1} \frac{(q^{2r}-q^{-2r})^2 I +
({\rm ad}_r A )({\rm ad}_{-r} A)}{(q^{2r}-q^{-2r})(q^{2r+1}-q^{-2r-1})}\Biggr) 
\frac{{\rm ad}_{-n} A}{q^{2n}-q^{-2n}}.
\label{eq:conj2S}
\end{eqnarray}
 An element $X \in \mathcal A$ is called
 $A$-standard 
  whenever 
 the large parenthetical expression in
(\ref{eq:conj1S}),
(\ref{eq:conj2S})
 vanishes at $X$ for all but finitely many $n$.
The algebra $\mathcal A$ is called $A$-standard whenever each element
of $\mathcal A$ is $A$-standard.
Assume that $\mathcal A$ is $A$-standard.
We will show that $S$ and $S'$ act on $\mathcal A$
as an automorphism, and these automorphisms are inverses.
Also, we will show that the algebra 
$\mathcal O_q$ is $A$-standard and
$S=L$, $S'=L^{-1}$.

\section{Some identities for the quantum adjoint}

As we work towards Theorem
\ref{conj:T},
 our first goal is to establish some identities
for the quantum adjoint, that apply to any
$\mathbb F$-algebra. Let $\mathcal A$ denote an $\mathbb F$-algebra,
and fix $A \in \mathcal A$.
Recall the sums $S$, $S'$ from
(\ref{eq:conj1S}),
(\ref{eq:conj2S}).
 We will be discussing the terms in
 these sums.
To simplify this discussion we
  introduce 
a ``balanced'' version of $\rm ad$, called $\rm bad$.
Let $\mathbb Z^+$ denote the set of positive integers.

\begin{definition}
\label{def:bad}
\rm
Define
\begin{eqnarray*}
{\rm bad}_0 A = \frac{  {\rm ad}\,A}{q-q^{-1}}
\end{eqnarray*}
and 
\begin{eqnarray*}
{\rm bad}_n A = \frac{(q^{2n}-q^{-2n})^2 I + ({\rm ad}_n A)
({\rm ad}_{-n} A)}{(q^{2n}-q^{-2n})(q^{2n+1}-q^{-2n-1})}
\qquad \qquad n\in \mathbb Z^+.
\end{eqnarray*}
Further define
\begin{eqnarray}
\label{eq:bad}
({\rm bad}\,A)_n = \prod_{i=0}^{n-1} {\rm bad}_i A 
\qquad \qquad n\in \mathbb N.
\end{eqnarray}
We interpret 
$({\rm bad}\,A)_0 = I$.
\end{definition}

\begin{definition}
\label{def:Tn}
\rm
Define
$S_0=I$ and
\begin{eqnarray*}
S_n = \frac{
({\rm bad}\, A)_n
 \,{\rm ad}_{n} A}{q^{2n}-q^{-2n}} 
\qquad \qquad 
n\in \mathbb Z^+.
\end{eqnarray*}
Further define
$S'_0=I$ and
\begin{eqnarray*}
S'_n = \frac{
({\rm bad}\, A)_n
 \,{\rm ad}_{-n} A}{q^{2n}-q^{-2n}} 
\qquad \qquad 
n\in \mathbb Z^+.
\end{eqnarray*}
\end{definition}

\begin{lemma} 
\label{def:fs}
In the above notation the sums
{\rm (\ref{eq:conj1S})},
{\rm (\ref{eq:conj2S})} become
\begin{eqnarray*}
S = \sum_{n \in \mathbb N} S_n,
\qquad \qquad 
S' = \sum_{n\in \mathbb N} S'_n.
\end{eqnarray*}
\end{lemma}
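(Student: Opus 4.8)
The plan is to show that the two expressions for $S$ and $S'$ given in the displays (\ref{eq:conj1S}), (\ref{eq:conj2S}) are literally equal, term by term, to the sums $\sum_{n \in \mathbb N} S_n$ and $\sum_{n \in \mathbb N} S'_n$ obtained from Definitions \ref{def:bad} and \ref{def:Tn}. This is purely a matter of unwinding notation, so the proof should be short and essentially a verification that the abbreviations introduced in Definition \ref{def:bad} reproduce the factors appearing in the original sums.

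First I would handle the $n=0$ term. In both $S$ and $S'$ the summation runs from $n=1$, while the leading $I$ is written separately; since $S_0 = S'_0 = I$ by Definition \ref{def:Tn}, the leading $I$ is exactly the $n=0$ contribution, and folding it into the sum replaces ``$I + \sum_{n=1}^\infty$'' by ``$\sum_{n \in \mathbb N}$''. Next I would treat a fixed $n \in \mathbb Z^+$ and compare the $n$-th summand. On the one hand, the summand in (\ref{eq:conj1S}) is
\begin{eqnarray*}
\frac{{\rm ad}\,A}{q-q^{-1}}
\Biggl(\prod_{r=1}^{n-1} \frac{(q^{2r}-q^{-2r})^2 I +
({\rm ad}_r A )({\rm ad}_{-r}A)}{(q^{2r}-q^{-2r})(q^{2r+1}-q^{-2r-1})}\Biggr)
\frac{{\rm ad}_n A}{q^{2n}-q^{-2n}}.
\end{eqnarray*}
On the other hand, by Definition \ref{def:bad} we have ${\rm bad}_0 A = ({\rm ad}\,A)/(q-q^{-1})$ and, for $1 \le r \le n-1$, the factor ${\rm bad}_r A$ is precisely the $r$-th factor in the product above. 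Hence by (\ref{eq:bad}),
\begin{eqnarray*}
({\rm bad}\,A)_n = \prod_{i=0}^{n-1} {\rm bad}_i A
= \frac{{\rm ad}\,A}{q-q^{-1}}
\prod_{r=1}^{n-1} \frac{(q^{2r}-q^{-2r})^2 I +
({\rm ad}_r A )({\rm ad}_{-r}A)}{(q^{2r}-q^{-2r})(q^{2r+1}-q^{-2r-1})}.
\end{eqnarray*}
The only point to notice is that the operators ${\rm bad}_i A$ for distinct $i$ need not be checked for a particular order of multiplication, because all the quantum adjoints ${\rm ad}_s A$ mutually commute, as recorded in the paragraph preceding Theorem \ref{conj:T}; thus the product $\prod_{i=0}^{n-1}$ in (\ref{eq:bad}) is unambiguous and matches the bracketed product in the original sum regardless of how one lists the factors.

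Multiplying the displayed identity for $({\rm bad}\,A)_n$ on the right by $({\rm ad}_n A)/(q^{2n}-q^{-2n})$ and comparing with Definition \ref{def:Tn} yields $S_n$, so the $n$-th summand of (\ref{eq:conj1S}) equals $S_n$; summing over $n \in \mathbb N$ gives $S = \sum_{n \in \mathbb N} S_n$. The argument for $S'$ is identical, the sole change being that the trailing factor is ${\rm ad}_{-n} A$ in place of ${\rm ad}_n A$, which reproduces $S'_n$. I do not expect any genuine obstacle here: the lemma is a bookkeeping statement whose entire content is that Definitions \ref{def:bad} and \ref{def:Tn} were set up to abbreviate the summands of (\ref{eq:conj1S}), (\ref{eq:conj2S}). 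The one place demanding a word of care is the commutativity remark ensuring the product notation in (\ref{eq:bad}) is well defined; once that is noted, the verification is immediate.
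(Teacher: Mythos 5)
Your proof is correct and matches the paper's treatment: the paper states this lemma without any proof, regarding it as immediate from Definitions \ref{def:bad} and \ref{def:Tn}, and your term-by-term identification (the leading $I$ as the $n=0$ term, and the parenthetical expression times ${\rm ad}_{\pm n}A/(q^{2n}-q^{-2n})$ as $S_n$, $S'_n$) is exactly that unwinding. Your extra remark that the commuting of the quantum adjoints makes the product in (\ref{eq:bad}) unambiguous is a sensible precaution, though not something the paper pauses over.
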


\noindent Our next goal is to prove Proposition
\ref{prop:TTp} below.
To this end we give some identities that hold in $\mathcal A$.

\begin{lemma}
\label{lem:plus}
For $i \in \mathbb Z$,
\begin{eqnarray*}
 {\rm ad}_{i} A + 
 {\rm ad}_{-i} A  = (q^i+q^{-i}) 
 \,{\rm ad}\, A.
\end{eqnarray*}
\end{lemma}
\begin{proof} Routine.
\end{proof}

\begin{lemma} 
\label{lem:SplusSp}
For $i\in \mathbb Z^+$,
\begin{eqnarray*}
S_i + S'_i = \frac{
 ({\rm bad}\,A)_{i} \, {\rm ad}\,A}{q^i-q^{-i}}.
 \end{eqnarray*}
 \end{lemma}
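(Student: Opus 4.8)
The plan is to prove Lemma~\ref{lem:SplusSp} by a direct computation, reducing the statement to an identity about the quantum adjoints via Lemma~\ref{lem:plus}. From Definition~\ref{def:Tn}, I can write
\begin{eqnarray*}
S_i + S'_i = \frac{({\rm bad}\,A)_i\,({\rm ad}_i A + {\rm ad}_{-i} A)}{q^{2i}-q^{-2i}},
\end{eqnarray*}
where I have factored out the common left multiplier $({\rm bad}\,A)_i$ and the common scalar $1/(q^{2i}-q^{-2i})$. The numerator now contains the symmetric combination ${\rm ad}_i A + {\rm ad}_{-i} A$, which is exactly what Lemma~\ref{lem:plus} evaluates.

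Next I would substitute Lemma~\ref{lem:plus} with the index $i$ in place of the generic index, giving ${\rm ad}_i A + {\rm ad}_{-i} A = (q^i+q^{-i})\,{\rm ad}\,A$. This turns the expression into
\begin{eqnarray*}
S_i + S'_i = \frac{(q^i+q^{-i})\,({\rm bad}\,A)_i\,{\rm ad}\,A}{q^{2i}-q^{-2i}}.
\end{eqnarray*}
The remaining task is purely arithmetic on the scalar prefactor: I would use the factorization $q^{2i}-q^{-2i} = (q^i+q^{-i})(q^i-q^{-i})$, so that the factor $q^i+q^{-i}$ cancels and I am left with $1/(q^i-q^{-i})$, yielding the claimed formula. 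Here I would note that $q^i - q^{-i} \neq 0$ since $q$ is not a root of unity, so the division is legitimate.

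There is really no substantial obstacle in this argument; it is a bookkeeping verification once the symmetric combination is recognized. The only point requiring minor care is the order of the factors: ${\rm ad}\,A$ appears to the right of $({\rm bad}\,A)_i$ in all three terms, so factoring it out on the right is valid because $({\rm ad}_i A){\rm bad}\,A$-type rearrangements are never needed---the multiplier $({\rm bad}\,A)_i$ sits on the left throughout, and the scalar $(q^i+q^{-i})$ commutes freely with everything. Thus the proof is a one-line chain of equalities invoking Lemma~\ref{lem:plus} and the difference-of-squares factorization, and I would present it as such.
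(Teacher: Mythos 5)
Your proof is correct and is precisely the paper's own argument: the paper's proof reads ``Use Definition~\ref{def:Tn} and Lemma~\ref{lem:plus},'' and your computation---distributing the common left factor $({\rm bad}\,A)_i$, applying Lemma~\ref{lem:plus}, and cancelling $q^i+q^{-i}$ against $q^{2i}-q^{-2i}=(q^i+q^{-i})(q^i-q^{-i})$---is exactly the intended bookkeeping, spelled out. No gaps; nothing further needed.
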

\begin{proof} Use Definition
\ref{def:Tn} and
Lemma
\ref{lem:plus}.
\end{proof}

\begin{lemma} 
\label{lem:adad}
For $i \in \mathbb Z^+$,
\begin{eqnarray*}
\frac{
 ({\rm ad}_{i} A)
 ({\rm ad}_{-i} A)
 }
 {(q^{2i}-q^{-2i})^2}
 +
  I
 =
 \frac{q^{2i+1}-q^{-2i-1}}{q^{2i}-q^{-2i}}\,
 {\rm bad}_{i} A.
 \end{eqnarray*}
\end{lemma}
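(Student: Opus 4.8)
The plan is to recognize this lemma as a direct algebraic rearrangement of the defining formula for ${\rm bad}_i A$ given in Definition \ref{def:bad}. Nothing about the algebra $\mathcal A$ or the structure of the adjoint maps is needed beyond that definition; the statement is purely a scalar manipulation applied to the operator identity defining ${\rm bad}_i A$.

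First I would recall that, for $i\in\mathbb Z^+$,
\begin{eqnarray*}
{\rm bad}_{i} A = \frac{(q^{2i}-q^{-2i})^2 I + ({\rm ad}_i A)({\rm ad}_{-i} A)}{(q^{2i}-q^{-2i})(q^{2i+1}-q^{-2i-1})}.
\end{eqnarray*}
Next I would multiply both sides by the scalar $\frac{q^{2i+1}-q^{-2i-1}}{q^{2i}-q^{-2i}}$. On the right-hand side the factor $q^{2i+1}-q^{-2i-1}$ cancels against the corresponding factor in the denominator, so the product becomes
\begin{eqnarray*}
\frac{(q^{2i}-q^{-2i})^2 I + ({\rm ad}_i A)({\rm ad}_{-i} A)}{(q^{2i}-q^{-2i})^2}.
\end{eqnarray*}
Finally I would split this single fraction into the sum of $\frac{(q^{2i}-q^{-2i})^2 I}{(q^{2i}-q^{-2i})^2}=I$ and $\frac{({\rm ad}_i A)({\rm ad}_{-i} A)}{(q^{2i}-q^{-2i})^2}$, which is precisely the left-hand side of the asserted identity, completing the proof.

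The only point requiring care is the legitimacy of dividing by the scalars $q^{2i}-q^{-2i}$ and $q^{2i+1}-q^{-2i-1}$; these are nonzero for every $i\in\mathbb Z^+$ because $q$ is not a root of unity, so each cancellation is valid. Beyond this observation there is no genuine obstacle: the lemma is an immediate consequence of Definition \ref{def:bad}, and I expect the entire argument to reduce to a one-line computation.
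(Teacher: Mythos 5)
Your proposal is correct and matches the paper's intent exactly: the paper's proof of this lemma is simply ``Use Definition \ref{def:bad},'' and your argument is the explicit version of that one-line rearrangement, multiplying the definition of ${\rm bad}_i A$ by the scalar $\frac{q^{2i+1}-q^{-2i-1}}{q^{2i}-q^{-2i}}$ and splitting the fraction. Your remark that the denominators are nonzero because $q$ is not a root of unity is a sound, if routine, point of care.
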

\begin{proof} Use
Definition
\ref{def:bad}.
\end{proof}

\begin{lemma} 
\label{lem:SS}
For $i \in \mathbb Z^+$,
\begin{eqnarray*}
S_i S'_i 
 + 
 ({\rm bad}\,A)^2_{i}
 = \frac{q^{2i+1}-q^{-2i-1}}{q^{2i}-q^{-2i}}\,
 ({\rm bad}\,A)_{i}
 ({\rm bad}\,A)_{i+1}.
 \end{eqnarray*}
 \end{lemma}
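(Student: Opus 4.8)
The plan is to reduce the identity to a direct computation in the commutative subalgebra of operators generated by the quantum adjoints. First I would expand the product $S_i S'_i$ using Definition \ref{def:Tn}, writing
\begin{eqnarray*}
S_i S'_i = \frac{({\rm bad}\,A)_i\,{\rm ad}_i A}{q^{2i}-q^{-2i}}\cdot \frac{({\rm bad}\,A)_i\,{\rm ad}_{-i}A}{q^{2i}-q^{-2i}}.
\end{eqnarray*}
The crucial structural observation, which I would invoke explicitly, is that all the operators in sight pairwise commute: by the remark preceding Theorem \ref{conj:T} the maps ${\rm ad}_r A$, ${\rm ad}_s A$ commute for all $r,s$, and since each ${\rm bad}_n A$ is a polynomial in ${\rm ad}_n A$ and ${\rm ad}_{-n}A$ (and ${\rm bad}_0 A$ a scalar multiple of ${\rm ad}\,A$), every ${\rm bad}_n A$ commutes with every ${\rm ad}_r A$ and with every other ${\rm bad}_m A$. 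This lets me freely regroup the four factors above, collecting $({\rm bad}\,A)_i^2$ on one side and the product $({\rm ad}_i A)({\rm ad}_{-i}A)$ on the other, to obtain
\begin{eqnarray*}
S_i S'_i = ({\rm bad}\,A)_i^2\,\frac{({\rm ad}_i A)({\rm ad}_{-i}A)}{(q^{2i}-q^{-2i})^2}.
\end{eqnarray*}

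Next I would apply Lemma \ref{lem:adad}, which rewrites the scalar-normalized adjoint product precisely as
\begin{eqnarray*}
\frac{({\rm ad}_i A)({\rm ad}_{-i}A)}{(q^{2i}-q^{-2i})^2} = \frac{q^{2i+1}-q^{-2i-1}}{q^{2i}-q^{-2i}}\,{\rm bad}_i A - I.
\end{eqnarray*}
Substituting this and distributing $({\rm bad}\,A)_i^2$ across the two terms yields
\begin{eqnarray*}
S_i S'_i = \frac{q^{2i+1}-q^{-2i-1}}{q^{2i}-q^{-2i}}\,({\rm bad}\,A)_i^2\,{\rm bad}_i A - ({\rm bad}\,A)_i^2.
\end{eqnarray*}
The final step is to recognize the telescoping factorization built into Definition \ref{def:bad}, namely $({\rm bad}\,A)_{i+1} = ({\rm bad}\,A)_i\,{\rm bad}_i A$, so that $({\rm bad}\,A)_i^2\,{\rm bad}_i A = ({\rm bad}\,A)_i({\rm bad}\,A)_{i+1}$. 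Moving the trailing $({\rm bad}\,A)_i^2$ to the left-hand side then gives exactly the claimed identity.

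The computation is entirely routine once the commutativity is in place, so there is no genuine analytic or combinatorial obstacle; the only point demanding care is the bookkeeping of the regrouping in the first display, where one must be certain that commuting $({\rm bad}\,A)_i$ past ${\rm ad}_i A$ is legitimate. I would therefore foreground the commutativity remark as the key justification, after which the proof is a short chain of substitutions using Lemma \ref{lem:adad} and the product rule of Definition \ref{def:bad}.
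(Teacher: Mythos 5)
Your proof is correct and is exactly the argument the paper intends: its one-line proof (``Use Definition \ref{def:Tn} and Lemma \ref{lem:adad}'') amounts to the same expansion of $S_iS'_i$, the same regrouping via the commutativity of the quantum adjoints, and the same substitution of Lemma \ref{lem:adad} together with $({\rm bad}\,A)_{i+1}=({\rm bad}\,A)_i\,{\rm bad}_i A$. You have simply written out the bookkeeping that the paper leaves to the reader.
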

\begin{proof} Use Definition
\ref{def:Tn}
and
Lemma
\ref{lem:adad}.
\end{proof}

\begin{lemma} 
\label{lem:pmAd}
For $i,j \in \mathbb Z^+$,
\begin{eqnarray*}
&&
\frac{
 ({\rm ad}_{i} A)
 ({\rm ad}_{-j} A)+
 ({\rm ad}_{-i} A)
 ({\rm ad}_{j} A)}
 {(q^{2i}-q^{-2i})(q^{2j}-q^{-2j})}
 +
 (q^{i-j}+ q^{j-i}) I
\\
&&\qquad \qquad 
 =\quad  
 \frac{q^{2i+1}-q^{-2i-1}}{q^{i+j}-q^{-i-j}}\,
 {\rm bad}_{i} A
 +
 \frac{q^{2j+1}-q^{-2j-1}}{q^{i+j}-q^{-i-j}}\,
 {\rm bad}_{j} A.
 \end{eqnarray*}

\end{lemma}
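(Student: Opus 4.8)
The plan is to observe that every operator occurring in the statement is a polynomial in the two \emph{commuting} operators $\ell_A\colon X\mapsto AX$ and $\rho_A\colon X\mapsto XA$, and thereby to reduce the asserted operator identity to a pair of scalar $q$-identities. Indeed $\ell_A$ and $\rho_A$ commute, since $(AX)A=A(XA)$, and for each $r\in\mathbb Z$ we have ${\rm ad}_r A=q^r\ell_A-q^{-r}\rho_A$. Consequently all the maps in the lemma lie in the commutative subalgebra of ${\rm End}(\mathcal A)$ generated by $\ell_A,\rho_A,I$, so it suffices to verify the identity as a relation among these three commuting symbols, and in fact to compare the coefficients of $\ell_A^2+\rho_A^2$, of $\ell_A\rho_A$, and of $I$.

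First I would expand the left-hand side. Using ${\rm ad}_r A=q^r\ell_A-q^{-r}\rho_A$ and the commutativity of $\ell_A,\rho_A$, direct multiplication gives
\begin{eqnarray*}
({\rm ad}_i A)({\rm ad}_{-j}A)+({\rm ad}_{-i}A)({\rm ad}_j A)
=(q^{i-j}+q^{j-i})(\ell_A^2+\rho_A^2)-2(q^{i+j}+q^{-i-j})\ell_A\rho_A.
\end{eqnarray*}
Next I would simplify the right-hand side by first applying Lemma \ref{lem:adad} to eliminate the ${\rm bad}$ terms: that lemma lets me replace $\tfrac{q^{2i+1}-q^{-2i-1}}{q^{2i}-q^{-2i}}\,{\rm bad}_i A$ by $\tfrac{({\rm ad}_i A)({\rm ad}_{-i}A)}{(q^{2i}-q^{-2i})^2}+I$, and similarly for $j$. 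After clearing the common factor $q^{i+j}-q^{-i-j}$ the right-hand side becomes
\begin{eqnarray*}
\frac{({\rm ad}_i A)({\rm ad}_{-i}A)}{(q^{i+j}-q^{-i-j})(q^{2i}-q^{-2i})}
+\frac{({\rm ad}_j A)({\rm ad}_{-j}A)}{(q^{i+j}-q^{-i-j})(q^{2j}-q^{-2j})}
+\frac{(q^{2i}-q^{-2i})+(q^{2j}-q^{-2j})}{q^{i+j}-q^{-i-j}}\,I,
\end{eqnarray*}
and expanding the two products via $\ell_A,\rho_A$ leaves a finite coefficient comparison.

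Matching the coefficients then rests on two scalar $q$-identities, which are the real content. The first,
\begin{eqnarray*}
(q^{2i}-q^{-2i})+(q^{2j}-q^{-2j})=(q^{i+j}-q^{-i-j})(q^{i-j}+q^{j-i}),
\end{eqnarray*}
simultaneously produces the constant $(q^{i-j}+q^{j-i})I$ and matches the coefficient of $\ell_A^2+\rho_A^2$. The second,
\begin{eqnarray*}
(q^{2i}+q^{-2i})(q^{2j}-q^{-2j})+(q^{2j}+q^{-2j})(q^{2i}-q^{-2i})
=2(q^{i+j}-q^{-i-j})(q^{i+j}+q^{-i-j}),
\end{eqnarray*}
matches the coefficient of $\ell_A\rho_A$. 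I expect spotting and verifying the second identity (together with recognizing the factorization in the first) to be the only nonroutine step; once both are in hand the coefficient comparison is immediate, and the two sides agree as operators, completing the proof.
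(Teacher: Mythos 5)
Your proposal is correct and amounts to the same approach as the paper: the paper's proof is simply ``Routine using Definition \ref{def:bad},'' and your reduction to the commuting operators $\ell_A,\rho_A$ followed by coefficient comparison (together with the two scalar $q$-identities, both of which check out) is exactly a careful, well-organized execution of that routine expansion. The intermediate use of Lemma \ref{lem:adad} is only a repackaging of Definition \ref{def:bad}, so nothing in your argument departs from the paper's intended route.
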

\begin{proof} Routine using
Definition
\ref{def:bad}.
\end{proof}

\begin{lemma} 
\label{lem:pmSS}
For $i,j \in \mathbb Z^+$,
\begin{eqnarray*}
&&
S_i S'_j + S'_i S_j 
 + (q^{i-j}+ q^{j-i})
 ({\rm bad}\,A)_{i}
 ({\rm bad}\,A)_{j}
\\
&&
\qquad =\quad  
 \frac{q^{2i+1}-q^{-2i-1}}{q^{i+j}-q^{-i-j}}\,
 ({\rm bad}\,A)_{i+1}
 ({\rm bad}\,A)_{j}
 + \frac{q^{2j+1}-q^{-2j-1}}{q^{i+j}-q^{-i-j}}\,
 ({\rm bad}\,A)_{i}
 ({\rm bad}\,A)_{j+1}.
 \end{eqnarray*}
 \end{lemma}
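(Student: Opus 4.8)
The plan is to prove Lemma~\ref{lem:pmSS} by expanding the products $S_iS'_j$ and $S'_iS_j$ via Definition~\ref{def:Tn}, and then invoking Lemma~\ref{lem:pmAd} to reduce the combination to a sum of two $\mathrm{bad}$-terms. First I would write, using Definition~\ref{def:Tn},
\begin{eqnarray*}
S_iS'_j = \frac{(\mathrm{bad}\,A)_i\,(\mathrm{ad}_iA)\,(\mathrm{bad}\,A)_j\,(\mathrm{ad}_{-j}A)}{(q^{2i}-q^{-2i})(q^{2j}-q^{-2j})},
\end{eqnarray*}
and similarly for $S'_iS_j$. The key structural observation is that all the factors commute: by the remark in Section~2 the operators $\mathrm{ad}_rA$ commute with one another for all $r$, hence each $\mathrm{bad}_iA$ (being built from $\mathrm{ad}_iA$ and $\mathrm{ad}_{-i}A$) commutes with every $\mathrm{ad}_sA$ and with every $\mathrm{bad}_jA$. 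This lets me freely factor out the common prefactor $(\mathrm{bad}\,A)_i(\mathrm{bad}\,A)_j$ and collect $S_iS'_j+S'_iS_j$ as $(\mathrm{bad}\,A)_i(\mathrm{bad}\,A)_j$ times the left-hand-side combination of Lemma~\ref{lem:pmAd}.

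The central step is this factorization. Once the commutativity is in hand, I would rewrite
\begin{eqnarray*}
S_iS'_j + S'_iS_j = (\mathrm{bad}\,A)_i\,(\mathrm{bad}\,A)_j\,\frac{(\mathrm{ad}_iA)(\mathrm{ad}_{-j}A)+(\mathrm{ad}_{-i}A)(\mathrm{ad}_jA)}{(q^{2i}-q^{-2i})(q^{2j}-q^{-2j})}.
\end{eqnarray*}
Now Lemma~\ref{lem:pmAd} expresses the fraction on the right as
\begin{eqnarray*}
\frac{q^{2i+1}-q^{-2i-1}}{q^{i+j}-q^{-i-j}}\,\mathrm{bad}_iA + \frac{q^{2j+1}-q^{-2j-1}}{q^{i+j}-q^{-i-j}}\,\mathrm{bad}_jA - (q^{i-j}+q^{j-i})I.
\end{eqnarray*}
Substituting this in and distributing $(\mathrm{bad}\,A)_i(\mathrm{bad}\,A)_j$ across the three terms, I would use the defining relation $(\mathrm{bad}\,A)_{n+1}=(\mathrm{bad}\,A)_n\,\mathrm{bad}_nA$ from \eqref{eq:bad} to recognize $(\mathrm{bad}\,A)_i\,\mathrm{bad}_iA=(\mathrm{bad}\,A)_{i+1}$ and $(\mathrm{bad}\,A)_j\,\mathrm{bad}_jA=(\mathrm{bad}\,A)_{j+1}$. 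This converts the first two terms into exactly the right-hand side of the lemma, while the third term $-(q^{i-j}+q^{j-i})(\mathrm{bad}\,A)_i(\mathrm{bad}\,A)_j$ moves to the left side to match the stated identity.

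I expect the only real obstacle to be bookkeeping rather than conceptual difficulty: I must be careful that $\mathrm{bad}_iA$ commutes past $(\mathrm{bad}\,A)_j$ so that the product $(\mathrm{bad}\,A)_i\,\mathrm{bad}_iA\,(\mathrm{bad}\,A)_j$ really equals $(\mathrm{bad}\,A)_{i+1}(\mathrm{bad}\,A)_j$, and symmetrically for the $j$-term. Since $\mathrm{bad}_iA$ is a polynomial in the commuting operators $\mathrm{ad}_{\pm i}A$, and these commute with all $\mathrm{ad}_{\pm r}A$ appearing in $(\mathrm{bad}\,A)_j$, this reordering is justified. Everything else is a direct substitution and the collection of like terms, so the proof reduces to Lemma~\ref{lem:pmAd} together with the telescoping identity \eqref{eq:bad} for $(\mathrm{bad}\,A)_n$.
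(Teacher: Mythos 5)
Your proof is correct and follows essentially the same route as the paper, whose proof of Lemma~\ref{lem:pmSS} is simply ``Use Definition~\ref{def:Tn} and Lemma~\ref{lem:pmAd}.'' You have merely made explicit the details the paper leaves to the reader: the commutativity of all operators built from left and right multiplication by $A$, the factorization pulling out $({\rm bad}\,A)_i({\rm bad}\,A)_j$, and the telescoping identity $({\rm bad}\,A)_n\,{\rm bad}_nA=({\rm bad}\,A)_{n+1}$ from (\ref{eq:bad}).
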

\begin{proof} Use Definition
\ref{def:Tn}
and
Lemma
\ref{lem:pmAd}.
\end{proof}

\begin{proposition}
\label{prop:TTp}
For $n \in \mathbb N$,
\begin{eqnarray*}
\Biggl(\sum_{i=0}^n S_i\Biggr)
\Biggl(\sum_{j=0}^n S'_j\Biggr) =
I+ ({\rm bad}\,A)_{n+1} 
\Biggl(
\sum_{r=0}^{n-1} 
\frac{q^{2n+1}-q^{-2n-1}}{q^{n+r+1}-q^{-n-r-1}}
({\rm bad}\,A)_{r+1} 
\Biggr).
\end{eqnarray*}
\end{proposition}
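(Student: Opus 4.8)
The plan is to argue by induction on $n$. Write $P_n=\sum_{i=0}^n S_i$, $P'_n=\sum_{j=0}^n S'_j$, and $R_n=\sum_{r=0}^{n-1}\frac{q^{2n+1}-q^{-2n-1}}{q^{n+r+1}-q^{-n-r-1}}({\rm bad}\,A)_{r+1}$, so that the assertion reads $P_nP'_n=I+({\rm bad}\,A)_{n+1}R_n$. The base case $n=0$ is immediate: $P_0=P'_0=S_0=I$, and $R_0$ is an empty sum, so both sides equal $I$. Throughout I use the fact, recorded in the introduction, that the maps ${\rm ad}_rA$ all commute; hence the $S_i$, the $S'_j$, and the $({\rm bad}\,A)_k$ commute with one another, and I may reorder products freely.

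For the inductive step I would expand, after peeling the $i=0$ contribution off the cross terms,
$$P_{n+1}P'_{n+1}=P_nP'_n+\bigl(S_{n+1}+S'_{n+1}\bigr)+\sum_{i=1}^n\bigl(S_iS'_{n+1}+S'_iS_{n+1}\bigr)+S_{n+1}S'_{n+1}.$$
Here $P_nP'_n$ is replaced by the inductive hypothesis; the term $S_{n+1}+S'_{n+1}$ is rewritten by Lemma \ref{lem:SplusSp} together with $({\rm bad}\,A)_1={\rm ad}\,A/(q-q^{-1})$; each summand with $1\le i\le n$ is rewritten by Lemma \ref{lem:pmSS} applied to the pair $(i,n+1)$; and the diagonal term $S_{n+1}S'_{n+1}$ is rewritten by Lemma \ref{lem:SS} with $i=n+1$.

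Using $({\rm bad}\,A)_{n+2}=({\rm bad}\,A)_{n+1}\,{\rm bad}_{n+1}A$, every summand produced this way is a scalar times either $({\rm bad}\,A)_{n+2}({\rm bad}\,A)_s$ or $({\rm bad}\,A)_{n+1}({\rm bad}\,A)_s$. I expect the $({\rm bad}\,A)_{n+2}(\cdot)$ pieces --- the second summand of each Lemma \ref{lem:pmSS} expansion, giving $s=1,\dots,n$, together with the leading term of Lemma \ref{lem:SS}, giving $s=n+1$ --- to assemble exactly into $({\rm bad}\,A)_{n+2}R_{n+1}$, with coefficients matching term by term; the lone $I$ survives from the inductive hypothesis. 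This would leave only the $({\rm bad}\,A)_{n+1}({\rm bad}\,A)_s$ terms to dispose of.

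The main obstacle is to check that these latter terms cancel completely. They collect contributions from $({\rm bad}\,A)_{n+1}R_n$, from $S_{n+1}+S'_{n+1}$, from the first summand of each Lemma \ref{lem:pmSS} expansion, and from $-({\rm bad}\,A)_{n+1}^2$ in Lemma \ref{lem:SS}. I would fix $s$ and show the total scalar coefficient of $({\rm bad}\,A)_{n+1}({\rm bad}\,A)_s$ vanishes; for $2\le s\le n$ this reduces to the single identity
$$q^{2n+1}-q^{-2n-1}+q^{2s-1}-q^{-2s+1}=\bigl(q^{n+s}-q^{-n-s}\bigr)\bigl(q^{s-n-1}+q^{n+1-s}\bigr),$$
verified by expanding the right-hand side. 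The boundary cases need slightly separate bookkeeping: at $s=1$ the contribution of $S_{n+1}+S'_{n+1}$ plays the role of the absent first-summand term, and at $s=n+1$ the term $-({\rm bad}\,A)_{n+1}^2$ plays that role, but each again collapses via the same identity. Combining the surviving $I$, the full cancellation, and the $({\rm bad}\,A)_{n+2}R_{n+1}$ block gives the claim for $n+1$, closing the induction.
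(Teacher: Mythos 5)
Your proof is correct and takes essentially the same approach as the paper: induction on $n$, expanding the difference of consecutive partial products and eliminating the cross terms $S_iS'_{n+1}+S'_iS_{n+1}$, the diagonal term, and the $i=0$ terms via Lemmas \ref{lem:SplusSp}, \ref{lem:SS}, \ref{lem:pmSS}. The scalar identity you isolate, together with your boundary-case bookkeeping at $s=1$ and $s=n+1$, is exactly the ``routine simplification'' that the paper's proof leaves to the reader.
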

\begin{proof} 
The proof is by induction on $n$. 
Let $D_n$ denote the left-hand side minus the right-hand side.
We show that $D_n=0$. 
 One routinely obtains
$D_0=0$, so assume $n\geq 1$.
To show that $D_n=0$, it suffices to show
 that $D_n - D_{n-1}=0$. 
In the expression $D_n - D_{n-1}$, eliminate
the terms $\lbrace S_i S'_n\rbrace_{i=0}^{n-1}$, 
$\lbrace S_n S'_j\rbrace_{j=0}^{n-1}$, $S_n S'_n$
using Lemmas
\ref{lem:SplusSp},
\ref{lem:SS},
\ref{lem:pmSS}. After a routine simplification we
obtain 
 $D_n - D_{n-1}=0$, so
 $D_n=0$.
\end{proof}


\noindent Our next goal is to prove Proposition
\ref{prop:TXY}
below. To this end we give some more identities
that hold in $\mathcal A$.

\begin{lemma}
\label{lem:1a}
For distinct  $i,j \in \mathbb Z$ and $X,Y \in \mathcal A$,
\begin{eqnarray*}
XA = \frac{q^{j} {\rm ad}_i A - q^i {\rm ad}_{j} A}{q^{i-j}-q^{j-i}} \,(X),
\qquad \qquad 
AY = \frac{q^{-j} {\rm ad}_i A - q^{-i} {\rm ad}_j A}{q^{i-j}-q^{j-i}}
\,(Y).
\end{eqnarray*}
\end{lemma}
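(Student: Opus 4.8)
The plan is to verify both displayed identities by a direct expansion, noting that each amounts to solving a $2\times 2$ linear system. Recall that for any $Z\in\mathcal A$ and $r\in\mathbb Z$ the quantum adjoint acts by ${\rm ad}_r A\,(Z)=q^r AZ-q^{-r}ZA$. Applying this with $r=i$ and $r=j$ shows that ${\rm ad}_i A\,(X)$ and ${\rm ad}_j A\,(X)$ are the two $\mathbb F$-linear combinations
\begin{eqnarray*}
{\rm ad}_i A\,(X)=q^i AX-q^{-i}XA,
\qquad\qquad
{\rm ad}_j A\,(X)=q^j AX-q^{-j}XA
\end{eqnarray*}
of the ``unknowns'' $AX$ and $XA$. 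The coefficient matrix $\left(\begin{smallmatrix} q^i & -q^{-i} \\ q^j & -q^{-j}\end{smallmatrix}\right)$ has determinant $q^{j-i}-q^{i-j}$, which is nonzero precisely because $i\neq j$ and $q$ is not a root of unity; hence the system can be inverted to express $AX$ and $XA$ individually in terms of ${\rm ad}_i A\,(X)$ and ${\rm ad}_j A\,(X)$.

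Carrying this out, I would form the combination $q^j\,{\rm ad}_i A\,(X)-q^i\,{\rm ad}_j A\,(X)$ that isolates $XA$: the $AX$-terms cancel as $q^{i+j}-q^{i+j}=0$, while the surviving $XA$-terms sum to $(q^{i-j}-q^{j-i})XA$. Dividing by $q^{i-j}-q^{j-i}$ then gives the first identity. The second identity is obtained in the same way by instead forming $q^{-j}\,{\rm ad}_i A\,(Y)-q^{-i}\,{\rm ad}_j A\,(Y)$, which cancels the $YA$-terms and leaves $(q^{i-j}-q^{j-i})AY$; division yields $AY$. In effect, these two identities are the two instances of Cramer's rule for the system above.

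There is no real obstacle here: the argument is purely computational, and the only place the hypotheses are used is in guaranteeing that the denominator $q^{i-j}-q^{j-i}$ does not vanish, so that the division is legitimate. This is exactly where the distinctness of $i$ and $j$, together with the standing assumption that $q$ is not a root of unity, is needed.
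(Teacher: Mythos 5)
Your proof is correct and is essentially the same argument the paper intends: the paper's proof is simply ``Routine,'' meaning exactly this direct expansion of $q^{j}\,{\rm ad}_i A\,(X)-q^{i}\,{\rm ad}_j A\,(X)$ and $q^{-j}\,{\rm ad}_i A\,(Y)-q^{-i}\,{\rm ad}_j A\,(Y)$, with distinctness of $i,j$ and $q$ not a root of unity guaranteeing the denominator $q^{i-j}-q^{j-i}$ is nonzero. Your Cramer's-rule framing is a clean way to organize the same computation, so nothing further is needed.
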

\begin{proof} Routine.
\end{proof}

\begin{lemma}
\label{lem:adbad}
For  $i \in \mathbb Z$ and $j \in \mathbb Z^+$ and
$X, Y \in \mathcal A$,
\begin{eqnarray*}
&&
{\rm ad}_i A\,
 ({\rm bad}\,A)_{j} (X) =
q^{i-j} (q^{2j}-q^{-2j})S_{j}(X)
+
q^{-j}(q^{i-j}-q^{j-i})
\Bigl(
({\rm bad}\,A)_{j}(X)
\Bigr) A,
\\
&&
{\rm ad}_i A\,
 ({\rm bad}\,A)_{j} (Y) =
q^{j-i} (q^{2j}-q^{-2j})S_j(Y)
+
q^{j}(q^{i-j}-q^{j-i})
A \Bigl(
({\rm bad}\,A)_{j}(Y)
\Bigr).
\end{eqnarray*}
\end{lemma}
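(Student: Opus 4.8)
The plan is to reduce both identities to one commutation observation and then to pure bookkeeping with powers of $q$. Abbreviate $Z = ({\rm bad}\,A)_{j}(X)$ for the first identity and $W = ({\rm bad}\,A)_{j}(Y)$ for the second. By the definition of the quantum adjoint, the left-hand sides are simply
\[
{\rm ad}_i A\,({\rm bad}\,A)_{j}(X) = {\rm ad}_i A(Z) = q^i A Z - q^{-i} Z A,
\qquad
{\rm ad}_i A\,({\rm bad}\,A)_{j}(Y) = {\rm ad}_i A(W) = q^i A W - q^{-i} W A,
\]
so the whole content is to express $AZ,ZA$ (resp.\ $AW,WA$) in terms of $S_j$ and one-sided multiplication by $A$.

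The key step is to rewrite $S_j$. Since every ${\rm ad}_r A$ commutes with every ${\rm ad}_s A$, and $({\rm bad}\,A)_{j}$ is by Definition \ref{def:bad} a polynomial in the commuting operators $\{{\rm ad}_r A\}$, the operator ${\rm ad}_j A$ commutes with $({\rm bad}\,A)_{j}$. Hence, from Definition \ref{def:Tn},
\[
S_j(X) = \frac{({\rm bad}\,A)_{j}\,{\rm ad}_j A}{q^{2j}-q^{-2j}}(X) = \frac{{\rm ad}_j A\,({\rm bad}\,A)_{j}}{q^{2j}-q^{-2j}}(X) = \frac{{\rm ad}_j A(Z)}{q^{2j}-q^{-2j}},
\]
so that ${\rm ad}_j A(Z) = q^j AZ - q^{-j} ZA = (q^{2j}-q^{-2j})S_j(X)$, and likewise ${\rm ad}_j A(W) = (q^{2j}-q^{-2j})S_j(Y)$ with $W,Y$ in place of $Z,X$.

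With this in hand I would finish the first identity by solving ${\rm ad}_j A(Z) = q^j AZ - q^{-j}ZA = (q^{2j}-q^{-2j})S_j(X)$ for $AZ$ and substituting into ${\rm ad}_i A(Z) = q^i AZ - q^{-i}ZA$. Collecting terms, the $S_j(X)$ contribution carries coefficient $q^{i-j}(q^{2j}-q^{-2j})$, exactly as claimed, while the two $ZA$ terms combine with coefficient $q^{i-2j}-q^{-i}$, which one checks equals $q^{-j}(q^{i-j}-q^{j-i})$, the asserted coefficient of $\bigl(({\rm bad}\,A)_{j}(X)\bigr)A$. For the second identity I proceed symmetrically, now solving ${\rm ad}_j A(W) = (q^{2j}-q^{-2j})S_j(Y)$ for $WA$ and substituting into ${\rm ad}_i A(W)$; the $AW$ terms then collapse to coefficient $q^i - q^{2j-i} = q^j(q^{i-j}-q^{j-i})$ and the $S_j(Y)$ term to $q^{j-i}(q^{2j}-q^{-2j})$, as required. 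Equivalently, one may feed the relations ${\rm ad}_j A(Z)=(q^{2j}-q^{-2j})S_j(X)$ and ${\rm ad}_j A(W)=(q^{2j}-q^{-2j})S_j(Y)$ into the two formulas of Lemma \ref{lem:1a} taken with indices $i,j$ (valid for $i\neq j$, the case $i=j$ being immediate), which delivers the same formulas after clearing denominators.

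I expect no serious obstacle. The only point requiring genuine care is the commutation claim that legitimizes replacing $({\rm bad}\,A)_{j}\,{\rm ad}_j A$ by ${\rm ad}_j A\,({\rm bad}\,A)_{j}$; once that identification of $S_j$ is in place, everything reduces to a routine manipulation of $q$-exponents, and the two asserted identities are mirror images of one another under interchanging left and right multiplication by $A$.
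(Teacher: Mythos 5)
Your proposal is correct and follows essentially the same route as the paper: the paper's proof is precisely ``Use Definition \ref{def:Tn} and Lemma \ref{lem:1a}'', i.e.\ identify ${\rm ad}_j A\,({\rm bad}\,A)_j$ with $(q^{2j}-q^{-2j})S_j$ via commutativity of the quantum adjoints and then eliminate the one-sided multiplications by $A$, which is exactly your calculation (your direct substitution even avoids the $i=j$ case split that invoking Lemma \ref{lem:1a} would require).
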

\begin{proof} Use
Definition \ref{def:Tn}
and 
Lemma 
\ref{lem:1a}.
\end{proof}

\begin{lemma}
\label{lem:adiSj}
For $i\in \mathbb Z$ and
 $j\in \mathbb N$ and
$X, Y \in \mathcal A$,
\begin{eqnarray*}
&&
{\rm ad}_i A\,
 S_{j} (X) =
q^{i+j} (q^{2j+1}-q^{-2j-1})
({\rm bad}\,A)_{j+1}(X)
-
q^{i+j}(q^{2j}-q^{-2j})
({\rm bad}\,A)_{j}(X)
\\
&& 
\qquad \qquad \qquad +
\;
q^{j}(q^{i+j}-q^{-i-j})
\Bigl(S_j(X)\Bigr) A,
\\
&&
{\rm ad}_i A\,
 S_{j} (Y) =
q^{-i-j} (q^{2j+1}-q^{-2j-1})
({\rm bad}\,A)_{j+1}(Y)
-
q^{-i-j}(q^{2j}-q^{-2j})
({\rm bad}\,A)_{j}(Y)
\\
&&
\qquad \qquad 
\qquad +\;
q^{-j}(q^{i+j}-q^{-i-j})
A \Bigl(S_j(Y)\Bigr).
\end{eqnarray*}
\end{lemma}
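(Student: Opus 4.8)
The plan is to prove both identities by reducing them to a single ``core'' computation, namely the evaluation of $({\rm ad}_{-j}A)\,S_j$, and then to extract that core from the earlier lemmas. The key structural observation is that every operator occurring here --- the quantum adjoints ${\rm ad}_r A$, the balanced versions ${\rm bad}_r A$, the products $({\rm bad}\,A)_n$, and $S_n$ --- is built from the two commuting operators $L_A\colon X\mapsto AX$ and $R_A\colon X\mapsto XA$ of left- and right-multiplication by $A$. In particular the quantum adjoints commute with one another (as noted in Section 2) and ${\rm ad}_{-j}A$ commutes with $({\rm bad}\,A)_j$; this is what lets me move factors freely.

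First I would dispose of the case $j=0$ directly: here $S_0=I$, so the left-hand side is ${\rm ad}_i A\,(X)=q^iAX-q^{-i}XA$, and using $({\rm bad}\,A)_1={\rm bad}_0 A=({\rm ad}\,A)/(q-q^{-1})$ one checks the right-hand side collapses to the same expression, the middle term vanishing because $q^{2\cdot 0}-q^{-2\cdot 0}=0$. So assume $j\in\mathbb Z^+$. The main step is then a rewriting of ${\rm ad}_i A$ obtained by solving Lemma \ref{lem:1a} (applied with the index pair $(i,-j)$) for ${\rm ad}_i A$: the first identity of Lemma \ref{lem:1a} gives the operator equation ${\rm ad}_i A=q^{i+j}\,{\rm ad}_{-j}A+q^{j}(q^{i+j}-q^{-i-j})\,R_A$, while the second gives ${\rm ad}_i A=q^{-i-j}\,{\rm ad}_{-j}A+q^{-j}(q^{i+j}-q^{-i-j})\,L_A$. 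Applying the first (resp.\ second) to $S_j(X)$ (resp.\ $S_j(Y)$), the $R_A$ (resp.\ $L_A$) term produces exactly the last summand $q^{j}(q^{i+j}-q^{-i-j})(S_j(X))A$ (resp.\ $q^{-j}(q^{i+j}-q^{-i-j})A(S_j(Y))$) of the asserted formula, and both formulas are thereby reduced to computing $({\rm ad}_{-j}A)\,S_j$ up to the scalar $q^{i+j}$ (resp.\ $q^{-i-j}$).

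To finish, I would evaluate this common core. Using Definition \ref{def:Tn} and commuting ${\rm ad}_{-j}A$ past $({\rm bad}\,A)_j$ gives $({\rm ad}_{-j}A)\,S_j=({\rm bad}\,A)_j\,({\rm ad}_j A)({\rm ad}_{-j}A)/(q^{2j}-q^{-2j})$. Lemma \ref{lem:adad} (with $i=j$) rewrites $({\rm ad}_j A)({\rm ad}_{-j}A)$ as $(q^{2j}-q^{-2j})(q^{2j+1}-q^{-2j-1})\,{\rm bad}_j A-(q^{2j}-q^{-2j})^2 I$, and combining this with the telescoping $({\rm bad}\,A)_j\,{\rm bad}_j A=({\rm bad}\,A)_{j+1}$ from Definition \ref{def:bad} yields $({\rm ad}_{-j}A)\,S_j=(q^{2j+1}-q^{-2j-1})({\rm bad}\,A)_{j+1}-(q^{2j}-q^{-2j})({\rm bad}\,A)_j$. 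Multiplying by $q^{i+j}$ (resp.\ $q^{-i-j}$) then produces the first two terms of the $X$ (resp.\ $Y$) identity, completing the proof. I expect the main obstacle to be purely bookkeeping: verifying the two rewritings of ${\rm ad}_i A$ with exactly the scalar coefficients demanded by the statement, so that the one-sided-multiplication term lands precisely on the stated last summand; once the coefficients $q^{i+j}$, $q^{j}(q^{i+j}-q^{-i-j})$ and their $Y$-analogues are pinned down, everything else is a direct substitution.
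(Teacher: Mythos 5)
Your proposal is correct and follows essentially the same route as the paper, whose proof is simply the instruction to combine Lemma \ref{lem:1a} (solved for ${\rm ad}_i A$ with index pair $(i,-j)$) with Definitions \ref{def:bad} and \ref{def:Tn}; your use of Lemma \ref{lem:adad} is just Definition \ref{def:bad} repackaged, and your commutativity observation and separate treatment of $j=0$ fill in details the paper leaves implicit.
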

\begin{proof} Use
Definitions
\ref{def:bad},
\ref{def:Tn}
and 
Lemma
\ref{lem:1a}.
\end{proof}

\begin{lemma}
\label{lem:1}
For $h,i,j \in \mathbb Z$ and  $X,Y \in \mathcal A$,
\begin{eqnarray*}
&&
 {\rm ad}_{h} A \, (XY)
= q^{h-i} \bigl({\rm ad}_{i} A \, (X)\bigr) Y
+ q^{j-h} X
\bigl(
{\rm ad}_{j} A \, (Y)\bigr) + 
q^{j-i}(q^{h-i-j}-q^{i+j-h})XAY.
\end{eqnarray*}
\end{lemma}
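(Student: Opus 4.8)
The plan is to prove this identity by a direct expansion of both sides into the three monomials $AXY$, $XAY$, and $XYA$, using nothing beyond the definition ${\rm ad}_r A\,(Z) = q^r A Z - q^{-r} Z A$. First I would rewrite the left-hand side as
\[
{\rm ad}_h A\,(XY) = q^h AXY - q^{-h} XYA,
\]
which notably contains no $XAY$ term. The entire task then reduces to expanding the right-hand side and checking that it reproduces exactly these two monomials with matching coefficients, and that the spurious $XAY$ contributions cancel.

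Next I would expand each of the three summands on the right separately. The term $q^{h-i}\bigl({\rm ad}_i A\,(X)\bigr)Y$ produces $q^h AXY - q^{h-2i} XAY$, and the term $q^{j-h} X\bigl({\rm ad}_j A\,(Y)\bigr)$ produces $q^{2j-h} XAY - q^{-h} XYA$; the explicit correction term expands as
\[
q^{j-i}(q^{h-i-j}-q^{i+j-h})\,XAY = (q^{h-2i} - q^{2j-h})\,XAY.
\]
Collecting like terms, the coefficient of $AXY$ is $q^h$ and the coefficient of $XYA$ is $-q^{-h}$, both coming only from the first two summands and already agreeing with the left-hand side. So everything hinges on the coefficient of $XAY$.

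The one point requiring care, though it is hardly a genuine obstacle, is the cancellation in the middle term: the three contributions to $XAY$ are $-q^{h-2i}$, $+q^{2j-h}$, and $+(q^{h-2i}-q^{2j-h})$, which sum to zero. Recognizing that the role of the third summand in the statement is precisely to cancel the unwanted $XAY$ terms left over from the first two makes the whole computation transparent and essentially forced. Since the argument is a finite expansion and recollection of monomials, I expect no real difficulty beyond careful bookkeeping of the exponents of $q$.
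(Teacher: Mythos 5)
Your proof is correct, and it is exactly what the paper intends: the paper's proof of Lemma \ref{lem:1} is simply the word ``Routine,'' meaning precisely the direct expansion into the monomials $AXY$, $XAY$, $XYA$ that you carried out. Your bookkeeping of the $q$-exponents and the cancellation of the $XAY$ terms all check out.
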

\begin{proof} Routine.
\end{proof}


\noindent The next four lemmas are routinely obtained using
Lemmas
\ref{lem:1a}--\ref{lem:1}.

\begin{lemma}
\label{lem:adaBB}
For  $h\in \mathbb Z$ and $i,j \in \mathbb Z^+$ and
$X, Y \in \mathcal A$,
\begin{eqnarray*}
&&
{\rm ad}_{h} A \, 
\biggl(
\Bigl(
({\rm bad}\,A)_i (X)
\Bigr)
\Bigl(
({\rm bad}\,A)_j (Y)
\Bigr)
\biggr)
=
q^{h-i} (q^{2i}-q^{-2i})
\Bigl(S_i(X)\Bigr)
\Bigl(
({\rm bad}\,A)_j (Y)
\Bigr)
\\
&& \qquad \qquad \qquad \qquad 
+
\;q^{j-h}(q^{2j}-q^{-2j})
\Bigl(
({\rm bad}\,A)_i (X)
\Bigr)
\Bigl(S_j(Y)\Bigr)
\\
&& \qquad \qquad \qquad \qquad 
+
\;q^{j-i}(q^{h-i-j}-q^{i+j-h})
\Bigl(
({\rm bad}\,A)_i (X)
\Bigr) A
\Bigl(
({\rm bad}\,A)_j (Y)
\Bigr).
\end{eqnarray*}
\end{lemma}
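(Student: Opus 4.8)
The plan is to prove Lemma~\ref{lem:adaBB} by a direct application of the product rule for the quantum adjoint, namely Lemma~\ref{lem:1}, followed by rewriting the resulting pieces in terms of $S_i$, $S_j$, and the balanced adjoint products $({\rm bad}\,A)_i$, $({\rm bad}\,A)_j$. The structure of the target identity is suggestive: it is exactly what one obtains by treating $({\rm bad}\,A)_i(X)$ and $({\rm bad}\,A)_j(Y)$ as the two factors $X'$ and $Y'$ in the product rule, with the three terms on the right matching the three terms of Lemma~\ref{lem:1}. So the first step is to set $X' = ({\rm bad}\,A)_i(X)$ and $Y' = ({\rm bad}\,A)_j(Y)$, and apply Lemma~\ref{lem:1} to ${\rm ad}_h A\,(X'Y')$ with a judicious choice of the free indices $i,j$ in that lemma.

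The key observation is the freedom in Lemma~\ref{lem:1}: the indices playing the role of the interior shifts are arbitrary, so I would specialize them to $i$ and $j$ (the same subscripts appearing on the two balanced factors) so that the outputs ${\rm ad}_i A\,(X')$ and ${\rm ad}_j A\,(Y')$ are precisely the expressions controlled by Lemma~\ref{lem:adbad}. Indeed, Lemma~\ref{lem:adbad} tells me that ${\rm ad}_i A\,(({\rm bad}\,A)_i(X))$ equals $(q^{2i}-q^{-2i})S_i(X)$ plus a term that, because the shift index matches the balanced subscript, has vanishing $A$-tail; concretely, with the shift equal to $i$ the factor $q^{i-j}-q^{j-i}$ in Lemma~\ref{lem:adbad} collapses to $0$. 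The same happens for the $Y'$ factor with shift $j$. After substituting these two reductions, the first two terms of Lemma~\ref{lem:1} become exactly the first two terms of the claimed identity, carrying the prefactors $q^{h-i}(q^{2i}-q^{-2i})$ and $q^{j-h}(q^{2j}-q^{-2j})$.

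The third term of Lemma~\ref{lem:1} is the middle $X'AY'$ contribution with coefficient $q^{j-i}(q^{h-i-j}-q^{i+j-h})$, and this already matches the third term of the target verbatim once $X'$ and $Y'$ are expanded back to $({\rm bad}\,A)_i(X)$ and $({\rm bad}\,A)_j(Y)$. So no further manipulation is needed there. The remaining work is purely bookkeeping: collecting the three contributions and confirming the prefactors agree. I expect the main (though still minor) obstacle to be verifying that the index specialization in Lemma~\ref{lem:1} really does force the $A$-tail terms of Lemma~\ref{lem:adbad} to drop out simultaneously for both factors with a \emph{single} consistent choice of the lemma's free indices; this is why the excerpt groups Lemma~\ref{lem:adaBB} among the four lemmas asserted to follow routinely from Lemmas~\ref{lem:1a}--\ref{lem:1}. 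Once that alignment is confirmed, the identity follows immediately.
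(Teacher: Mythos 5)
Your proof is correct and takes essentially the same route as the paper, whose entire proof is the remark that Lemma~\ref{lem:adaBB} is routinely obtained from Lemmas~\ref{lem:1a}--\ref{lem:1}: you apply the product rule of Lemma~\ref{lem:1} with inner indices chosen as $i,j$ and then reduce ${\rm ad}_iA\,({\rm bad}\,A)_i(X)$ and ${\rm ad}_jA\,({\rm bad}\,A)_j(Y)$ via Lemma~\ref{lem:adbad}, whose $A$-tails vanish when the adjoint index equals the balanced subscript. Your residual worry about a ``single consistent choice'' is unfounded, since the two inner indices in Lemma~\ref{lem:1} are independent free parameters, so specializing one to $i$ and the other to $j$ kills both tails simultaneously.
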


\begin{lemma}
\label{lem:adaSS}
For  $h\in \mathbb Z$ and $i,j \in \mathbb N$ and
$X, Y \in \mathcal A$,
\begin{eqnarray*}
&&
{\rm ad}_{h} A \, 
\biggl(
\Bigl(
S_i (X)
\Bigr)
\Bigl(
S_j (Y)
\Bigr)
\biggr)
=
q^{h+i} (q^{2i+1}-q^{-2i-1})
\Bigl(
({\rm bad}\,A)_{i+1} (X)
\Bigr)
\Bigl(
S_j(Y)
\Bigr)
\\
&& \qquad \qquad \qquad \qquad 
-
\;q^{h+i}(q^{2i}-q^{-2i})
\Bigl(
({\rm bad}\,A)_i (X)
\Bigr)
\Bigl(S_j(Y)\Bigr)
\\
&& \qquad \qquad \qquad \qquad 
+
\;q^{-h-j}(q^{2j+1}-q^{-2j-1})
\Bigl(
S_i (X)
\Bigr)
\Bigl(
({\rm bad}\,A)_{j+1} (Y)
\Bigr)
\\
&& \qquad \qquad \qquad \qquad 
-
\;q^{-h-j}(q^{2j}-q^{-2j})
\Bigl(
S_i (X)
\Bigr)
\Bigl(
({\rm bad}\,A)_{j} (Y)
\Bigr)
\\
&& \qquad \qquad \qquad \qquad 
+
\;q^{i-j}(q^{h+i+j}-q^{-h-i-j})
\Bigl(
S_i (X)
\Bigr)A
\Bigl(
S_{j} (Y)
\Bigr).
\end{eqnarray*}
\end{lemma}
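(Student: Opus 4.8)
The plan is to expand ${\rm ad}_h A$ across the product $\bigl(S_i(X)\bigr)\bigl(S_j(Y)\bigr)$ using the Leibniz-type identity of Lemma \ref{lem:1}, and then to collapse the two resulting inner adjoints by means of Lemma \ref{lem:adiSj}. The whole argument hinges on choosing the two free indices of Lemma \ref{lem:1} correctly; once they are fixed, every term lands in the desired shape with no leftover debris, which is exactly the sense in which the claim is ``routinely obtained''.

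First I would apply Lemma \ref{lem:1} to $PQ$ with $P=S_i(X)$ and $Q=S_j(Y)$, taking its two free indices to be $-i$ and $-j$ respectively. This is the key move, and it is forced by the requirement that the pure ``middle'' summand $q^{b-a}(q^{h-a-b}-q^{a+b-h})\,PAQ$ of Lemma \ref{lem:1} reproduce the fifth term $q^{i-j}(q^{h+i+j}-q^{-h-i-j})\bigl(S_i(X)\bigr)A\bigl(S_j(Y)\bigr)$ of the claim: solving $b-a=i-j$ and $a+b=-i-j$ gives $a=-i$, $b=-j$. With this choice Lemma \ref{lem:1} yields
$${\rm ad}_h A\Bigl(\bigl(S_i(X)\bigr)\bigl(S_j(Y)\bigr)\Bigr) = q^{h+i}\Bigl({\rm ad}_{-i}A\bigl(S_i(X)\bigr)\Bigr)\bigl(S_j(Y)\bigr) + q^{-h-j}\bigl(S_i(X)\bigr)\Bigl({\rm ad}_{-j}A\bigl(S_j(Y)\bigr)\Bigr) + q^{i-j}(q^{h+i+j}-q^{-h-i-j})\bigl(S_i(X)\bigr)A\bigl(S_j(Y)\bigr),$$
and the third summand is already the fifth term of the claim.

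Next I would evaluate the two inner adjoints ${\rm ad}_{-i}A\bigl(S_i(X)\bigr)$ and ${\rm ad}_{-j}A\bigl(S_j(Y)\bigr)$ using the two halves of Lemma \ref{lem:adiSj}. Here the index choice pays off a second time: applying the $X$-version with outer index $-i$ and inner index $i$, the coefficient $q^{j}(q^{i+j}-q^{-i-j})$ of its trailing term $\bigl(S_j(X)\bigr)A$ specializes to $q^{i}(q^{0}-q^{0})=0$; symmetrically, applying the $Y$-version with outer index $-j$ kills the $A\bigl(S_j(Y)\bigr)$ term. What survives is
$${\rm ad}_{-i}A\bigl(S_i(X)\bigr) = (q^{2i+1}-q^{-2i-1})\,({\rm bad}\,A)_{i+1}(X) - (q^{2i}-q^{-2i})\,({\rm bad}\,A)_i(X),$$
together with the analogous expression for ${\rm ad}_{-j}A\bigl(S_j(Y)\bigr)$ in the index $j$.

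Substituting these two evaluations back and distributing the prefactors $q^{h+i}$ and $q^{-h-j}$ produces the first and second terms of the claim (from the $X$-side) and the third and fourth terms (from the $Y$-side), while the middle term has already matched; collecting the five summands finishes the proof. I expect the only real obstacle to be the initial index bookkeeping in Lemma \ref{lem:1}: the verification itself is merely a matching of exponents, but one has to spot that the balanced choice $a=-i$, $b=-j$ is precisely what forces the unwanted $A$-tails in Lemma \ref{lem:adiSj} to vanish. After that the computation is purely mechanical, exactly parallel to the evaluation of ${\rm ad}_h A$ on $\bigl(({\rm bad}\,A)_i(X)\bigr)\bigl(({\rm bad}\,A)_j(Y)\bigr)$ in Lemma \ref{lem:adaBB}.
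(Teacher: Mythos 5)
Your proposal is correct and follows essentially the same route as the paper: the paper states that Lemma \ref{lem:adaSS} is ``routinely obtained'' from Lemmas \ref{lem:1a}--\ref{lem:1}, and your argument is exactly that routine made explicit --- apply Lemma \ref{lem:1} with free indices $-i$, $-j$ and then collapse the inner adjoints via Lemma \ref{lem:adiSj}, where that index choice makes the trailing $A$-terms vanish. The index bookkeeping checks out, so this is a faithful filling-in of the details the paper omits.
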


\begin{lemma}
\label{lem:adaSB}
For  $h\in \mathbb Z$ and $i \in \mathbb N$ and 
$j \in \mathbb Z^+$ and
$X, Y \in \mathcal A$,
\begin{eqnarray*}
&&
{\rm ad}_{h} A \, 
\biggl(
\Bigl(
S_i (X)
\Bigr)
\Bigl(
({\rm bad}\,A)_{j} (Y)
\Bigr)
\biggr)
=
q^{h+i} (q^{2i+1}-q^{-2i-1})
\Bigl(
({\rm bad}\,A)_{i+1} (X)
\Bigr)
\Bigl(
({\rm bad}\,A)_{j} (Y)
\Bigr)
\\
&& \qquad \qquad \qquad \qquad 
-
\;q^{h+i}(q^{2i}-q^{-2i})
\Bigl(
({\rm bad}\,A)_i (X)
\Bigr)
\Bigl(
({\rm bad}\,A)_{j} (Y)
\Bigr)
\\
&& \qquad \qquad \qquad \qquad 
+
\;q^{j-h}(q^{2j}-q^{-2j})
\Bigl(
S_i (X)
\Bigr)
\Bigl(
S_{j} (Y)
\Bigr)
\\
&& \qquad \qquad \qquad \qquad 
+
\;q^{i+j}(q^{h+i-j}-q^{j-h-i})
\Bigl(
S_i (X)
\Bigr)A
\Bigl(
({\rm bad}\,A)_{j} (Y)
\Bigr).
\end{eqnarray*}
\end{lemma}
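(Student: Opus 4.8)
The plan is to apply the generalized Leibniz rule of Lemma \ref{lem:1} to the product $\bigl(S_i(X)\bigr)\bigl(({\rm bad}\,A)_j(Y)\bigr)$ and then expand the two resulting quantum adjoints using Lemmas \ref{lem:adiSj} and \ref{lem:adbad}. Writing $P = S_i(X)$ and $Q = ({\rm bad}\,A)_j(Y)$, Lemma \ref{lem:1} gives, for any auxiliary indices $a,b \in \mathbb{Z}$,
\[
{\rm ad}_h A\,(PQ) = q^{h-a}\bigl({\rm ad}_a A\,(P)\bigr)Q + q^{b-h}P\bigl({\rm ad}_b A\,(Q)\bigr) + q^{b-a}(q^{h-a-b}-q^{a+b-h})\,PAQ.
\]
I would then substitute the first (``$X$'') equation of Lemma \ref{lem:adiSj} for ${\rm ad}_a A\,(S_i(X))$ and the second (``$Y$'') equation of Lemma \ref{lem:adbad} for ${\rm ad}_b A\,(({\rm bad}\,A)_j(Y))$, and collect like terms.

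First I would gather the terms that are \emph{not} of the bridging form $(\,\cdot\,)A(\,\cdot\,)$. From the $X$-side, Lemma \ref{lem:adiSj} produces $({\rm bad}\,A)_{i+1}(X)$ and $({\rm bad}\,A)_i(X)$ with prefactors $q^{a+i}(q^{2i+1}-q^{-2i-1})$ and $-q^{a+i}(q^{2i}-q^{-2i})$; multiplying by the Leibniz factor $q^{h-a}$ clears the $a$-dependence and yields exactly the first two terms of the claimed right-hand side. Likewise, the surviving $S_j(Y)$ term from the $Y$-side, weighted by $q^{b-h}$, gives $q^{j-h}(q^{2j}-q^{-2j})\bigl(S_i(X)\bigr)\bigl(S_j(Y)\bigr)$, with the $b$-dependence again cancelling. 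So the three ``main'' terms of the target drop out immediately and, reassuringly, do not depend on the free indices $a,b$.

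The one step that needs care — and which I expect to be the main obstacle — is accounting for the three separate sources of the bridging term $\bigl(S_i(X)\bigr)A\bigl(({\rm bad}\,A)_j(Y)\bigr)$: one from the $(S_i(X))A$ tail in Lemma \ref{lem:adiSj}, one from the $A(({\rm bad}\,A)_j(Y))$ head in Lemma \ref{lem:adbad}, and one from the explicit $PAQ$ term in the Leibniz rule. After collecting powers of $q$, their coefficients are $q^{h+2i}-q^{h-2a}$, $q^{2b-h}-q^{2j-h}$, and $q^{h-2a}-q^{2b-h}$, respectively. Summing the three, the $q^{h-2a}$ and $q^{2b-h}$ contributions cancel in pairs, leaving $q^{h+2i}-q^{2j-h}$, which factors as $q^{i+j}(q^{h+i-j}-q^{j-h-i})$, precisely the coefficient in the last term of the claim. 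Thus the entire expression is independent of the auxiliary indices $a,b$, and any convenient choice (for instance $a=b=h$) completes the argument after this routine collection of powers of $q$.
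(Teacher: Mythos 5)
Your proposal is correct, and it follows exactly the route the paper intends: the paper simply states that Lemma \ref{lem:adaSB} is ``routinely obtained using Lemmas \ref{lem:1a}--\ref{lem:1},'' and your argument supplies those routine details by combining the Leibniz-type identity of Lemma \ref{lem:1} with the first equation of Lemma \ref{lem:adiSj} and the second equation of Lemma \ref{lem:adbad}, after which the three bridging contributions collapse to the coefficient $q^{h+2i}-q^{2j-h}=q^{i+j}(q^{h+i-j}-q^{j-h-i})$ as you computed. Your observation that the auxiliary indices $a,b$ cancel identically is a nice bonus confirming the consistency of the identity, but the substance of the argument is the same as the paper's.
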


\begin{lemma}
\label{lem:adaBS}
For  $h\in \mathbb Z$ and $i \in \mathbb Z^+$ and 
$j \in \mathbb N$ and
$X, Y \in \mathcal A$,
\begin{eqnarray*}
&&
{\rm ad}_{h} A \, 
\biggl(
\Bigl(
({\rm bad}\,A)_{i} (X)
\Bigr)
\Bigl(
S_j (Y)
\Bigr)
\biggr)
=
q^{-h-j} (q^{2j+1}-q^{-2j-1})
\Bigl(
({\rm bad}\,A)_{i} (X)
\Bigr)
\Bigl(
({\rm bad}\,A)_{j+1} (Y)
\Bigr)
\\
&& \qquad \qquad \qquad \qquad 
-
\;q^{-h-j}(q^{2j}-q^{-2j})
\Bigl(
({\rm bad}\,A)_i (X)
\Bigr)
\Bigl(
({\rm bad}\,A)_{j} (Y)
\Bigr)
\\
&& \qquad \qquad \qquad \qquad 
+
\;q^{h-i}(q^{2i}-q^{-2i})
\Bigl(
S_i (X)
\Bigr)
\Bigl(
S_{j} (Y)
\Bigr)
\\
&& \qquad \qquad \qquad \qquad 
+
\;q^{-i-j}(q^{h-i+j}-q^{i-h-j})
\Bigl(
({\rm bad}\,A)_{i} (X)
\Bigr)A
\Bigl(
S_j (Y)
\Bigr).
\end{eqnarray*}
\end{lemma}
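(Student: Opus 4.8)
The plan is to treat the argument of ${\rm ad}_h A$ as a product of the two factors $U = ({\rm bad}\,A)_i(X)$ and $V = S_j(Y)$, and to differentiate this product by means of the generalized Leibniz rule in Lemma \ref{lem:1}. That lemma writes ${\rm ad}_h A(UV)$ as a sum of three terms: one in which a quantum adjoint hits $U$ on the left, one in which a quantum adjoint hits $V$ on the right, and a middle term proportional to $UAV$. I would then rewrite the two outer single-factor adjoints using the first form of Lemma \ref{lem:adbad} (for a quantum adjoint acting on $({\rm bad}\,A)_i(X)$) and the second form of Lemma \ref{lem:adiSj} (for a quantum adjoint acting on $S_j(Y)$). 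Each of these converts a single-factor adjoint into a combination of $S$- and $({\rm bad}\,A)$-terms, plus a boundary term that carries a stray factor of $A$ on one side.

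The crux is the choice of the two free indices in Lemma \ref{lem:1}. I would specialize the index governing the adjoint on $U$ to the value $i$, and the index governing the adjoint on $V$ to the value $-j$. With these choices the boundary $A$-terms in Lemmas \ref{lem:adbad} and \ref{lem:adiSj} acquire the scalar prefactors $q^{-i}(q^{i-i}-q^{i-i})$ and $q^{-j}(q^{-j+j}-q^{j-j})$ respectively, both of which vanish. Consequently the two outer factor-adjoints collapse to the single clean expressions
\[
{\rm ad}_i A\,\bigl(({\rm bad}\,A)_i(X)\bigr) = (q^{2i}-q^{-2i})\,S_i(X),
\]
\[
{\rm ad}_{-j} A\,\bigl(S_j(Y)\bigr) = (q^{2j+1}-q^{-2j-1})({\rm bad}\,A)_{j+1}(Y) - (q^{2j}-q^{-2j})({\rm bad}\,A)_j(Y),
\]
with no leftover $A$ on either side, so that only the single middle term $UAV$ remains as a boundary contribution.

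It then remains to substitute these collapsed expressions back into Lemma \ref{lem:1} and read off the coefficients. With the index choice above, the three Leibniz prefactors are $q^{h-i}$, $q^{-j-h}$, and $q^{-j-i}(q^{h-i+j}-q^{i-j-h})$; multiplying them against the collapsed factors produces exactly the four terms on the right-hand side of the assertion, namely the $(S_i(X))(S_j(Y))$ term, the two $(({\rm bad}\,A)_i(X))(({\rm bad}\,A)_{j+1}(Y))$ and $(({\rm bad}\,A)_i(X))(({\rm bad}\,A)_j(Y))$ terms, and the surviving middle term $(({\rm bad}\,A)_i(X))A(S_j(Y))$ with coefficient $q^{-i-j}(q^{h-i+j}-q^{i-h-j})$. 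I expect the only real obstacle to be bookkeeping: keeping the powers of $q$ straight and confirming that no residual $A$-bordered term other than the intended one survives, which is precisely what the vanishing of the two boundary prefactors guarantees. The same recipe---apply Lemma \ref{lem:1}, then specialize the two free indices so as to annihilate the boundary $A$-terms---also proves the companion Lemmas \ref{lem:adaBB}, \ref{lem:adaSS}, and \ref{lem:adaSB}, the only change being which of Lemmas \ref{lem:adbad} and \ref{lem:adiSj}, and which of their two forms, is applied to each of the two factors.
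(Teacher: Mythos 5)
Your proposal is correct and matches the paper's own (terse) argument: the paper states that Lemma \ref{lem:adaBS} is ``routinely obtained'' from Lemmas \ref{lem:1a}--\ref{lem:1}, and your derivation---applying the Leibniz rule of Lemma \ref{lem:1} with the free indices specialized to $i$ and $-j$ so that the boundary $A$-terms of Lemmas \ref{lem:adbad} and \ref{lem:adiSj} vanish---is exactly that routine computation made explicit. The collapsed identities and all resulting coefficients, including the middle-term coefficient $q^{-i-j}(q^{h-i+j}-q^{i-h-j})$, check out.
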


\begin{proposition}
\label{prop:TXY}
For $n \in \mathbb N$  and $X,Y \in \mathcal A$,
\begin{eqnarray*}
&&
\sum_{i=0}^n S_i(XY)= 
\sum_{\stackrel{r,s \in \mathbb N}{
r+s\leq n}} S_r(X) S_s(Y)
+ 
\sum_{\stackrel{r,s\in \mathbb N}{r+s=n-1}} 
\Bigl(
({\rm bad}\,A)_{r+1} (X)
\Bigr)
\Bigl(
({\rm bad}\,A)_{s+1} (Y)
\Bigr) q^{r-s},
\\
&&({\rm bad}\,A)_{n+1}(XY)
= \sum_{\stackrel{r,s\in \mathbb N}{r+s=n}} S_r(X) 
\Bigl(
({\rm bad}\,A)_{s+1}(Y)
\Bigr)
q^{-r}
+
\sum_{\stackrel{r,s\in \mathbb N}{r+s=n}}
\Bigl(
({\rm bad}\,A)_{r+1} (X)
\Bigr)
S_s(Y) q^{s}
\\
&& \qquad \qquad \qquad \qquad  -\;
\sum_{\stackrel{r,s\in \mathbb N}{r+s=n-1}} 
\Bigl(
({\rm bad}\,A)_{r+1} (X)
\Bigr)
A 
\Bigl(
({\rm bad}\,A)_{s+1} (Y)
\Bigr).
\end{eqnarray*}
\end{proposition}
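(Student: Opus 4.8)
The plan is to prove the two identities simultaneously by induction on $n$, following the telescoping strategy used for Proposition \ref{prop:TTp}. The base case $n=0$ is immediate: the first identity reads $S_0(XY)=S_0(X)S_0(Y)$, i.e. $XY=XY$, and the second reads $({\rm bad}\,A)_1(XY)=S_0(X)({\rm bad}\,A)_1(Y)+({\rm bad}\,A)_1(X)S_0(Y)$, which is just the Leibniz rule for ${\rm bad}_0 A=({\rm ad}\,A)/(q-q^{-1})$.

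The engine of the induction is a pair of one-step recursions obtained by specializing earlier lemmas so that their boundary ``$\,\cdot\,A$'' terms vanish. Putting $i=j=n$ in Lemma \ref{lem:adbad} gives $S_n(Z)=({\rm ad}_n A\,({\rm bad}\,A)_n(Z))/(q^{2n}-q^{-2n})$, while putting $i=-n$, $j=n$ in Lemma \ref{lem:adiSj} gives $(q^{2n+1}-q^{-2n-1})({\rm bad}\,A)_{n+1}(Z)={\rm ad}_{-n}A\,S_n(Z)+(q^{2n}-q^{-2n})({\rm bad}\,A)_n(Z)$; both rely on the stated commutativity of the maps ${\rm ad}_r A$. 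These let me raise the index by one using a single quantum adjoint applied to a level-$n$ expression, after which the product-expansion Lemmas \ref{lem:adaBB}--\ref{lem:adaBS} rewrite everything in terms of the four standard monomial types $S_\bullet(X)S_\bullet(Y)$, $S_\bullet(X)({\rm bad}\,A)_\bullet(Y)$, $({\rm bad}\,A)_\bullet(X)S_\bullet(Y)$ and $({\rm bad}\,A)_\bullet(X)({\rm bad}\,A)_\bullet(Y)$, together with terms carrying an extra $A$ in the middle.

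Within level $n$ I would establish the first identity and then the second. For the first it is enough to check that $S_n(XY)$ equals the difference of the right-hand sides of the first identity at levels $n$ and $n-1$, since adding that difference to the inductive hypothesis at level $n-1$ yields the claim at level $n$. I obtain $S_n(XY)$ from the first recursion by substituting the second identity at level $n-1$ for $({\rm bad}\,A)_n(XY)$ and applying ${\rm ad}_n A$ via Lemmas \ref{lem:adaSB} and \ref{lem:adaBS}. For the second identity I then feed the resulting closed form for $S_n(XY)$ --- now a combination of products $S_\bullet(X)S_\bullet(Y)$ and $({\rm bad}\,A)_\bullet(X)({\rm bad}\,A)_\bullet(Y)$ only --- into the second recursion and apply ${\rm ad}_{-n}A$ via Lemmas \ref{lem:adaSS} and \ref{lem:adaBB}, finally combining with the second identity at level $n-1$. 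The choice $h=\pm n$ for the outer adjoint is exactly what forces several of the coefficients in those lemmas to collapse, so that the surviving terms can only be of the asserted types.

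The genuinely delicate point, as in Proposition \ref{prop:TTp}, is the bookkeeping, and specifically the terms with $A$ wedged between two factors. In the first-identity step one must apply ${\rm ad}_n A$ to the summand $({\rm bad}\,A)_{r+1}(X)\,A\,({\rm bad}\,A)_{s+1}(Y)$ coming from the hypothesis; I would expand this using Lemma \ref{lem:1}, clearing the central $A$ via the substitutions of Lemma \ref{lem:1a} and the commutation ${\rm ad}_j A\,(AZ)=A\,({\rm ad}_j A\,(Z))$, and then verify that the resulting middle-$A$ contributions cancel completely, since the target at level $n$ has none. In the second-identity step the middle-$A$ terms are instead produced by the final summands of Lemmas \ref{lem:adaSS} and \ref{lem:adaBB}, and one must check that they assemble into precisely the single sum $-\sum_{r+s=n-1}({\rm bad}\,A)_{r+1}(X)\,A\,({\rm bad}\,A)_{s+1}(Y)$. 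Reconciling these middle-$A$ contributions, and confirming that the remaining $q$-power coefficients telescope to the stated ones, is where I expect essentially all of the work to lie.
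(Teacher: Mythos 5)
Your proposal is correct and follows essentially the same route as the paper: a mutual induction in which the first identity at level $n$ is obtained by applying ${\rm ad}_{n}A$ (the paper uses ${\rm ad}_{n+1}A$ at the shifted index) to the second identity at the previous level, and the second identity is obtained by applying ${\rm ad}_{-n}A$ to the telescoped first identity, everything being expanded via Lemmas \ref{lem:adaBB}--\ref{lem:adaBS} with the choice $h=\pm n$ collapsing the unwanted coefficients. Your extracted one-step recursions from Lemmas \ref{lem:adbad} and \ref{lem:adiSj}, and your explicit tracking of the middle-$A$ cancellations, are just a more detailed account of what the paper dismisses as ``a brief calculation.''
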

\begin{proof} The proof is by induction on $n$.
Let $S(n)$ (resp. $B(n)$) denote the first
(resp.
second) displayed equation in the proposition
statement. The equation $S(0)$ holds since $S_0$ the identity map.
The equation $B(0)$ holds by
Lemma
\ref{lem:1} at $h=0$, $i=0$, $j=0$.
To get $S(n+1)$ from $S(n)$ and $B(n)$, 
apply ${\rm ad}_{n+1} A$ to each side of
$B(n)$, and evaluate the result using
Lemmas
\ref{lem:adaBB}--\ref{lem:adaBS}.
One obtains $S(n+1)-S(n)$ after a
brief calculation.
For the rest of this proof, assume that  $n\geq 1$.
To get $B(n)$ from $S(n)$ and $S(n-1)$,
apply ${\rm ad}_{-n} A$ to each side of
$S(n)-S(n-1)$, and evaluate the result using
Lemmas
\ref{lem:adaBB}--\ref{lem:adaBS}.
 One obtains
$B(n)$ 
after a brief calculation.
The result follows from these comments.
\end{proof}

\noindent The following result is a variation on
Proposition
\ref{prop:TXY}.

\begin{proposition}
\label{prop:primever}
For $n \in \mathbb N$ and $X,Y \in \mathcal A$,
\begin{eqnarray*}
&&
\sum_{i=0}^n S'_i(XY)= \sum_{\stackrel{r,s\in \mathbb N}{r+s\leq n}}
S'_r(X) S'_s(Y)
+ 
\sum_{\stackrel{r,s\in \mathbb N}{r+s=n-1}} 
\Bigl(
({\rm bad}\,A)_{r+1} (X)
\Bigr)
\Bigl(
({\rm bad}\,A)_{s+1} (Y)
\Bigr) q^{s-r},
\\
&&({\rm bad}\,A)_{n+1}(XY)
= \sum_{\stackrel{r,s\in \mathbb N}{r+s=n}} S'_r(X) 
\Bigl(
({\rm bad}\,A)_{s+1}(Y)
\Bigr)
q^{r}
+
\sum_{\stackrel{r,s\in \mathbb N}{r+s=n}}
\Bigl(
({\rm bad}\,A)_{r+1} (X)
\Bigr)
S'_s(Y) q^{-s}
\\
&& \qquad \qquad \qquad \qquad  +\;
\sum_{\stackrel{r,s\in \mathbb N}{r+s=n-1}} 
\Bigl(
({\rm bad}\,A)_{r+1} (X)
\Bigr)
A 
\Bigl(
({\rm bad}\,A)_{s+1} (Y)
\Bigr).
\end{eqnarray*}
\end{proposition}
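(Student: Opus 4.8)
The plan is to deduce Proposition \ref{prop:primever} from the already-established Proposition \ref{prop:TXY} by exploiting the symmetry $q \mapsto q^{-1}$. Every object in sight---${\rm ad}_r A$, ${\rm bad}_n A$, $({\rm bad}\,A)_n$, $S_n$, $S'_n$---is built from the field parameter in a way that makes sense for any nonzero element of $\mathbb F$ that is not a root of unity, and the proof of Proposition \ref{prop:TXY} is a purely formal manipulation with Lemmas \ref{lem:adaBB}--\ref{lem:adaBS} and Lemma \ref{lem:1}, valid for any such parameter and any $\mathbb F$-algebra $\mathcal A$ with any $A \in \mathcal A$. Since $q^{-1}$ is again nonzero and not a root of unity (an inverse of a non-root-of-unity is a non-root-of-unity), Proposition \ref{prop:TXY} holds verbatim with $q$ replaced throughout by $q^{-1}$. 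The task is then to translate that $q^{-1}$-identity back into the original parameter and recognize the result as Proposition \ref{prop:primever}.

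First I would record how the basic objects transform. Writing a superscript $(q^{-1})$ for the object computed with parameter $q^{-1}$, one has ${\rm ad}_r A^{(q^{-1})} = {\rm ad}_{-r} A$ directly from the definition of the quantum adjoint. For the balanced adjoint, ${\rm bad}_0 A^{(q^{-1})} = {\rm ad}\,A/(q^{-1}-q) = -{\rm bad}_0 A$, whereas for $n \in \mathbb Z^+$ the numerator of ${\rm bad}_n A$ is unchanged: its square term is manifestly invariant, and the cross term $({\rm ad}_n A)({\rm ad}_{-n} A)$ becomes $({\rm ad}_{-n} A)({\rm ad}_n A)$, which is equal since the quantum adjoints commute; the denominator picks up two sign flips that cancel. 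Hence ${\rm bad}_n A^{(q^{-1})} = {\rm bad}_n A$ for $n \geq 1$. Consequently $({\rm bad}\,A)_n^{(q^{-1})} = -({\rm bad}\,A)_n$ for $n \in \mathbb Z^+$ while $({\rm bad}\,A)_0^{(q^{-1})} = I$, and a short computation then gives $S_n^{(q^{-1})} = S'_n$ for all $n \in \mathbb N$.

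Next I would substitute these rules into the two displayed equations of Proposition \ref{prop:TXY}, read with $q$ replaced by $q^{-1}$. In the first equation every $({\rm bad}\,A)$ factor occurs in a product of two, so the two sign flips cancel; together with $S_r \mapsto S'_r$, $S_s \mapsto S'_s$ and $q^{r-s} \mapsto q^{s-r}$ this yields exactly the first equation of Proposition \ref{prop:primever}. In the second equation the left-hand side $({\rm bad}\,A)_{n+1}(XY)$ acquires a global sign $-1$; on the right, each of the first two sums carries a single $({\rm bad}\,A)$ factor and hence an extra $-1$ (with $q^{-r} \mapsto q^r$ and $q^s \mapsto q^{-s}$), while the last sum carries two $({\rm bad}\,A)$ factors and so is sign-neutral. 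Multiplying the resulting identity through by $-1$ restores $({\rm bad}\,A)_{n+1}(XY)$ on the left, fixes the powers of $q$ in the first two sums, and---crucially---turns the leading minus on the last sum into a plus. This is precisely the second equation of Proposition \ref{prop:primever}.

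The main obstacle is nothing conceptual but the sign bookkeeping: the asymmetry between ${\rm bad}_0 A$, which flips sign, and ${\rm bad}_n A$ for $n \geq 1$, which does not, forces one to track the parity of the number of $({\rm bad}\,A)$ factors in each term separately and to combine these per-term signs with the global sign coming from the left-hand side. Once this accounting is carried out carefully the two identities fall out with no further computation. As an alternative, one could instead mimic the induction in the proof of Proposition \ref{prop:TXY} directly, applying ${\rm ad}_n A$ and ${\rm ad}_{-(n+1)} A$ where that proof uses ${\rm ad}_{-n} A$ and ${\rm ad}_{n+1} A$; since Lemmas \ref{lem:adaBB}--\ref{lem:adaBS} are stated for all $h \in \mathbb Z$, this route requires no new auxiliary lemmas, though it is more laborious than the symmetry argument.
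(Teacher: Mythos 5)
Your proposal is correct and follows essentially the same route as the paper, whose proof reads in full: ``In Proposition \ref{prop:TXY}, replace $q$ by $q^{-1}$ and evaluate the result using Definitions \ref{def:bad}, \ref{def:Tn}.'' Your write-up simply makes explicit the sign bookkeeping (${\rm ad}_r A \mapsto {\rm ad}_{-r}A$, ${\rm bad}_0 A \mapsto -{\rm bad}_0 A$, ${\rm bad}_n A$ invariant for $n\geq 1$, hence $S_n \mapsto S'_n$) that the paper leaves to the reader, and that bookkeeping is carried out correctly.
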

\begin{proof}  In Proposition
 \ref{prop:TXY}, replace $q$ by $q^{-1}$ and
 evaluate the result using
 Definitions
 \ref{def:bad}, \ref{def:Tn}.
\end{proof}

\noindent Strictly speaking we do not need the
following result; we mention it for the sake
of completeness.

\begin{proposition}
\label{prop:BAD}
For $n \in \mathbb N$ and $X,Y \in \mathcal A$,
\begin{eqnarray*}
&&({\rm bad}\,A)_{n+1}(XY)\\
&& \qquad =\quad 
\sum_{\stackrel{r,s\in \mathbb N}{r+s=n}} S_r(X) 
\Bigl(
({\rm bad}\,A)_{s+1}(Y)
\Bigr)
q^{-r}
+
\sum_{\stackrel{r,s\in \mathbb N}{r+s=n}}
\Bigl(
({\rm bad}\,A)_{r+1} (X)
\Bigr)
S'_s(Y) q^{-s}
\\
&& \qquad =\quad 
\sum_{\stackrel{r,s\in \mathbb N}{r+s=n}} S'_r(X) 
\Bigl(
({\rm bad}\,A)_{s+1}(Y)
\Bigr)
q^{r}
+
\sum_{\stackrel{r,s\in \mathbb N}{r+s=n}}
\Bigl(
({\rm bad}\,A)_{r+1} (X)
\Bigr)
S_s(Y) q^{s}.
\end{eqnarray*}
\end{proposition}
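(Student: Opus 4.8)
The plan is to derive both displayed equalities from the second displayed equation of Proposition~\ref{prop:TXY} (call it $B(n)$), which writes $({\rm bad}\,A)_{n+1}(XY)$ as a sum of three pieces: a first sum $\sum_{r+s=n}S_r(X)\bigl(({\rm bad}\,A)_{s+1}(Y)\bigr)q^{-r}$, which already coincides with the first sum in the first displayed equality to be proved; a second sum $\sum_{r+s=n}\bigl(({\rm bad}\,A)_{r+1}(X)\bigr)S_s(Y)q^{s}$; and a third sum $C:=\sum_{r+s=n-1}\bigl(({\rm bad}\,A)_{r+1}(X)\bigr)A\bigl(({\rm bad}\,A)_{s+1}(Y)\bigr)$ appearing with a minus sign. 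Thus it suffices to absorb $C$ into the remaining sums in two different ways, one for each claimed equality.

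First I would record two ``collapse'' identities, valid for $r\ge 1$:
\[
q^{-r}S_r(X)-q^{r}S'_r(X)=\bigl(({\rm bad}\,A)_r(X)\bigr)A,\qquad
q^{r}S_r(Y)-q^{-r}S'_r(Y)=A\bigl(({\rm bad}\,A)_r(Y)\bigr).
\]
To prove them I would use that ${\rm ad}_r A$ and ${\rm ad}_{-r}A$ commute with every quantum adjoint occurring in $({\rm bad}\,A)_r$, hence with $({\rm bad}\,A)_r$ itself; this lets me write $(q^{2r}-q^{-2r})S_r(X)={\rm ad}_r A\,(W)$ and $(q^{2r}-q^{-2r})S'_r(X)={\rm ad}_{-r}A\,(W)$ with $W=({\rm bad}\,A)_r(X)$, using Definition~\ref{def:Tn}. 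Applying Lemma~\ref{lem:1a} with $(i,j)=(r,-r)$ then gives $q^{-r}{\rm ad}_rA(W)-q^{r}{\rm ad}_{-r}A(W)=(q^{2r}-q^{-2r})WA$, which is the first identity; the second is the analogous computation using the companion formula $AZ$ of Lemma~\ref{lem:1a} applied to $W=({\rm bad}\,A)_r(Y)$.

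Next I would assemble the result. Subtracting the second sum of the first target equality from the second sum of $B(n)$ gives $\sum_{r+s=n}\bigl(({\rm bad}\,A)_{r+1}(X)\bigr)\bigl(q^{s}S_s(Y)-q^{-s}S'_s(Y)\bigr)$; the $s=0$ contribution vanishes since $S_0=S'_0=I$, and for $s\ge 1$ the second collapse identity turns each summand into $\bigl(({\rm bad}\,A)_{r+1}(X)\bigr)A\bigl(({\rm bad}\,A)_s(Y)\bigr)$. Reindexing $s\mapsto s+1$ identifies this difference with $C$, so the second sum of $B(n)$ minus $C$ equals the second sum $\sum_{r+s=n}\bigl(({\rm bad}\,A)_{r+1}(X)\bigr)S'_s(Y)q^{-s}$ of the first equality, which proves the first displayed equality. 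Symmetrically, subtracting $\sum_{r+s=n}S'_r(X)\bigl(({\rm bad}\,A)_{s+1}(Y)\bigr)q^{r}$ from the first sum of $B(n)$ and applying the first collapse identity (the $r=0$ term again vanishing, followed by the reindexing $r\mapsto r+1$) shows that the first sum of $B(n)$ minus $C$ equals that expression, which yields the second displayed equality. Alternatively, once the first equality is in hand, the second follows from it under the substitution $q\mapsto q^{-1}$, which interchanges $S_r\leftrightarrow S'_r$ and sends $({\rm bad}\,A)_m\mapsto -({\rm bad}\,A)_m$ for $m\ge 1$ --- exactly the symmetry already used to pass from Proposition~\ref{prop:TXY} to Proposition~\ref{prop:primever}.

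The main obstacle I anticipate is purely organizational: keeping the index ranges straight through the two reindexings, correctly discarding the degenerate boundary terms $r=0$ and $s=0$ (where the collapse identities fail but the corresponding summands vanish for the independent reason $S_0=S'_0=I$), and checking the base case $n=0$, where $C$ is an empty sum and both equalities reduce to the product rule for ${\rm bad}_0A={\rm ad}\,A/(q-q^{-1})$.
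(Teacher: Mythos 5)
Your proposal is correct and takes essentially the same approach as the paper: your two ``collapse'' identities are precisely the paper's equations (\ref{eq:Sp1}), (\ref{eq:Sp2}), obtained the same way from Definition \ref{def:Tn} and Lemma \ref{lem:1a}, and your absorption of the cross term $C$ into $B(n)$ is the same manipulation as the paper's elimination of $S'_r(X)$, $S'_s(Y)$ followed by comparison with the second equation of Proposition \ref{prop:TXY}. The closing remark that the second equality follows from the first under $q\mapsto q^{-1}$ is a valid bonus, mirroring the paper's passage from Proposition \ref{prop:TXY} to Proposition \ref{prop:primever}, but it is not needed.
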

\begin{proof}  
Using Definition \ref{def:Tn}
and 
Lemma 
\ref{lem:1a} we find 
that for $i\in \mathbb Z^+$,
\begin{eqnarray}
&&
q^{-i}S_i(X)-q^i S'_i(X) = 
\Bigl(
({\rm bad}\,A)_{i} (X)
\Bigr)A,
\label{eq:Sp1}
\\
&&
q^{i}S_i(Y)-q^{-i} S'_i(Y) = 
A \Bigl(
({\rm bad}\,A)_{i} (Y)
\Bigr).
\label{eq:Sp2}
\end{eqnarray}
In the relations from the proposition statement,
eliminate the terms $S'_r(X)$, $S'_s(Y)$ using
(\ref{eq:Sp1}),
(\ref{eq:Sp2})
and compare the
results with the second equation in
Proposition
\ref{prop:TXY}.
\end{proof}

\begin{proposition}
\label{lem:rs}
Given  $r,s \in \mathbb N$ and $X,Y \in \mathcal A$
such that
\begin{eqnarray*}
({\rm bad}\, A)_{r+1} (X)=0, 
\qquad \qquad 
({\rm bad}\, A)_{s+1} (Y)=0.
\end{eqnarray*}
Then 
\begin{eqnarray*}
({\rm bad}\, A)_{r+s+1} (XY)=0.
\end{eqnarray*}
\end{proposition}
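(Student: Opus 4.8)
The plan is to read off $({\rm bad}\,A)_{r+s+1}(XY)$ directly from the second displayed equation of Proposition~\ref{prop:TXY}, taken at $n=r+s$, and to show that under the stated hypotheses every term on its right-hand side vanishes. To keep the roles clear I will write the summation indices there as $a,b$ rather than $r,s$. The right-hand side is then a combination of terms of three shapes: terms $S_a(X)\,\bigl(({\rm bad}\,A)_{b+1}(Y)\bigr)$ with $a+b=r+s$, terms $\bigl(({\rm bad}\,A)_{a+1}(X)\bigr)\,S_b(Y)$ with $a+b=r+s$, and terms $\bigl(({\rm bad}\,A)_{a+1}(X)\bigr)\,A\,\bigl(({\rm bad}\,A)_{b+1}(Y)\bigr)$ with $a+b=r+s-1$. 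The goal is to argue that in each shape the two indices cannot simultaneously be small enough for both factors to be nonzero.

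The key preliminary observation is a propagation (monotonicity) statement: if $({\rm bad}\,A)_{r+1}(X)=0$, then $({\rm bad}\,A)_k(X)=0$ and $S_k(X)=0$ for every $k\ge r+1$, and likewise for $Y$ with $s$ in place of $r$. To see this, recall from Definition~\ref{def:bad} that $({\rm bad}\,A)_k=\prod_{i=0}^{k-1}{\rm bad}_i A$, and that all the operators ${\rm bad}_i A$ commute, since they are built from the quantum adjoints ${\rm ad}_r A$, which pairwise commute (as recorded after the definition of the quantum adjoint). Hence for $k\ge r+1$ we may factor $({\rm bad}\,A)_k=\bigl(\prod_{i=r+1}^{k-1}{\rm bad}_i A\bigr)\,({\rm bad}\,A)_{r+1}$, so $({\rm bad}\,A)_k(X)=0$. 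For $S_k$, Definition~\ref{def:Tn} gives $S_k=({\rm bad}\,A)_k\,{\rm ad}_k A/(q^{2k}-q^{-2k})$; commuting ${\rm ad}_k A$ past $({\rm bad}\,A)_k$ shows that $S_k(X)$ is obtained by applying $({\rm ad}_k A)/(q^{2k}-q^{-2k})$ to $({\rm bad}\,A)_k(X)=0$, whence $S_k(X)=0$.

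With this in hand the vanishing is a short index count. In the first shape a surviving term needs $a\le r$ (else $S_a(X)=0$) and $b+1\le s$, i.e. $b\le s-1$ (else $({\rm bad}\,A)_{b+1}(Y)=0$), forcing $a+b\le r+s-1$, which contradicts $a+b=r+s$. In the second shape a surviving term needs $a\le r-1$ and $b\le s$, again forcing $a+b\le r+s-1<r+s$. In the third shape a surviving term needs $a\le r-1$ and $b\le s-1$, forcing $a+b\le r+s-2<r+s-1$. Thus every term on the right-hand side is zero, and we conclude $({\rm bad}\,A)_{r+s+1}(XY)=0$, as desired.

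I do not anticipate a genuine obstacle here; the heavy lifting has already been done in Proposition~\ref{prop:TXY}, so no further induction is needed. The only point requiring care is the propagation step, where one must justify interchanging ${\rm ad}_k A$ with the product $({\rm bad}\,A)_k$ and factoring $({\rm bad}\,A)_k$ through $({\rm bad}\,A)_{r+1}$, both of which rest on the pairwise commutativity of the quantum adjoints. Once the monotonicity of the vanishing is available, the remainder is purely combinatorial bookkeeping on the three sums.
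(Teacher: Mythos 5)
Your proof is correct and takes essentially the same approach as the paper: the paper's own proof is just the instruction to use the second displayed equation of Proposition~\ref{prop:TXY} (or of Proposition~\ref{prop:primever}, or either equation of Proposition~\ref{prop:BAD}), which is exactly what you do at $n=r+s$. The monotonicity-of-vanishing step (via commutativity of the quantum adjoints) and the index count are precisely the details the paper leaves to the reader, and you have filled them in correctly.
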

\begin{proof} Use the second equation in
Proposition
\ref{prop:TXY} or
\ref{prop:primever}. Alternatively
use either equation in
Proposition
\ref{prop:BAD}.
\end{proof}

\section{The subalgebra $\mathcal A^\vee$}

\noindent We continue to work with the element $A$ of
the $\mathbb F$-algebra $\mathcal A$.

\begin{definition} 
\label{def:An}
\rm For $n \in \mathbb N$ let ${\mathcal A}^{(n)}$ denote
the set of elements in $\mathcal A$ at which $({\rm bad}\,A)_{n+1}$
vanishes. Note that 
 ${\mathcal A}^{(n)}$ is a subspace of the $\mathbb F$-vector space
 $\mathcal A$.
\end{definition}

\begin{example}
\label{ex:com}
\rm 
The subspace $\mathcal A^{(0)}$ consists of the elements in
$\mathcal A$ that commute with $A$.
\end{example}

\begin{example}
\label{ex:ex1}
\rm 
The subspace $\mathcal A^{(1)}$ consists of the elements $X$ in
$\mathcal A$ such that
\begin{eqnarray*}
A^3X-\lbrack 3\rbrack_q A^2XA+ 
\lbrack 3\rbrack_q AXA^2 -XA^3 = (q^2-q^{-2})^2 (XA-AX).
\end{eqnarray*}
\end{example}

\begin{lemma} 
\label{lem:incl}
We have
$\mathcal A^{(n)} \subseteq 
\mathcal A^{(n+1)}$ for $n \in \mathbb N$.
\end{lemma}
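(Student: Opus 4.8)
The plan is to read the inclusion directly off the product structure of $(\mathrm{bad}\,A)_{n+1}$ in Definition \ref{def:bad}. First I would record the factorization
\[
(\mathrm{bad}\,A)_{n+2} = \prod_{i=0}^{n+1} \mathrm{bad}_i A = (\mathrm{bad}\,A)_{n+1}\,(\mathrm{bad}_{n+1} A),
\]
which simply separates off the last factor in the defining product. To make this useful I would then rewrite it with the factors in the opposite order, as $(\mathrm{bad}\,A)_{n+2} = (\mathrm{bad}_{n+1} A)\,(\mathrm{bad}\,A)_{n+1}$, so that $(\mathrm{bad}\,A)_{n+1}$ is applied first.

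The key point justifying this rearrangement is that the factors commute: by Definition \ref{def:bad} each $\mathrm{bad}_i A$ is a scalar polynomial in the quantum adjoint maps $\mathrm{ad}_r A$, and these pairwise commute. Hence all the operators $\mathrm{bad}_0 A, \mathrm{bad}_1 A, \ldots$ commute with one another, and in particular $(\mathrm{bad}\,A)_{n+1}$ commutes with $\mathrm{bad}_{n+1} A$. This is the one step requiring a moment's care, since in the defining product $\mathrm{bad}_{n+1} A$ sits on the right of $(\mathrm{bad}\,A)_{n+1}$, whereas I need it on the left.

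With the ordered factorization in hand the inclusion is immediate. If $X \in \mathcal A^{(n)}$, then $(\mathrm{bad}\,A)_{n+1}(X) = 0$ by Definition \ref{def:An}, and therefore
\[
(\mathrm{bad}\,A)_{n+2}(X) = (\mathrm{bad}_{n+1} A)\bigl((\mathrm{bad}\,A)_{n+1}(X)\bigr) = (\mathrm{bad}_{n+1} A)(0) = 0,
\]
so that $X \in \mathcal A^{(n+1)}$. Since $X \in \mathcal A^{(n)}$ was arbitrary, this gives $\mathcal A^{(n)} \subseteq \mathcal A^{(n+1)}$.

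I do not anticipate any real obstacle here: the entire content is the observation that $(\mathrm{bad}\,A)_{n+1}$ occurs as a factor of $(\mathrm{bad}\,A)_{n+2}$, together with the commutativity of the quantum adjoints that lets this factor act first and annihilate $X$.
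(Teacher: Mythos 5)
Your proof is correct and takes essentially the same route as the paper, whose one-line proof simply cites the product formula (\ref{eq:bad}) and Definition \ref{def:An}: the point in both cases is that $({\rm bad}\,A)_{n+1}$ is a factor of $({\rm bad}\,A)_{n+2}$. You merely make explicit the commutativity of the factors ${\rm bad}_i A$ (which the paper leaves implicit, having noted earlier that the maps ${\rm ad}_r A$ pairwise commute), and that care is well placed since the factor ${\rm bad}_{n+1}A$ must be moved to act after $({\rm bad}\,A)_{n+1}$.
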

\begin{proof} 
By (\ref{eq:bad}) and
Definition
\ref{def:An}.
\end{proof}

\begin{lemma}
\label{lem:Tdef}
Pick $n \in \mathbb N$. Then for $r>n$ the maps $S_r, S'_r$ vanish
on ${\mathcal A}^{(n)}$. Moreover on 
${\mathcal A}^{(n)}$,
\begin{eqnarray}
S = \sum_{r=0}^n S_r, \qquad \qquad 
S' = \sum_{r=0}^n S'_r.
\label{eq:Tshortsum}
\end{eqnarray}
\end{lemma}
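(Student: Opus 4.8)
The plan is to reduce the lemma to two facts already available: the inclusions $\mathcal A^{(n)} \subseteq \mathcal A^{(n+1)}$ from Lemma \ref{lem:incl}, and the commutativity of the quantum adjoints noted in Section 2, which makes the maps ${\rm bad}_i A$ commute with one another and with each ${\rm ad}_r A$. Since every ${\rm bad}_i A$ is a polynomial in the pairwise-commuting operators ${\rm ad}_s A$, so is every $({\rm bad}\,A)_r$, and all of these lie in one commutative subalgebra of $\mathrm{End}(\mathcal A)$.

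First I would prove the vanishing statement. Fix $n \in \mathbb N$ and let $r > n$. Iterating Lemma \ref{lem:incl} gives $\mathcal A^{(n)} \subseteq \mathcal A^{(r-1)}$, so for $X \in \mathcal A^{(n)}$ Definition \ref{def:An} yields $({\rm bad}\,A)_r(X) = 0$. By Definition \ref{def:Tn} we have $S_r = ({\rm bad}\,A)_r\,{\rm ad}_r A / (q^{2r}-q^{-2r})$; because ${\rm ad}_r A$ commutes with $({\rm bad}\,A)_r$, I can pass it to the outside and write $S_r(X)$ as a scalar multiple of ${\rm ad}_r A\bigl(({\rm bad}\,A)_r(X)\bigr) = {\rm ad}_r A(0) = 0$. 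The identical computation, with ${\rm ad}_r A$ replaced by ${\rm ad}_{-r} A$, shows $S'_r(X) = 0$. Hence $S_r$ and $S'_r$ vanish on $\mathcal A^{(n)}$ for every $r > n$.

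With the vanishing in hand, the truncated-sum formula is immediate. By Lemma \ref{def:fs} we have $S = \sum_{m \in \mathbb N} S_m$ and $S' = \sum_{m \in \mathbb N} S'_m$ as formal sums. For $X \in \mathcal A^{(n)}$ the previous paragraph kills every term with $m > n$, so each formal sum collapses to the finite sum $\sum_{m=0}^n S_m$ (resp. $\sum_{m=0}^n S'_m$); in passing this confirms that every element of $\mathcal A^{(n)}$ is $A$-standard, consistent with the nesting of Lemma \ref{lem:incl}.

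I do not anticipate a genuine obstacle in this argument. The only point demanding a little care is the interchange in the second paragraph: one must use that the factors of $({\rm bad}\,A)_r$ and the map ${\rm ad}_r A$ all lie in the commutative algebra generated by the quantum adjoints, so that ${\rm ad}_r A$ may be passed across $({\rm bad}\,A)_r$ to act on $({\rm bad}\,A)_r(X)$ rather than on $X$. Once this commutation is recorded, both assertions of the lemma follow directly.
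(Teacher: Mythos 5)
Your proof is correct and follows essentially the same route as the paper: both arguments rest on the fact that $({\rm bad}\,A)_{n+1}$ is a factor of $({\rm bad}\,A)_r$ for $r>n$, which in turn is a factor of $S_r$ and $S'_r$, and then invoke Lemma \ref{def:fs} to truncate the sums. Your explicit appeal to the commutativity of the quantum adjoints and to the iterated inclusions of Lemma \ref{lem:incl} simply spells out what the paper's terser ``factor'' language leaves implicit.
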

\begin{proof} 
By 
(\ref{eq:bad})
the map
 $({\rm bad}\,A)_{n+1} $ is a factor of
$({\rm bad}\,A)_r$. 
By Definition
\ref{def:Tn},
 the map $({\rm bad}\,A)_r$ is a factor of
$S_r$ and $S'_r$.
Consequently 
 $S_r$ and $S'_r$ vanish
on ${\mathcal A}^{(n)}$.
The equations
(\ref{eq:Tshortsum}) are from
Lemma
\ref{def:fs}.
\end{proof}

\noindent By Lemma
\ref{lem:Tdef},
$S$ and $S'$ are well defined $\mathbb F$-linear maps on 
$\mathcal A^{(n)}$ for all $n \in \mathbb N$.

\begin{lemma}
\label{lem:Ainv}
For $n \in \mathbb N$ the
subspace $\mathcal A^{(n)}$ is invariant under $S$ and $S'$.
\end{lemma}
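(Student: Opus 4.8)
The goal is to show that $\mathcal{A}^{(n)}$ is invariant under $S$ and $S'$. By Lemma \ref{lem:Tdef}, on $\mathcal{A}^{(n)}$ the map $S$ coincides with the finite sum $\sum_{r=0}^n S_r$, so it suffices to show that $S$ sends $\mathcal{A}^{(n)}$ into itself; that is, for $X \in \mathcal{A}^{(n)}$ I must verify that $S(X) \in \mathcal{A}^{(n)}$, meaning $({\rm bad}\,A)_{n+1}$ vanishes on $S(X)$.

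The plan is to compute $({\rm bad}\,A)_{n+1}$ applied to each summand $S_r(X)$ for $0 \le r \le n$ and argue that the result vanishes. The natural tool is the multiplicativity-type identities of Proposition \ref{prop:TXY}, but those are written for products $XY$, whereas here I need the action of $({\rm bad}\,A)_{n+1}$ on the single element $S_r(X)$. The cleanest route is therefore to reduce everything to statements about $({\rm bad}\,A)$ annihilating certain elements, using Proposition \ref{lem:rs}. Concretely, recall from Definition \ref{def:Tn} that $S_r = \frac{({\rm bad}\,A)_r\,{\rm ad}_r A}{q^{2r}-q^{-2r}}$, so $S_r(X)$ already carries the factor $({\rm bad}\,A)_r$. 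Since $X \in \mathcal{A}^{(n)}$ means $({\rm bad}\,A)_{n+1}(X)=0$, and by Lemma \ref{lem:incl} the spaces are nested, I want to track how applying ${\rm ad}_r A$ and the balanced adjoints affects membership in the $\mathcal{A}^{(m)}$ filtration.

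First I would establish the key observation that each ${\rm bad}_i A$ and each ${\rm ad}_i A$ interacts predictably with the filtration $\{\mathcal{A}^{(m)}\}$. The decisive point is that $({\rm bad}\,A)_m$ for $m \le n$ maps $\mathcal{A}^{(n)}$ into $\mathcal{A}^{(n-m)}$, or some comparable shift; this follows because $({\rm bad}\,A)_{n+1} = ({\rm bad}\,A)_m \cdot \prod_{i=m}^{n}{\rm bad}_i A$ and the factors ${\rm bad}_i A$ all commute (the quantum adjoints ${\rm ad}_r A$ pairwise commute, as noted in the statement of the main result). Using commutativity, $({\rm bad}\,A)_{n+1}(S_r(X))$ can be rearranged so that the factor $({\rm bad}\,A)_{n+1-r}$ or the full $({\rm bad}\,A)_{n+1}$ lands on $X$ directly, where it vanishes by hypothesis. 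I expect the main obstacle to be bookkeeping the exact index shifts: pinning down that $S_r$ raises the filtration level by at most one (so that $S_r(\mathcal{A}^{(n)}) \subseteq \mathcal{A}^{(n+1)}$, hence $({\rm bad}\,A)_{n+2}$ rather than $({\rm bad}\,A)_{n+1}$ annihilates it) versus the stronger statement actually needed.

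The safest and most transparent argument uses commutativity of the balanced adjoints directly: since all ${\rm ad}_i A$ commute, the operators ${\rm bad}_i A$ commute with one another and with each $S_r$ up to the scalar bookkeeping in Definition \ref{def:Tn}. Thus $({\rm bad}\,A)_{n+1}\,S_r = \frac{1}{q^{2r}-q^{-2r}}\,({\rm bad}\,A)_{n+1}\,({\rm bad}\,A)_r\,{\rm ad}_r A$, and I can commute ${\rm ad}_r A$ to the left past the commuting balanced factors, collecting $({\rm bad}\,A)_{n+1}$ acting last on $X$. Since $({\rm bad}\,A)_{n+1}(X)=0$ and the remaining operators are just scalars times commuting maps applied afterward, the whole expression vanishes. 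Therefore $({\rm bad}\,A)_{n+1}(S_r(X))=0$ for each $r$, so $S(X)=\sum_{r=0}^n S_r(X) \in \mathcal{A}^{(n)}$, and the identical argument with $S'_r$ in place of $S_r$ gives invariance under $S'$. The one place demanding care is justifying the commutation rigorously from the definition of $({\rm bad}\,A)_n$ as an ordered product and confirming that no noncommuting multiplication by $A$ sneaks in — but since every ${\rm bad}_i A$ is built solely from commuting quantum adjoints of $A$, this reduces to the already-noted commutativity of the family $\{{\rm ad}_r A\}$.
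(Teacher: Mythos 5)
Your final argument is correct and is exactly the paper's proof: since every operator involved ($({\rm bad}\,A)_{n+1}$, $S_r$, $S'_r$) lies in the commutative algebra generated by the pairwise-commuting quantum adjoints $\{{\rm ad}_i A\}$, one has $({\rm bad}\,A)_{n+1}\,S_r(X)=S_r\bigl(({\rm bad}\,A)_{n+1}(X)\bigr)=0$ for $X\in\mathcal A^{(n)}$, and likewise for $S'_r$. The exploratory middle paragraph about filtration shifts is unnecessary (and its index bookkeeping is dubious), but the commutation argument you ultimately settle on is precisely what the paper does.
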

\begin{proof} The map $({\rm bad}\,A)_{n+1}$ commutes with
 $S_r$ and $S'_r$ for $r \in \mathbb N$.
\end{proof}

\begin{lemma}
\label{lem:wdi}
For $n \in \mathbb N$ the maps
$S: \mathcal A^{(n)}\to \mathcal A^{(n)}$
and
$S': \mathcal A^{(n)}\to \mathcal A^{(n)}$
are inverses.
\end{lemma}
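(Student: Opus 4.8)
The plan is to show that the composite $S' \circ S$ acts as the identity on $\mathcal A^{(n)}$; by symmetry (or by applying the same argument with the roles of $S$ and $S'$ interchanged) the composite $S \circ S'$ will also be the identity, and since both maps preserve $\mathcal A^{(n)}$ by Lemma~\ref{lem:Ainv} and are well defined on it by Lemma~\ref{lem:Tdef}, this establishes that they are mutually inverse $\mathbb F$-linear bijections of $\mathcal A^{(n)}$. The essential point is that Proposition~\ref{prop:TTp} already computes the product $\bigl(\sum_{i=0}^n S_i\bigr)\bigl(\sum_{j=0}^n S'_j\bigr)$ exactly, so the whole task reduces to showing that the error term on the right-hand side of that proposition vanishes when evaluated on elements of $\mathcal A^{(n)}$.

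First I would fix $X \in \mathcal A^{(n)}$. By Lemma~\ref{lem:Tdef}, on $\mathcal A^{(n)}$ the infinite sums truncate, giving
\begin{eqnarray*}
S(X) = \sum_{r=0}^n S_r(X), \qquad \qquad S'(X) = \sum_{r=0}^n S'_r(X),
\end{eqnarray*}
and moreover $S_r, S'_r$ vanish on $\mathcal A^{(n)}$ for all $r>n$. The crux is that $S'$ also preserves $\mathcal A^{(n)}$ (Lemma~\ref{lem:Ainv}), so $S(X) \in \mathcal A^{(n)}$ too, and hence $S'$ applied to $S(X)$ is again the truncated sum $\sum_{j=0}^n S'_j$. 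Consequently
\begin{eqnarray*}
S'\bigl(S(X)\bigr) = \Biggl(\sum_{j=0}^n S'_j\Biggr)\Biggl(\sum_{i=0}^n S_i\Biggr)(X).
\end{eqnarray*}
I would then invoke Proposition~\ref{prop:TTp} (taking care that the order of the two factors there matches what I need; the operators $\{S_i\}$ and $\{S'_j\}$ are built from commuting quantum adjoints, so the product formula applies in the required order) to rewrite this as $X$ plus the term $({\rm bad}\,A)_{n+1}$ applied to something.

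The key step is then to observe that the entire correction term in Proposition~\ref{prop:TTp} has $({\rm bad}\,A)_{n+1}$ as a left factor, and by the very Definition~\ref{def:An} of $\mathcal A^{(n)}$ we have $({\rm bad}\,A)_{n+1}(X)=0$. One must be slightly careful about which argument $({\rm bad}\,A)_{n+1}$ acts on: in the proposition the factor $({\rm bad}\,A)_{n+1}$ stands outermost, so after all the inner operators $({\rm bad}\,A)_{r+1}$ have acted on $X$, the result still lies in the image of $\mathcal A^{(n)}$ under composites of $\{{\rm bad}_i A\}$, and since these all commute (Definition~\ref{def:bad}, built from commuting quantum adjoints), $({\rm bad}\,A)_{n+1}$ can be pulled through to act directly on $X$, yielding $0$. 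Hence $S'(S(X))=X$ for all $X \in \mathcal A^{(n)}$.

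The main obstacle I anticipate is bookkeeping the order of composition and confirming that the commutativity of the $\{{\rm bad}_i A\}$ lets me move $({\rm bad}\,A)_{n+1}$ past the inner factors so that it annihilates $X$; this is where I would want to double-check that Proposition~\ref{prop:TTp} is being applied with the factors in the correct left-right order, and that the symmetric computation (with $q$ replaced by $q^{-1}$, which swaps $S \leftrightarrow S'$ up to the identifications in Definitions~\ref{def:bad} and \ref{def:Tn}) indeed yields $S(S'(X))=X$ by the same vanishing argument. Everything else is a direct substitution, so once this ordering point is settled the conclusion that $S$ and $S'$ are inverse bijections of $\mathcal A^{(n)}$ follows immediately.
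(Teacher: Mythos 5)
Your proposal is correct and takes essentially the same approach as the paper: the paper's proof of Lemma~\ref{lem:wdi} is simply ``By Proposition~\ref{prop:TTp}'', and your argument fills in exactly the routine details that citation suppresses (truncation on $\mathcal A^{(n)}$ via Lemma~\ref{lem:Tdef}, invariance via Lemma~\ref{lem:Ainv}, and the fact that all the operators involved are polynomials in the commuting family $\{{\rm ad}_r A\}_{r\in\mathbb Z}$, so the error term $({\rm bad}\,A)_{n+1}(\cdots)$ annihilates $\mathcal A^{(n)}$ and the two orders of composition agree). Your attention to the factor ordering and to the $q\mapsto q^{-1}$ symmetry swapping $S_i\leftrightarrow S'_i$ is sound and matches the mechanism the paper uses elsewhere (e.g.\ in the proof of Proposition~\ref{prop:primever}).
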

\begin{proof} By Proposition
\ref{prop:TTp}.
\end{proof}

\begin{example}
\label{eq:ex0}
The maps $S$ and  $S'$ fix everything in 
$\mathcal A^{(0)}$.
\end{example}
\begin{proof} On ${\mathcal A}^{(0)}$ we have 
$S=S_0=I$
and $S'=S'_0=I$.
\end{proof}

\begin{example}
\label{eq:ex1}
Pick $X \in \mathcal A^{(1)}$.
Then $S$ sends
\begin{eqnarray*}
X \mapsto 
 X + \frac{qA^2X-(q+q^{-1})AXA+q^{-1}XA^2}{(q-q^{-1})(q^2-q^{-2})},
 \end{eqnarray*}
 and $S'$ sends
\begin{eqnarray*}
X \mapsto 
 X + \frac{q^{-1}A^2X-(q+q^{-1})AXA+qXA^2}{(q-q^{-1})(q^2-q^{-2})}.
 \end{eqnarray*}
\end{example}
\begin{proof} On ${\mathcal A}^{(1)}$ we have
$S=S_0+S_1$ and
$S'=S'_0+S'_1$.
\end{proof}

\begin{lemma}
\label{lem:mult}
We have ${\mathcal A}^{(r)} {\mathcal A}^{(s)} \subseteq
{\mathcal A}^{(r+s)}$
for $r,s\in \mathbb N$.
\end{lemma}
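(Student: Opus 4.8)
The plan is to reduce the asserted inclusion to an assertion about the generating products of $\mathcal A^{(r)}\mathcal A^{(s)}$ and then quote Proposition \ref{lem:rs}, since all of the substantive work has already been done there (via the product formula of Proposition \ref{prop:TXY}).

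First I would recall from Definition \ref{def:An} that, for each $n \in \mathbb N$, the subspace $\mathcal A^{(n)}$ is precisely the set of elements of $\mathcal A$ annihilated by $({\rm bad}\,A)_{n+1}$; in particular $\mathcal A^{(r+s)}$ is an $\mathbb F$-subspace of $\mathcal A$ and hence is closed under addition. By definition the product $\mathcal A^{(r)}\mathcal A^{(s)}$ is the $\mathbb F$-span of the elements $XY$ with $X \in \mathcal A^{(r)}$ and $Y \in \mathcal A^{(s)}$. Because $\mathcal A^{(r+s)}$ is closed under addition, it therefore suffices to show $XY \in \mathcal A^{(r+s)}$ for each such $X$ and $Y$.

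Fixing such $X$ and $Y$, Definition \ref{def:An} gives $({\rm bad}\,A)_{r+1}(X)=0$ and $({\rm bad}\,A)_{s+1}(Y)=0$. These are exactly the hypotheses of Proposition \ref{lem:rs}, whose conclusion yields $({\rm bad}\,A)_{r+s+1}(XY)=0$. Invoking Definition \ref{def:An} once more, this says precisely that $XY \in \mathcal A^{(r+s)}$, which completes the argument.

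I do not anticipate a genuine obstacle here: the only point requiring care is the passage from the spanning products $XY$ to arbitrary elements of $\mathcal A^{(r)}\mathcal A^{(s)}$, and this is immediate from the fact that $\mathcal A^{(r+s)}$ is a subspace. The analytic heart of the statement lives entirely in Proposition \ref{lem:rs}.
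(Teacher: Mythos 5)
Your proof is correct and is essentially the paper's own argument: the paper proves Lemma \ref{lem:mult} by citing Proposition \ref{lem:rs}, exactly as you do, with your extra remarks about spanning and closure under addition being the routine details left implicit there.
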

\begin{proof}
By Proposition
\ref{lem:rs}.
\end{proof}

\begin{definition}
\label{def:Av}
\rm Define
$\mathcal A^\vee = 
\cup_{n \in \mathbb N} {\mathcal A}^{(n)}$.
\end{definition}

\begin{lemma} 
\label{lem:subalgA}
The set
${\mathcal A}^\vee$ is a
subalgebra of $\mathcal A$ that contains $A$.
\end{lemma}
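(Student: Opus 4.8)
The plan is to prove Lemma~\ref{lem:subalgA} by verifying the three defining properties of a subalgebra: that $\mathcal A^\vee$ contains $A$ (equivalently, contains $1$ and is closed under the algebra operations), that it is closed under $\mathbb F$-linear combinations, and that it is closed under multiplication. The key structural fact already in hand is the nested filtration $\mathcal A^{(0)} \subseteq \mathcal A^{(1)} \subseteq \cdots$ from Lemma~\ref{lem:incl}, together with the multiplicative compatibility $\mathcal A^{(r)}\mathcal A^{(s)} \subseteq \mathcal A^{(r+s)}$ from Lemma~\ref{lem:mult}. Since $\mathcal A^\vee$ is defined as the union $\cup_{n\in\mathbb N}\mathcal A^{(n)}$ of this increasing chain of subspaces, most of the work reduces to observing that a nested union of subspaces is automatically closed under the vector-space operations.

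First I would establish that $\mathcal A^\vee$ is a subspace. Given $X, Y \in \mathcal A^\vee$, by definition there exist $r, s \in \mathbb N$ with $X \in \mathcal A^{(r)}$ and $Y \in \mathcal A^{(s)}$. Setting $m = \max(r,s)$ and using the inclusions of Lemma~\ref{lem:incl}, both $X$ and $Y$ lie in the single subspace $\mathcal A^{(m)}$; hence any $\mathbb F$-linear combination $\alpha X + \beta Y$ lies in $\mathcal A^{(m)} \subseteq \mathcal A^\vee$. This uses only that each $\mathcal A^{(n)}$ is a subspace (Definition~\ref{def:An}) and that the chain is nested. In particular $0 \in \mathcal A^{(0)} \subseteq \mathcal A^\vee$.

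Next I would verify closure under multiplication and the presence of $1$ and $A$. For closure, take $X \in \mathcal A^{(r)}$ and $Y \in \mathcal A^{(s)}$ as above; then Lemma~\ref{lem:mult} gives $XY \in \mathcal A^{(r+s)} \subseteq \mathcal A^\vee$. For the unit, note that $1$ commutes with $A$, so by Example~\ref{ex:com} we have $1 \in \mathcal A^{(0)} \subseteq \mathcal A^\vee$, which shows $\mathcal A^\vee$ is a subalgebra with the same $1$ as $\mathcal A$. Finally, $A$ itself commutes with $A$, so again by Example~\ref{ex:com} we have $A \in \mathcal A^{(0)} \subseteq \mathcal A^\vee$, giving the last assertion.

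I do not anticipate a genuine obstacle here: the statement is essentially a formal consequence of the two preceding lemmas plus the elementary fact that an increasing union of subspaces is a subspace. The only point requiring a small amount of care is the reduction via $m = \max(r,s)$, which is exactly where the nesting of Lemma~\ref{lem:incl} is used to place two a priori differently-indexed elements into a common $\mathcal A^{(m)}$; everything else is immediate from Definition~\ref{def:Av}, Lemma~\ref{lem:mult}, and Example~\ref{ex:com}.
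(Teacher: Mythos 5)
Your proposal is correct and follows essentially the same route as the paper's own proof: both establish that $\mathcal A^\vee$ is a subspace via Definition~\ref{def:An} and the nesting in Lemma~\ref{lem:incl}, obtain $1$ and $A$ from Example~\ref{ex:com}, and get closure under multiplication from Lemma~\ref{lem:mult}. The only difference is that you spell out the $\max(r,s)$ reduction that the paper leaves implicit.
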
 
\begin{proof} By 
Definition
\ref{def:An}
and
Lemma 
\ref{lem:incl},
$\mathcal A^\vee$ is a
subspace of the $\mathbb F$-vector space $\mathcal A$.
By
Example
\ref{ex:com},
the subspace $\mathcal A^\vee$ contains 1 and $A$.
By Lemma
\ref{lem:mult},
the subspace 
$\mathcal A^\vee$
is closed under multiplication.
The result follows.
\end{proof}

\noindent By Definition
\ref{def:Av} along with
Lemma
\ref{lem:Ainv}
and the comment above it,
we obtain $\mathbb F$-linear maps $S:
{\mathcal A}^\vee \to
{\mathcal A}^\vee$
and
$S':
{\mathcal A}^\vee \to
{\mathcal A}^\vee$.

\begin{proposition}
\label{prop:TTpinv}
The maps $S$ and $S'$ act on
the algebra ${\mathcal A}^\vee$ as an automorphism, and
these automorphisms
 are inverses.
\end{proposition}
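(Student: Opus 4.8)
The plan is to establish three things: that $S$ and $S'$ are algebra homomorphisms of $\mathcal{A}^\vee$, that each is a bijection, and that they are mutually inverse. The latter two assertions come almost for free from the work already done on the subspaces $\mathcal{A}^{(n)}$, so the real content is multiplicativity, which I would extract from Proposition \ref{prop:TXY} and Proposition \ref{prop:primever}.

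First I would dispose of the inverse statement. Recall from the comment following Lemma \ref{lem:subalgA} that $S$ and $S'$ are well-defined $\mathbb{F}$-linear maps on $\mathcal{A}^\vee$. Given $X \in \mathcal{A}^\vee = \cup_n \mathcal{A}^{(n)}$, choose $n$ with $X \in \mathcal{A}^{(n)}$. By Lemma \ref{lem:Tdef} the value $S(X)$ equals $\sum_{r=0}^n S_r(X)$, which is the value of the restriction of $S$ to $\mathcal{A}^{(n)}$, and by Lemma \ref{lem:Ainv} this value again lies in $\mathcal{A}^{(n)}$; the same holds for $S'$. Lemma \ref{lem:wdi} then gives $S'(S(X)) = X$ and $S(S'(X)) = X$. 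As $X$ was arbitrary, $SS' = S'S = I$ on $\mathcal{A}^\vee$, so $S$ and $S'$ are mutually inverse bijections.

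For multiplicativity, take arbitrary $X, Y \in \mathcal{A}^\vee$ and choose $r_0, s_0 \in \mathbb{N}$ with $X \in \mathcal{A}^{(r_0)}$ and $Y \in \mathcal{A}^{(s_0)}$. By Lemma \ref{lem:mult} we have $XY \in \mathcal{A}^{(r_0+s_0)}$, so Lemma \ref{lem:Tdef} gives $S(XY) = \sum_{i=0}^{r_0+s_0} S_i(XY)$, which I would evaluate through Proposition \ref{prop:TXY} at $n = r_0+s_0$. Two observations then finish the job. First, the correction term $\sum_{r+s = r_0+s_0-1} \bigl(({\rm bad}\,A)_{r+1}(X)\bigr)\bigl(({\rm bad}\,A)_{s+1}(Y)\bigr) q^{r-s}$ vanishes termwise: whenever $r+s = r_0+s_0-1$ we must have $r \ge r_0$ or $s \ge s_0$, and then $({\rm bad}\,A)_{r+1}(X) = 0$ or $({\rm bad}\,A)_{s+1}(Y) = 0$ respectively, by Definition \ref{def:An} and Lemma \ref{lem:incl}. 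Second, in the remaining double sum $\sum_{r+s \le r_0+s_0} S_r(X) S_s(Y)$, Lemma \ref{lem:Tdef} annihilates every term with $r > r_0$ or $s > s_0$; the surviving terms all satisfy $r+s \le r_0+s_0$ automatically, so the sum collapses to $\bigl(\sum_{r=0}^{r_0} S_r(X)\bigr)\bigl(\sum_{s=0}^{s_0} S_s(Y)\bigr) = S(X) S(Y)$. Hence $S(XY) = S(X) S(Y)$, and the identical argument applied to Proposition \ref{prop:primever} yields $S'(XY) = S'(X) S'(Y)$. Since $1$ commutes with $A$ we have $1 \in \mathcal{A}^{(0)}$ by Example \ref{ex:com}, whence $S(1) = 1 = S'(1)$ by Example \ref{eq:ex0}; thus $S$ and $S'$ are algebra homomorphisms, and together with the previous paragraph they are mutually inverse automorphisms.

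The step I expect to be the main obstacle is the multiplicativity bookkeeping — specifically, verifying that the correction sum is annihilated and that the surviving double sum factors exactly. It all reduces to lining up the index ranges $r \le r_0$, $s \le s_0$ against the constraints $r+s \le r_0+s_0$ and $r+s = r_0+s_0-1$, which is the one place where a careless off-by-one would break the argument; the remainder is assembly of results already in hand.
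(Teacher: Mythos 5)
Your proposal is correct and takes essentially the same approach as the paper: the paper's proof consists of citing Propositions \ref{prop:TXY} and \ref{prop:primever} for the homomorphism property and Lemma \ref{lem:wdi} for the inverse assertion, and your argument is precisely that proof with the index bookkeeping (vanishing of the correction sum via Lemma \ref{lem:incl}, and the collapse of the double sum via Lemma \ref{lem:Tdef}) written out in full.
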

\begin{proof} To get the first assertion use
Propositions
\ref{prop:TXY}, \ref{prop:primever}.
The last assertion follows from
Lemma
\ref{lem:wdi}.
\end{proof}

\section{$A$-Standard algebras and their Lusztig automorphism}

We continue to work with the element $A$ of the
$\mathbb F$-algebra $\mathcal A$.

\begin{definition}
\label{def:st}
\rm An element $X \in \mathcal A$ is called 
{\it $A$-standard} 
whenever there  exists a positive integer
$n$ such that
$({\rm bad}\,A)_n (X)=0$. Note that ${\mathcal A}^\vee$ consists
of the $A$-standard elements of $\mathcal A$.
\end{definition}

\begin{definition} 
\label{def:st2}
\rm The algebra $\mathcal A$ is called
{\it $A$-standard} whenever each element of $\mathcal A$
is $A$-standard.
\end{definition}

\begin{lemma} 
\label{lem:three}
The following {\rm (i)--(iii)} are equivalent:
\begin{enumerate}
\item[\rm (i)] 
 $\mathcal A$ is $A$-standard;
\item[\rm (ii)] 
$\mathcal A^\vee=\mathcal A$;
\item[\rm (iii)] 
$\mathcal A$ has a generating set whose elements are $A$-standard.
\end{enumerate}
\end{lemma}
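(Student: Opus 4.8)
The plan is to exploit the observation recorded in Definition \ref{def:st}, namely that $\mathcal A^\vee$ is precisely the set of $A$-standard elements of $\mathcal A$. Once this identification is in hand, most of the lemma reduces to a formal restatement, and the only substantive input required is Lemma \ref{lem:subalgA}, which asserts that $\mathcal A^\vee$ is a subalgebra of $\mathcal A$. I would prove the cycle by first settling the easy equivalences and then the one direction with genuine content.

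First I would dispose of the equivalence (i)\,$\Leftrightarrow$\,(ii). By Definition \ref{def:st2}, statement (i) says that every element of $\mathcal A$ is $A$-standard; by the note in Definition \ref{def:st}, the set of $A$-standard elements is exactly $\mathcal A^\vee$. Hence (i) holds if and only if $\mathcal A \subseteq \mathcal A^\vee$, and since $\mathcal A^\vee \subseteq \mathcal A$ by Definition \ref{def:Av}, this is the same as $\mathcal A^\vee = \mathcal A$, which is (ii); no computation is needed. The implication (i)\,$\Rightarrow$\,(iii) is equally immediate: $\mathcal A$ is a generating set for itself, and if (i) holds then every element of $\mathcal A$, in particular every element of this generating set, is $A$-standard.

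The one direction carrying real content is (iii)\,$\Rightarrow$\,(ii). Here I would argue as follows. Suppose $\mathcal A$ has a generating set $G$ whose elements are $A$-standard. By the note in Definition \ref{def:st} this means $G \subseteq \mathcal A^\vee$. By Lemma \ref{lem:subalgA}, $\mathcal A^\vee$ is a subalgebra of $\mathcal A$; therefore it contains the subalgebra of $\mathcal A$ generated by $G$, which is all of $\mathcal A$. Thus $\mathcal A = \mathcal A^\vee$, which is (ii), and the cycle is complete.

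I do not expect a serious obstacle at this stage, since all of the real work has already been carried out upstream: the multiplicative closure $\mathcal A^{(r)} \mathcal A^{(s)} \subseteq \mathcal A^{(r+s)}$ (Lemma \ref{lem:mult}, via Proposition \ref{lem:rs}) is exactly what makes $\mathcal A^\vee$ a subalgebra in Lemma \ref{lem:subalgA}. The only point deserving a moment's care is that ``generating set'' must be read in the sense of unital algebras, so that the subalgebra generated by $G$ genuinely equals $\mathcal A$; this is harmless because $\mathcal A^\vee$ contains $1$, indeed $\mathcal A^{(0)}$ already contains every element commuting with $A$ by Example \ref{ex:com}.
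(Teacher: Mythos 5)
Your proof is correct and follows essentially the same route as the paper: the equivalence (i)\,$\Leftrightarrow$\,(ii) and the implication (i)\,$\Rightarrow$\,(iii) are read off from Definitions \ref{def:st}, \ref{def:st2}, and the substantive direction (iii)\,$\Rightarrow$\,(ii) is settled by invoking Lemma \ref{lem:subalgA} to see that the subalgebra $\mathcal A^\vee$, containing an $A$-standard generating set, must be all of $\mathcal A$. Your added remark about unital generation being harmless (since $1 \in \mathcal A^{(0)}$) is a fine point the paper leaves implicit, but it does not change the argument.
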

\begin{proof} ${{\rm (i)} \Leftrightarrow {\rm (ii)}}$
By Definitions \ref{def:st},
\ref{def:st2}.
\\
\noindent 
${{\rm (i)} \Rightarrow {\rm (iii)}}$ Clear.
\\
${{\rm (iii)} \Rightarrow {\rm (ii)}}$
By Lemma
\ref{lem:subalgA}
 $\mathcal A^\vee$ is a subalgebra of $\mathcal A$.
By Definition
\ref{def:st}
 $\mathcal A^\vee$ contains
each $A$-standard element of $\mathcal A$.
The result follows.
\end{proof}

\begin{theorem} 
\label{thm:standard}
Assume that $\mathcal A$ is $A$-standard.
Then $S$ and $S'$ act on
$\mathcal A$ as an automorphism, and these
 automorphisms
 are inverses.
\end{theorem}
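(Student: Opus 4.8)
The plan is to deduce this theorem almost immediately from the machinery already assembled, since all of the substantive content lives in the results about $\mathcal A^\vee$. The only genuine task is to identify $\mathcal A^\vee$ with $\mathcal A$ under the standing hypothesis and then invoke Proposition \ref{prop:TTpinv}.

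First I would record that the hypothesis ``$\mathcal A$ is $A$-standard'' is, by the equivalence ${\rm (i)}\Leftrightarrow{\rm (ii)}$ of Lemma \ref{lem:three}, precisely the statement $\mathcal A^\vee=\mathcal A$. This is the hinge of the argument: it converts a condition phrased in terms of individual elements (for each $X$ some $({\rm bad}\,A)_n$ annihilates $X$) into the set-theoretic equality needed to transport conclusions about $\mathcal A^\vee$ back to all of $\mathcal A$.

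Next I would note that $S$ and $S'$ are genuinely well-defined $\mathbb F$-linear maps on all of $\mathcal A$, though this has in fact already been arranged. By Lemma \ref{lem:Tdef} and the remark following it, $S$ and $S'$ are well defined on each $\mathcal A^{(n)}$, where they truncate to the finite sums $\sum_{r=0}^n S_r$ and $\sum_{r=0}^n S'_r$; hence they are well defined on the union $\mathcal A^\vee$ by Definition \ref{def:Av}. With $\mathcal A^\vee=\mathcal A$ this covers every element of $\mathcal A$, so the formal sums $S$ and $S'$ really do define maps $\mathcal A\to\mathcal A$.

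Finally, Proposition \ref{prop:TTpinv} asserts exactly that $S$ and $S'$ act on $\mathcal A^\vee$ as mutually inverse automorphisms; substituting $\mathcal A^\vee=\mathcal A$ yields the theorem verbatim. Thus there is essentially no obstacle at this stage, because all of the difficulty was front-loaded: the homomorphism property rests on Propositions \ref{prop:TXY} and \ref{prop:primever} (the telescoping ``$S(n)$'' and ``$B(n)$'' identities), the mutual-inverse property on Proposition \ref{prop:TTp}, and the closure of $\mathcal A^\vee$ under multiplication on Lemma \ref{lem:mult}. The present statement is simply the clean global repackaging of those facts, which were first established locally on each $\mathcal A^{(n)}$ and then propagated to the union $\mathcal A^\vee$.
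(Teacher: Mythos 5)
Your proposal is correct and follows the paper's own route exactly: identify $\mathcal A^\vee=\mathcal A$ via Lemma \ref{lem:three} and apply Proposition \ref{prop:TTpinv}. The paper's proof is just the one-line version of this, so your write-up simply makes explicit the well-definedness bookkeeping that the paper leaves implicit.
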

\begin{proof} Apply Proposition
\ref{prop:TTpinv}
to the algebra $\mathcal A^\vee = \mathcal A$.
\end{proof}

\noindent Recall the $q$-Onsager algebra $\mathcal O_q$
and its generators $A$, $B$.
\begin{proposition} 
\label{prop:LS} For $\mathcal O_q$ the following
{\rm (i)--(iv)} hold:
\begin{enumerate}
\item[\rm (i)]  $A \in \mathcal O_q^{(0)}$ and
 $B \in \mathcal O_q^{(1)}$;
\item[\rm (ii)] 
the algebra $\mathcal O_q$
is $A$-standard;
\item[\rm (iii)] $S$ sends
\begin{eqnarray*}
&&A\mapsto A, \qquad \qquad
B \mapsto B + \frac{qA^2B-(q+q^{-1})ABA+q^{-1}BA^2}{(q-q^{-1})(q^2-q^{-2})}
\end{eqnarray*}
and 
$S'$ sends
\begin{eqnarray*}
A\mapsto A, \qquad \qquad
B\mapsto B + \frac{q^{-1}A^2B-(q+q^{-1})ABA+qBA^2}{(q-q^{-1})(q^2-q^{-2})};
\end{eqnarray*}
\item[\rm (iv)] 
 $S=L$ and $S'=L^{-1}$.
\end{enumerate}
\end{proposition}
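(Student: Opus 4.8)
The plan is to dispatch (i)--(iv) in order, observing that (i)--(iii) reduce to matching the ambient definitions against the defining relations of $\mathcal{O}_q$, while (iv) is a uniqueness-of-automorphism argument. All the analytic work is already contained in the earlier results, so the proposition is essentially a matter of bookkeeping.

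For (i), I would first note that $A$ commutes with itself, so by Example \ref{ex:com} we have $A \in \mathcal{O}_q^{(0)}$. For the claim $B \in \mathcal{O}_q^{(1)}$, the key observation is that the relation characterizing $\mathcal{O}_q^{(1)}$ in Example \ref{ex:ex1} is, upon setting $X=B$, literally the first $q$-Dolan/Grady relation (\ref{eq:dg1}). Since $B$ satisfies (\ref{eq:dg1}) by the very definition of $\mathcal{O}_q$, this yields $B \in \mathcal{O}_q^{(1)}$. For (ii), I would then use that $A,B$ generate $\mathcal{O}_q$ and that, by (i) together with Definition \ref{def:st} and Lemma \ref{lem:subalgA}, both generators lie in $\mathcal{O}_q^\vee$ and are therefore $A$-standard; the implication (iii)$\Rightarrow$(i) of Lemma \ref{lem:three} then forces $\mathcal{O}_q$ to be $A$-standard. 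For (iii), I would simply specialize the earlier examples: $A \in \mathcal{O}_q^{(0)}$ gives $S(A)=S'(A)=A$ by Example \ref{eq:ex0}, and $B \in \mathcal{O}_q^{(1)}$ gives the displayed images of $B$ under $S$ and $S'$ by Example \ref{eq:ex1}.

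For (iv), by (ii) and Theorem \ref{thm:standard} the maps $S,S'$ are mutually inverse automorphisms of $\mathcal{O}_q$. Comparing the images computed in (iii) with the Baseilhac--Kolb formulas (\ref{eq:bk1}), (\ref{eq:bk2}), the automorphisms $S$ and $L$ agree on the generating set $\{A,B\}$, as do $S'$ and $L^{-1}$; since an algebra automorphism is determined by its values on a generating set, we get $S=L$ and $S'=L^{-1}$. I expect this final step to be the only conceptually substantive point: it is precisely here that the construction doubles as a computer-free existence proof for $L$, because we have exhibited an automorphism ($S$) acting as (\ref{eq:bk1}), so the Baseilhac--Kolb prescription is guaranteed a priori to extend to an automorphism rather than merely being postulated to do so.
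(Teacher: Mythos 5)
Your proposal is correct and follows essentially the same route as the paper: (i) and (iii) by matching the defining relation (\ref{eq:dg1}) against Examples \ref{ex:com}, \ref{ex:ex1}, \ref{eq:ex0}, \ref{eq:ex1}, (ii) via Lemma \ref{lem:three}, and (iv) by comparing with (\ref{eq:bk1}), (\ref{eq:bk2}). The only difference is that you make explicit what the paper leaves implicit in (iv) --- that automorphisms agreeing on the generators $A,B$ must coincide --- and your closing remark about the computer-free existence proof matches the paper's own comment immediately following the proposition.
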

\begin{proof} (i) 
We have $A \in 
 \mathcal O_q^{(0)}$ by
Example
\ref{ex:com},
and 
$B \in 
 \mathcal O_q^{(1)}$ by
Example
\ref{ex:ex1}.
\\
\noindent (ii) The generators 
$A$, $B$ are 
$A$-standard by (i) above.
Now $\mathcal O_q$
is $A$-standard by Lemma
\ref{lem:three}. 
\\
\noindent (iii) By (i) above and
Examples 
\ref{eq:ex0},
\ref{eq:ex1}.
\\
\noindent (iv)
Compare (\ref{eq:bk1}),
 (\ref{eq:bk2})
with (iii) above.
\end{proof}

\noindent Theorem
\ref{conj:T}
follows from Theorem
\ref{thm:standard}
and
Proposition
\ref{prop:LS}.
Combining
 Theorem
\ref{thm:standard}
and
Proposition
\ref{prop:LS}(i)--(iii), we get a computer-free proof that
there exists an automorphism 
$L$ of
$\mathcal O_q$ that satisfies
 (\ref{eq:bk1}),
 (\ref{eq:bk2}).
\medskip

\noindent We return our attention to the algebra $\mathcal A$, and the element
$A \in \mathcal A$. The following definition is motivated by
Proposition 
\ref{prop:LS}.
\begin{definition}
\label{def:LA} \rm
Assume that  $\mathcal A$ is $A$-standard,
and consider its automorphism $S$ from Theorem
\ref{thm:standard}. We call $S$ the {\it Lusztig automorphism
of $\mathcal A$ that corresponds to $A$}.
\end{definition}

\section{The higher order $q$-Dolan/Grady relations}

\noindent In this section we establish the 
higher order $q$-Dolan/Grady relations conjectured by Baseilhac
and Vu \cite{BVu}.
Let $\mathcal A$ denote an $\mathbb F$-algebra
and fix $A \in \mathcal A$. 

\begin{theorem} Given $X \in \mathcal A$ such that
\begin{eqnarray*}
A^3X-\lbrack 3\rbrack_q A^2XA+ 
\lbrack 3\rbrack_q AXA^2 -XA^3 = (q^2-q^{-2})^2 (XA-AX).
\end{eqnarray*}
Then 
\begin{eqnarray*}
({\rm bad}\, A)_{r+1} (X^r)=0 \qquad \qquad r\geq 1.
\end{eqnarray*}
We are using the notation
(\ref{eq:bad}).
\end{theorem}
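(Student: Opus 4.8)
The plan is to recognize this ``higher order'' relation as an immediate consequence of the multiplicative structure already developed for the subspaces $\mathcal A^{(n)}$, so that essentially no new computation is needed. First I would translate the statement into the language of the previous section. By Example~\ref{ex:ex1}, the hypothesis placed on $X$ is precisely the assertion $X \in \mathcal A^{(1)}$; and by Definition~\ref{def:An}, the desired conclusion $({\rm bad}\,A)_{r+1}(X^r)=0$ is precisely the assertion $X^r \in \mathcal A^{(r)}$. Hence the theorem is equivalent to the claim that $X \in \mathcal A^{(1)}$ implies $X^r \in \mathcal A^{(r)}$ for every $r \geq 1$.

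I would prove this claim by induction on $r$. The base case $r=1$ is exactly the hypothesis. For the inductive step I would assume $X^{r} \in \mathcal A^{(r)}$, factor $X^{r+1} = X^r\,X$, and apply Lemma~\ref{lem:mult} with the index pair $(r,1)$ to obtain
\[
X^{r+1} = X^r\,X \in \mathcal A^{(r)}\,\mathcal A^{(1)} \subseteq \mathcal A^{(r+1)}.
\]
Equivalently, one may invoke Proposition~\ref{lem:rs} directly, taking its $X$ to be our $X^r$ (so that $({\rm bad}\,A)_{r+1}(X^r)=0$) and its $Y$ to be our $X$ (so that $({\rm bad}\,A)_{2}(X)=0$); the conclusion then reads $({\rm bad}\,A)_{r+2}(X^{r+1})=0$, which is the statement $X^{r+1}\in\mathcal A^{(r+1)}$.

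There is no genuine obstacle remaining: the real work has already been carried out in Proposition~\ref{prop:TXY} and its corollary Proposition~\ref{lem:rs}, where the expansion of $({\rm bad}\,A)_{n+1}(XY)$ in terms of the maps $S_r$ and $({\rm bad}\,A)_{s+1}$ shows that the vanishing of the relevant factors on $X$ and on $Y$ propagates to $XY$ with the levels adding. The only point worth double-checking is the index bookkeeping, namely that the factor annihilating $X$ at level $1$ and the factor annihilating $X^r$ at level $r$ combine to the factor annihilating $X^{r+1}$ at level $r+1$; this is exactly the additivity $r+s$ recorded in Proposition~\ref{lem:rs}. I would close by remarking that this is the promised higher order $q$-Dolan/Grady relation, which specializes to the defining relation of Example~\ref{ex:ex1} at $r=1$.
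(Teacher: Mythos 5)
Your proposal is correct and follows essentially the same route as the paper: identify the hypothesis with $X \in \mathcal A^{(1)}$ via Example~\ref{ex:ex1}, conclude $X^r \in \mathcal A^{(r)}$ via Lemma~\ref{lem:mult} (resp.\ Proposition~\ref{lem:rs}), and translate back using Definition~\ref{def:An}. The only difference is that you make explicit the trivial induction on $r$ that the paper leaves implicit in its one-line application of Lemma~\ref{lem:mult}.
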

\begin{proof} By Example
\ref{ex:ex1} we have
$X \in \mathcal A^{(1)}$.
By Lemma
\ref{lem:mult} we have
$X^r \in \mathcal A^{(r)}$.
The result follows by Definition
\ref{def:An}.
\end{proof}

\section{The current algebra $\mathcal A_q$ and its Lusztig automorphism
}
\noindent 
In \cite{basnc} Baseilhac and K. Shigechi
introduce the current algebra $\mathcal A_q$ for
 $\mathcal O_q$, and they
discuss how $\mathcal A_q$
is related to $\mathcal O_q$. This relationship is discussed further in 
\cite{basBel}, where it is conjectured 
that
$\mathcal O_q$ is a homomorphic image of $\mathcal A_q$
\cite[Conjecture~2]{basBel}.
The algebra $\mathcal A_q$ is defined by generators and
relations
\cite[Definition~3.1]{basnc}.
The generators are 
denoted 
$\mathcal W_{-k}$, $\mathcal W_{k+1}$,  $\mathcal G_{k+1}$, 
 $\mathcal{\tilde G}_{k+1}$,
where $k \in \mathbb N$.
In \cite[Lemma~2.1]{basBel}, Baseilhac and S. Belliard display
some central elements 
$\lbrace \Delta_{k+1}\rbrace_{k \in \mathbb N}$ for 
$\mathcal A_q$.
In \cite[Corollary~3.1]{basBel}, it is shown that $\mathcal A_q$ is generated by
these central elements together with
 $A=\mathcal W_0$ 
and $B=\mathcal W_1$.
The elements $A,B$
are known to satisfy the 
$q$-Dolan/Grady relations
(\ref{eq:dg1}), (\ref{eq:dg2}) \cite[eqn.~(3.7)]{basBel}.
In this section we show that $\mathcal A_q$ is
$A$-standard, and describe how the corresponding Lusztig automorphism
acts on the elements mentioned above.
We now recall the definition of $\mathcal A_q$.
\begin{definition}\rm
\label{def:Aq} (See 
\cite[Definition~3.1]{basnc}.)
Let $\mathcal A_q$ denote the $\mathbb F$-algebra with
generators
 $\mathcal W_{-k}$, $\mathcal W_{k+1}$,  $\mathcal G_{k+1}$,
 $\mathcal {\tilde G}_{k+1}$ $(k \in \mathbb N)$
 and the following relations:
\begin{eqnarray}
&&
 \lbrack \mathcal W_0, \mathcal W_{k+1}\rbrack= 
\lbrack \mathcal W_{-k}, \mathcal W_{1}\rbrack=
({\mathcal{\tilde G}}_{k+1} - \mathcal G_{k+1})/(q+q^{-1}),
\label{eq:3p1}
\\
&&
\lbrack \mathcal W_0, \mathcal G_{k+1}\rbrack_q= 
\lbrack {\mathcal{\tilde G}}_{k+1}, \mathcal W_{0}\rbrack_q= 
\rho  \mathcal W_{-k-1}-\rho 
 \mathcal W_{k+1},
\label{eq:3p2}
\\
&&
\lbrack \mathcal G_{k+1}, \mathcal W_{1}\rbrack_q= 
\lbrack \mathcal W_{1}, {\mathcal {\tilde G}}_{k+1}\rbrack_q= 
\rho  \mathcal W_{k+2}-\rho 
 \mathcal W_{-k},
\label{eq:3p3}
\\
&&
\lbrack \mathcal W_{-k}, \mathcal W_{-\ell}\rbrack=0,  \qquad 
\lbrack \mathcal W_{k+1}, \mathcal W_{\ell+1}\rbrack= 0,
\label{eq:3p4}
\\
&&
\lbrack \mathcal W_{-k}, \mathcal W_{\ell+1}\rbrack+
\lbrack \mathcal W_{k+1}, \mathcal W_{-\ell}\rbrack= 0,
\label{eq:3p5}
\\
&&
\lbrack \mathcal W_{-k}, \mathcal G_{\ell+1}\rbrack+
\lbrack \mathcal G_{k+1}, \mathcal W_{-\ell}\rbrack= 0,
\label{eq:3p6}
\\
&&
\lbrack \mathcal W_{-k}, {\mathcal {\tilde G}}_{\ell+1}\rbrack+
\lbrack {\mathcal {\tilde G}}_{k+1}, \mathcal W_{-\ell}\rbrack= 0,
\label{eq:3p7}
\\
&&
\lbrack \mathcal W_{k+1}, \mathcal G_{\ell+1}\rbrack+
\lbrack \mathcal  G_{k+1}, \mathcal W_{\ell+1}\rbrack= 0,
\label{eq:3p8}
\\
&&
\lbrack \mathcal W_{k+1}, {\mathcal {\tilde G}}_{\ell+1}\rbrack+
\lbrack {\mathcal {\tilde G}}_{k+1}, \mathcal W_{\ell+1}\rbrack= 0,
\label{eq:3p9}
\\
&&
\lbrack \mathcal G_{k+1}, \mathcal G_{\ell+1}\rbrack=0,
\qquad 
\lbrack {\mathcal {\tilde G}}_{k+1}, {\mathcal {\tilde G}}_{\ell+1}\rbrack= 0,
\label{eq:3p10}
\\
&&
\lbrack {\mathcal {\tilde G}}_{k+1}, \mathcal G_{\ell+1}\rbrack+
\lbrack \mathcal G_{k+1}, {\mathcal {\tilde G}}_{\ell+1}\rbrack= 0.
\label{eq:3p11}
\end{eqnarray}
In the above equations $\ell \in \mathbb N$
and $\rho = -(q^2-q^{-2})^2$.
 We are using the notation
 $\lbrack X,Y\rbrack=XY-YX$ and $\lbrack X,Y\rbrack_q=
qXY-q^{-1}YX$.
\end{definition}
\noindent For the algebra $\mathcal A_q$,
consider the element $A=\mathcal W_0$ and the
corresponding subspaces 
$\mathcal A^{(0)}_q$,
$\mathcal A^{(1)}_q$ from Examples
\ref{ex:com},
\ref{ex:ex1}.
\begin{lemma}
\label{lem:Aq} For the algebra $\mathcal A_q$ the following {\rm (i)--(v)} hold
for $k \in \mathbb N$:
\\
{\rm (i)} $\mathcal W_{-k} \in \mathcal A_q^{(0)}$;
\;\; {\rm (ii)} $\mathcal W_{k+1} \in \mathcal A_q^{(1)}$;
\;\; {\rm (iii)} $\mathcal G_{k+1} \in \mathcal A_q^{(1)}$;
\;\; {\rm (iv)} $\mathcal{\tilde G}_{k+1} \in \mathcal A_q^{(1)}$;
\;\; {\rm (v)}  $\Delta_{k+1} \in \mathcal A_q^{(0)}$.
\end{lemma}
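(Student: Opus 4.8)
The plan is to dispose of (i) and (v) at once and then treat (ii)--(iv) by a single device applied to the three generators. For (i), recall from Example \ref{ex:com} that $\mathcal A_q^{(0)}$ is the set of elements commuting with $A=\mathcal W_0$. Taking $\ell=0$ in the first relation of (\ref{eq:3p4}) gives $\lbrack \mathcal W_{-k},\mathcal W_0\rbrack=0$, so $\mathcal W_{-k}\in\mathcal A_q^{(0)}$. For (v), the elements $\Delta_{k+1}$ are central in $\mathcal A_q$ by \cite[Lemma~2.1]{basBel}, so in particular they commute with $A$, and $\Delta_{k+1}\in\mathcal A_q^{(0)}$ again by Example \ref{ex:com}. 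The substance of the lemma is therefore (ii)--(iv).

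For these, the key observation is a reduction of the condition defining $\mathcal A_q^{(1)}$. By Definition \ref{def:An} we have $\mathcal A_q^{(1)}=\ker({\rm bad}\,A)_2$, which by Example \ref{ex:ex1} is exactly the $q$-Dolan/Grady-type condition in the statement. Since $({\rm bad}\,A)_2={\rm bad}_0A\,{\rm bad}_1A$ by (\ref{eq:bad}), with ${\rm bad}_0A=({\rm ad}\,A)/(q-q^{-1})$ and $\mathcal A_q^{(0)}=\ker({\rm ad}\,A)$, an element $X$ lies in $\mathcal A_q^{(1)}$ if and only if
\[
\lbrack A,\,{\rm bad}_1A(X)\rbrack=0,\qquad\text{i.e.}\qquad {\rm bad}_1A(X)\in\mathcal A_q^{(0)} .
\]
So for each $X\in\lbrace\mathcal W_{k+1},\mathcal G_{k+1},\mathcal{\tilde G}_{k+1}\rbrace$ I would compute ${\rm bad}_1A(X)$ and verify that it commutes with $A$. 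The feature that makes this tractable is that $\mathcal W_{-k-1}\in\mathcal A_q^{(0)}$ (part (i) at index $k+1$), so all adjoint contributions of $\mathcal W_{-k-1}$ drop out.

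Concretely, by Definition \ref{def:bad} the element ${\rm bad}_1A(X)$ is a nonzero scalar times $(q^2-q^{-2})^2X+({\rm ad}_1A)({\rm ad}_{-1}A)(X)$, and the maps ${\rm ad}_rA$ commute. For $X=\mathcal G_{k+1}$, relation (\ref{eq:3p2}) gives ${\rm ad}_1A(\mathcal G_{k+1})=\rho(\mathcal W_{-k-1}-\mathcal W_{k+1})$ directly, so I would apply ${\rm ad}_{-1}A$ to this, rewrite ${\rm ad}_{-1}A(\mathcal W_{k+1})$ through (\ref{eq:3p1}), and use $\lbrack A,\mathcal W_{-k-1}\rbrack=0$; one finds that ${\rm bad}_1A(\mathcal G_{k+1})$ is a combination of $\mathcal W_{k+1}A$, $A\mathcal W_{-k-1}$, $\mathcal G_{k+1}$ and $\mathcal{\tilde G}_{k+1}$, and computing $\lbrack A,\cdot\rbrack$ of this combination (again via (\ref{eq:3p1}), (\ref{eq:3p2})) reduces to a handful of scalar identities in $q$ that cancel. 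The case $X=\mathcal{\tilde G}_{k+1}$ is the mirror image, using the second equality in (\ref{eq:3p2}), and the case $X=\mathcal W_{k+1}$ is handled the same way, now starting from ${\rm ad}\,A(\mathcal W_{k+1})=(\mathcal{\tilde G}_{k+1}-\mathcal G_{k+1})/(q+q^{-1})$ in (\ref{eq:3p1}) and feeding the output of (\ref{eq:3p2}) back in.

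I expect the only real obstacle to be organizational: after pushing a single factor of $A$ through, one must track the various monomials built from $A$, $\mathcal W_{\pm(k+1)}$, $\mathcal G_{k+1}$, $\mathcal{\tilde G}_{k+1}$ and confirm that the coefficient of each collapses to zero, the relevant cancellations being elementary identities such as $(1-q^{-2})(q^2-1)=(q-q^{-1})^2$. There is no conceptual difficulty once the reduction ``${\rm bad}_1A(X)$ is $A$-central'' is in hand; moreover only (\ref{eq:3p1}), (\ref{eq:3p2}) and the $A$-centrality of $\mathcal W_{-k-1}$ enter, so relation (\ref{eq:3p3}) (which involves $\mathcal W_1=B$ rather than $A$) plays no role here.
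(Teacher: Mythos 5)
Your proposal is correct and is essentially the paper's own proof: parts (i) and (v) are handled identically via Example \ref{ex:com}, and for (ii)--(iv) your reduction ``$X\in\mathcal A_q^{(1)}$ iff ${\rm bad}_1 A(X)$ commutes with $A$'' is exactly the identity $\lbrack \mathcal W_0,\lbrack \mathcal W_0,\lbrack \mathcal W_0, X\rbrack\,\rbrack_q\rbrack_{q^{-1}} = \rho\lbrack \mathcal W_0, X\rbrack$ that the paper verifies, with the commuting factors ${\rm bad}_0 A$ and ${\rm bad}_1 A$ simply applied in the opposite order. Both arguments rest on the same relations (\ref{eq:3p1}), (\ref{eq:3p2}), (\ref{eq:3p4}) (the last giving the $A$-centrality of $\mathcal W_{-k-1}$ that kills half the terms), so the difference is purely organizational.
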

\begin{proof} (i) The elements $\mathcal W_{-k}, \mathcal W_0$ commute by
(\ref{eq:3p4}). The result follows in view of
Example
\ref{ex:com}.
\\
\noindent (ii) We show that
\begin{eqnarray}
\label{eq:aneed}
\lbrack \mathcal W_0, \lbrack \mathcal W_0, \lbrack \mathcal W_0,
\mathcal W_{k+1} \rbrack \; \rbrack_q \rbrack_{q^{-1}}
= \rho \lbrack \mathcal W_0, \mathcal W_{k+1}
\rbrack.
\end{eqnarray}
By 
(\ref{eq:3p1}),
\begin{eqnarray}
\label{eq:start}
\lbrack \mathcal W_0, \lbrack \mathcal W_0, \lbrack \mathcal W_0,
\mathcal W_{k+1} \rbrack \; \rbrack_q \rbrack_{q^{-1}}
=
\frac{
\lbrack \mathcal W_0, 
\lbrack \mathcal W_0,
\mathcal{\tilde G}_{k+1}-\mathcal G_{k+1} \rbrack_q \rbrack_{q^{-1}}
}{q+q^{-1}}.
\end{eqnarray}
Using linear algebra and (\ref{eq:3p2}),
\begin{align}
\lbrack \mathcal W_0,
\lbrack \mathcal W_0, 
\mathcal{\tilde G}_{k+1} \rbrack_q \rbrack_{q^{-1}}
&= \lbrack \mathcal W_0,
\lbrack \mathcal W_0, 
\mathcal{\tilde G}_{k+1} \rbrack_{q^{-1}} \rbrack_{q}
\nonumber
\\
&=
-\lbrack \mathcal W_0,
\lbrack 
\mathcal{\tilde G}_{k+1}, \mathcal W_0 \rbrack_{q} \rbrack_{q}
\nonumber
\\
&=
-\lbrack \mathcal W_0,
\lbrack 
\mathcal W_0,
\mathcal G_{k+1} \rbrack_{q} \rbrack_{q}.
\label{eq:WWG}
\end{align}
Using in order
(\ref{eq:start}),
(\ref{eq:WWG}),
(\ref{eq:3p2}),
(\ref{eq:3p4})
we obtain
\begin{align*}
\lbrack \mathcal W_0, \lbrack \mathcal W_0, \lbrack \mathcal W_0,
\mathcal W_{k+1} \rbrack \; \rbrack_q \rbrack_{q^{-1}}
&=
\frac{
\lbrack \mathcal W_0, 
\lbrack \mathcal W_0,
\mathcal{\tilde G}_{k+1}-\mathcal G_{k+1} \rbrack_q \rbrack_{q^{-1}}
}{q+q^{-1}}
\\
&=
-\,\frac{
\lbrack \mathcal W_0, 
\lbrack \mathcal W_0,
\mathcal G_{k+1} 
\rbrack_q \rbrack_q +
\lbrack \mathcal W_0, 
\lbrack \mathcal W_0,
\mathcal G_{k+1} 
\rbrack_q \rbrack_{q^{-1}}
}{q+q^{-1}}
\\
&=- 
\lbrack \mathcal W_0, 
\lbrack \mathcal W_0,
\mathcal G_{k+1} 
\rbrack_q \rbrack
\\
&=
\rho \lbrack \mathcal W_0, \mathcal W_{k+1}- \mathcal W_{-k-1}
\rbrack
\\
&=
\rho \lbrack \mathcal W_0, \mathcal W_{k+1}\rbrack.
\end{align*}
\noindent We have shown 
(\ref{eq:aneed}). The result follows in view of Example
\ref{ex:ex1}.
\\
\noindent (iii) We show that
\begin{eqnarray}
\label{eq:Gneed}
\lbrack \mathcal W_0, \lbrack \mathcal W_0, \lbrack \mathcal W_0,
\mathcal G_{k+1} \rbrack \; \rbrack_q \rbrack_{q^{-1}}
= \rho \lbrack \mathcal W_0, \mathcal G_{k+1}
\rbrack.
\end{eqnarray}
Using in order (\ref{eq:3p2}),  
 (\ref{eq:3p4}),  
(\ref{eq:3p1})  we obtain 
\begin{align*}
\lbrack \mathcal W_0, \lbrack \mathcal W_0, \lbrack \mathcal W_0,
\mathcal G_{k+1} \rbrack \; \rbrack_q \rbrack_{q^{-1}}
&=
\lbrack \mathcal W_0, \lbrack \mathcal W_0, \lbrack \mathcal W_0,
\mathcal G_{k+1} \rbrack_q \rbrack\; \rbrack_{q^{-1}}
\\
&=
\rho 
\lbrack \mathcal W_0, \lbrack \mathcal W_0, 
\mathcal W_{-k-1} - \mathcal W_{k+1} \rbrack \;\rbrack_{q^{-1}}
\\
&=
-\rho 
\lbrack \mathcal W_0, \lbrack \mathcal W_0, 
\mathcal W_{k+1} \rbrack \;\rbrack_{q^{-1}}
\\
&=
\rho 
\frac{
\lbrack \mathcal W_0, 
\mathcal {G}_{k+1} - \mathcal{\tilde G}_{k+1} \rbrack_{q^{-1}}
}{q+q^{-1}}
\\
&=
\rho 
\frac{
\lbrack 
\mathcal {\tilde G}_{k+1} - \mathcal{G}_{k+1}, \mathcal W_0 \rbrack_{q}
}{q+q^{-1}}.
\end{align*}
Now in (\ref{eq:Gneed}), the left-hand side minus the right-hand side
is equal to
\begin{align*}
&
\rho\,\frac{
\lbrack
\mathcal {\tilde G}_{k+1},\mathcal W_0\rbrack_{q} - 
\lbrack \mathcal{G}_{k+1}, \mathcal W_0 \rbrack_{q}
}{q+q^{-1}} \;-\;\rho \lbrack \mathcal W_0, \mathcal G_{k+1}\rbrack
\\
&
\qquad \qquad \qquad \qquad 
=\;
\rho\,\frac{
\lbrack 
\mathcal{\tilde G}_{k+1},\mathcal W_0\rbrack_{q} - 
\lbrack \mathcal W_0, \mathcal{G}_{k+1} \rbrack_{q}
}{q+q^{-1}},
\end{align*}
and this is zero by (\ref{eq:3p2}).
We have shown
(\ref{eq:Gneed}). The result follows in view of
Example \ref{ex:ex1}.
\\
\noindent (iv) 
We show that
\begin{eqnarray}
\label{eq:tGneed}
\lbrack \mathcal W_0, \lbrack \mathcal W_0, \lbrack \mathcal W_0,
\mathcal{\tilde G}_{k+1} \rbrack \; \rbrack_q \rbrack_{q^{-1}}
= \rho \lbrack \mathcal W_0, \mathcal{\tilde G}_{k+1}
\rbrack.
\end{eqnarray}
Using in order
(\ref{eq:3p2}),
(\ref{eq:3p4}),
(\ref{eq:3p1}) we obtain
\begin{align*}
\lbrack \mathcal W_0, \lbrack \mathcal W_0, \lbrack \mathcal W_0,
\mathcal{\tilde G}_{k+1} \rbrack \; \rbrack_q \rbrack_{q^{-1}}
&=
\lbrack \mathcal W_0, \lbrack \mathcal W_0, \lbrack \mathcal W_0,
\mathcal{\tilde G}_{k+1} \rbrack_{q^{-1}} \rbrack\; \rbrack_{q}
\\
&=
-\lbrack \mathcal W_0, \lbrack \mathcal W_0, \lbrack
\mathcal{\tilde G}_{k+1},
\mathcal W_0
\rbrack_{q} \rbrack\; \rbrack_{q}
\\
&=
\rho 
\lbrack \mathcal W_0, \lbrack \mathcal W_0, 
\mathcal W_{k+1} - \mathcal W_{-k-1} \rbrack \;\rbrack_{q}
\\
&=
\rho 
\lbrack \mathcal W_0, \lbrack \mathcal W_0, 
\mathcal W_{k+1} \rbrack \;\rbrack_{q}
\\
&=
\rho 
\frac{
\lbrack  
\mathcal W_0,
\mathcal {\tilde G}_{k+1}-\mathcal G_{k+1}\rbrack_{q}
}{q+q^{-1}}.
\end{align*}
Now in (\ref{eq:tGneed}), the left-hand side minus the right-hand side
is equal to
\begin{align*}
&
\rho\,\frac{
\lbrack
\mathcal W_0,
\mathcal {\tilde G}_{k+1}\rbrack_{q} - 
\lbrack \mathcal W_0, \mathcal{G}_{k+1} \rbrack_{q}
}{q+q^{-1}} \;-\;\rho \lbrack \mathcal W_0, \mathcal{\tilde G}_{k+1}\rbrack
\\
&
\qquad \qquad \qquad \qquad 
=\;
\rho\,\frac{
\lbrack \mathcal{\tilde G}_{k+1}, \mathcal W_0 \rbrack_{q}
-
\lbrack \mathcal W_0, 
\mathcal {G}_{k+1}\rbrack_{q} 
}{q+q^{-1}},
\end{align*}
and this is zero by (\ref{eq:3p2}).
We have shown
(\ref{eq:tGneed}). The result follows in view of
Example \ref{ex:ex1}.
\\
\noindent (v) The element $\Delta_{k+1}$ is central,
so it commutes with $\mathcal W_0$. The result follows in view
of Example \ref{ex:com}.
\end{proof}

\begin{proposition}
\label{prop:Aqstandard}
The algebra $\mathcal A_q$ is $A$-standard.
\end{proposition}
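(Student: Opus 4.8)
The plan is to invoke Lemma~\ref{lem:three}, specifically the implication ${\rm (iii)} \Rightarrow {\rm (i)}$, which reduces the claim to exhibiting a generating set of $\mathcal A_q$ all of whose elements are $A$-standard. The obvious candidate is the defining generating set from Definition~\ref{def:Aq}, namely the four families $\mathcal W_{-k}$, $\mathcal W_{k+1}$, $\mathcal G_{k+1}$, $\mathcal{\tilde G}_{k+1}$ with $k \in \mathbb N$. These generate $\mathcal A_q$ by construction, so it remains only to check that each of them is $A$-standard.

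First I would recall, combining Definition~\ref{def:st} with Definition~\ref{def:An}, that every element of $\mathcal A_q^{(n)}$ is automatically $A$-standard: if $X \in \mathcal A_q^{(n)}$ then $({\rm bad}\,A)_{n+1}(X)=0$, so the positive integer $n+1$ witnesses the required vanishing. Thus membership in any single $\mathcal A_q^{(n)}$ already suffices. Such membership has been established for every defining generator in Lemma~\ref{lem:Aq}(i)--(iv), which places $\mathcal W_{-k}$ in $\mathcal A_q^{(0)}$ and each of $\mathcal W_{k+1}$, $\mathcal G_{k+1}$, $\mathcal{\tilde G}_{k+1}$ in $\mathcal A_q^{(1)}$. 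Hence all four families consist of $A$-standard elements.

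With an $A$-standard generating set in hand, Lemma~\ref{lem:three} then yields at once that $\mathcal A_q$ is $A$-standard, finishing the proof. I do not expect any real obstacle in this concluding step: the substantive work has already been carried out in Lemma~\ref{lem:Aq}, where the defining relations~(\ref{eq:3p1})--(\ref{eq:3p11}) are used to verify the order-one condition of Example~\ref{ex:ex1} for the non-commuting generators $\mathcal W_{k+1}$, $\mathcal G_{k+1}$, and $\mathcal{\tilde G}_{k+1}$. Granting those verifications, the present proposition is a one-line consequence of Lemma~\ref{lem:three}, requiring no further computation.
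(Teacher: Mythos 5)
Your proposal is correct and follows exactly the paper's own route: cite Lemma~\ref{lem:Aq} to see that the defining generators from Definition~\ref{def:Aq} lie in $\mathcal A_q^{(0)}$ or $\mathcal A_q^{(1)}$ (hence are $A$-standard), then apply Lemma~\ref{lem:three}(iii)$\Rightarrow$(i). The only difference is that you spell out the link between membership in $\mathcal A_q^{(n)}$ and $A$-standardness, which the paper leaves implicit.
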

\begin{proof} 
Consider the generators of $\mathcal A_q$ from
Definition
\ref{def:Aq}.
By Lemma \ref{lem:Aq}  these generators are
$A$-standard. 
Now $\mathcal A_q$ is $A$-standard 
by Lemma
\ref{lem:three}.
\end{proof}
\noindent Since 
the algebra $\mathcal A_q$ is $A$-standard, we may speak of
the corresponding Lusztig automorphism $S$ of $\mathcal A_q$,
from
Theorem
\ref{thm:standard} and Definition
\ref{def:LA}.

\begin{proposition} 
\label{conj:W}
For $k \in \mathbb N$ the automorphism $S$ sends
\begin{eqnarray*}
\mathcal W_{-k} &\mapsto& \mathcal W_{-k},
\\
\mathcal W_{k+1} &\mapsto& \mathcal W_{k+1} + \frac{q \mathcal W_0^2 \mathcal W_{k+1} -(q+q^{-1})\mathcal W_0 \mathcal W_{k+1}\mathcal W_0+ q^{-1}\mathcal W_{k+1}\mathcal W_0^2}{(q-q^{-1})(q^2-q^{-2})},
\\
\mathcal G_{k+1} &\mapsto& \mathcal G_{k+1} + \frac{q \mathcal W_0^2 
\mathcal G_{k+1} -(q+q^{-1})\mathcal W_0 \mathcal G_{k+1}\mathcal W_0+ q^{-1}\mathcal G_{k+1}
\mathcal W_0^2}{(q-q^{-1})(q^2-q^{-2})} = \mathcal{\tilde G}_{k+1},
\\
\mathcal {\tilde G}_{k+1} &\mapsto& \mathcal {\tilde G}_{k+1} + \frac{q \mathcal  W_0^2 \mathcal {\tilde G}_{k+1} -
(q+q^{-1})\mathcal W_0\mathcal {\tilde G}_{k+1}\mathcal W_0+ q^{-1}\mathcal {\tilde G}_{k+1}\mathcal W_0^2}{(q-q^{-1})(q^2-q^{-2})},
\\
\Delta_{k+1} &\mapsto& \Delta_{k+1}.
\end{eqnarray*} 
Moreover $S^{-1}$ sends
\begin{eqnarray*}
\mathcal W_{-k} &\mapsto& \mathcal W_{-k},
\\
\mathcal W_{k+1} &\mapsto& \mathcal W_{k+1} + \frac{q^{-1} \mathcal W_0^2\mathcal W_{k+1} -(q+q^{-1})\mathcal W_0 \mathcal W_{k+1}\mathcal W_0+ q \mathcal 
W_{k+1}\mathcal W_0^2}{(q-q^{-1})(q^2-q^{-2})},
\\
\mathcal G_{k+1} &\mapsto& \mathcal G_{k+1} + \frac{q^{-1} \mathcal W_0^2
\mathcal G_{k+1} -(q+q^{-1})\mathcal W_0\mathcal G_{k+1}\mathcal W_0+
q\mathcal G_{k+1}\mathcal W_0^2}{(q-q^{-1})(q^2-q^{-2})},
\\
\mathcal {\tilde G}_{k+1} &\mapsto& \mathcal{\tilde G}_{k+1} + 
\frac{q^{-1} \mathcal W_0^2 \mathcal {\tilde G}_{k+1} -(q+q^{-1})
\mathcal W_0\mathcal{\tilde G}_{k+1}\mathcal W_0+ q\mathcal {\tilde G}_{k+1}
\mathcal W_0^2}{(q-q^{-1})(q^2-q^{-2})}
= \mathcal G_{k+1},
\\
\Delta_{k+1} &\mapsto& \Delta_{k+1}.
\end{eqnarray*} 
\end{proposition}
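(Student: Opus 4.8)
The plan is to dispatch each generator according to its location among the subspaces $\mathcal A_q^{(0)}$, $\mathcal A_q^{(1)}$ already determined in Lemma~\ref{lem:Aq}, and then read off the action of $S$ and $S'$ directly from Examples~\ref{eq:ex0} and \ref{eq:ex1}. Recall that $\mathcal A_q$ is $A$-standard by Proposition~\ref{prop:Aqstandard}, so $S$ and $S'$ are mutually inverse automorphisms of $\mathcal A_q$ and in particular $S' = S^{-1}$.

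First I would handle the generators that commute with $A = \mathcal W_0$. By Lemma~\ref{lem:Aq}(i),(v) we have $\mathcal W_{-k} \in \mathcal A_q^{(0)}$ and $\Delta_{k+1} \in \mathcal A_q^{(0)}$, so Example~\ref{eq:ex0} shows that $S$ and $S^{-1}$ fix each of these. This yields the first and last lines of both displays. Next I would treat $\mathcal W_{k+1}$, $\mathcal G_{k+1}$, $\mathcal{\tilde G}_{k+1}$. By Lemma~\ref{lem:Aq}(ii),(iii),(iv) each of these lies in $\mathcal A_q^{(1)}$, so Example~\ref{eq:ex1} applies verbatim with $A = \mathcal W_0$ and produces precisely the quoted images for $S$ and for $S'$. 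At this stage every displayed formula is established; what remains is only to justify the two extra equalities $S(\mathcal G_{k+1}) = \mathcal{\tilde G}_{k+1}$ and $S^{-1}(\mathcal{\tilde G}_{k+1}) = \mathcal G_{k+1}$.

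For these last two identities I would rewrite the relevant correction terms as nested commutators. A short computation shows that the numerator appearing in the $S$-image of $\mathcal G_{k+1}$ equals $\lbrack \mathcal W_0, \lbrack \mathcal W_0, \mathcal G_{k+1} \rbrack_q \rbrack$; evaluating the inner $q$-commutator by (\ref{eq:3p2}) gives $\rho(\mathcal W_{-k-1} - \mathcal W_{k+1})$, the $\mathcal W_{-k-1}$ term is annihilated by (\ref{eq:3p4}), and (\ref{eq:3p1}) rewrites the surviving commutator as $(\mathcal{\tilde G}_{k+1} - \mathcal G_{k+1})/(q+q^{-1})$. Since $\rho = -(q^2-q^{-2})^2$ and $q^2 - q^{-2} = (q - q^{-1})(q + q^{-1})$, the scalar factors cancel against the denominator $(q-q^{-1})(q^2-q^{-2})$, leaving $\mathcal{\tilde G}_{k+1} - \mathcal G_{k+1}$; adding the initial $\mathcal G_{k+1}$ yields $\mathcal{\tilde G}_{k+1}$. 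The inverse identity is entirely parallel: the numerator in the $S^{-1}$-image of $\mathcal{\tilde G}_{k+1}$ equals $\lbrack \mathcal W_0, \lbrack \mathcal W_0, \mathcal{\tilde G}_{k+1} \rbrack_{q^{-1}} \rbrack$, and the second equality in (\ref{eq:3p2}) together with (\ref{eq:3p4}) and (\ref{eq:3p1}) collapses it to $\mathcal G_{k+1} - \mathcal{\tilde G}_{k+1}$.

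I do not expect a genuine obstacle here: the proposition is essentially a bookkeeping exercise layered on top of Lemma~\ref{lem:Aq}. The one step requiring care is the commutator recognition and scalar simplification in the two $\mathcal G \leftrightarrow \mathcal{\tilde G}$ equalities, and even that reuses exactly the manipulations already carried out in the proof of Lemma~\ref{lem:Aq}(ii)--(iv).
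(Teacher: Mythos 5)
Your proposal is correct and takes essentially the same route as the paper, whose proof is just a one-line citation of Theorem \ref{thm:standard}, Lemma \ref{lem:Aq}, and Examples \ref{eq:ex0}, \ref{eq:ex1}. The only difference is that you explicitly verify the two extra equalities $S(\mathcal G_{k+1})=\mathcal{\tilde G}_{k+1}$ and $S^{-1}(\mathcal{\tilde G}_{k+1})=\mathcal G_{k+1}$ by recognizing the correction numerators as $\lbrack \mathcal W_0, \lbrack \mathcal W_0, \mathcal G_{k+1}\rbrack_q\rbrack$ and $\lbrack \mathcal W_0, \lbrack \mathcal W_0, \mathcal{\tilde G}_{k+1}\rbrack_{q^{-1}}\rbrack$ and collapsing them via (\ref{eq:3p2}), (\ref{eq:3p4}), (\ref{eq:3p1}); this computation is correct and supplies detail the paper leaves to the reader.
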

\begin{proof} By
Theorem
\ref{thm:standard},
Lemma
\ref{lem:Aq}, 
and Examples
\ref{eq:ex0}, 
\ref{eq:ex1}.
\end{proof}

\section{Finite-dimensional $\mathcal O_q$-modules}

\noindent 
Recall the generators $A$, $B$ for the $q$-Onsager algebr $\mathcal O_q$.
Throughout this section $V$ denotes a finite-dimensional irreducible
 ${\mathcal O}_q$-module
 on which $A$ and $B$ are  
 diagonalizable. To avoid trivialities, we always assume that
 $V$ has 
 dimension at least 2.
We describe what happens when
$V$ is twisted via the Lusztig automorphism $L$ of $\mathcal O_q$.
By \cite[Theorem~3.10]{qSerre}
the elements $A$, $B$ act on $V$ as
 a tridiagonal pair. The tridiagonal pair concept is defined in
\cite[Definition~1.1]{TD00},
 and described further in
\cite{INT,
ItoTer,
IT:aug,
madrid}.
In what follows, we freely invoke the notation and
theory of tridiagonal pairs.
Fix a standard ordering $\lbrace \theta_i\rbrace_{i=0}^d$
of the eigenvalues of $A$ on $V$, and a
standard ordering $\lbrace \theta^*_i\rbrace_{i=0}^d$
of the eigenvalues of $B$ on $V$.
By construction 
 $\lbrace \theta_i\rbrace_{i=0}^d$ are mutually distinct and
 contained in $\mathbb F$. Similarly
 $\lbrace \theta^*_i\rbrace_{i=0}^d$ are mutually distinct and
 contained in $\mathbb F$. 
Note that $d\geq 1$; 
otherwise $A=\theta_0 I$ and $B=\theta^*_0I$ which contradicts the
irreducibility of $V$.
For $0 \leq i \leq d$ let 
$E_i:V\to V $
(resp. $E^*_i:V\to V $)
denote the projection onto the eigenspace of $A$ (resp. $B$)
for $\theta_i$ (resp. $\theta^*_i$).
By linear algebra,
\begin{eqnarray*}
A = \sum_{i=0}^d \theta_i E_i,
\qquad \qquad
B = \sum_{i=0}^d \theta^*_i E^*_i.
\end{eqnarray*}
By \cite[Lemma~2.4]{TD00}
the following hold for $0 \leq i,j\leq d$:
\begin{eqnarray}
E^*_i A E^*_j = 
\begin{cases}
0 &  {\mbox{\rm if $|i-j|>1$}}; \\
\not=0 & {\mbox{\rm if $|i-j|=1$}}
\end{cases}
\qquad \qquad
E_i B E_j = 
\begin{cases}
0 &  {\mbox{\rm if $|i-j|>1$}}; \\
\not=0 & {\mbox{\rm if $|i-j|=1$}}.
\end{cases}
\label{eq:tripProd}
\end{eqnarray}
\noindent
The following result can be found in \cite[Theorem~11.2]{TD00}; we give
a short proof for the sake of completeness.
\begin{lemma} {\rm (See \cite[Theorem~11.2]{TD00}.)}
There exist nonzero $a,b\in \mathbb F$
such that
\begin{eqnarray}
\theta_i = a q^{d-2i} + a^{-1}q^{2i-d},
\qquad \qquad
\theta^*_i = b q^{d-2i} + b^{-1}q^{2i-d}
\label{eq:ab}
\end{eqnarray}
for $0 \leq i \leq d$.
\end{lemma}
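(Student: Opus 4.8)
The plan is to read off the eigenvalue constraints by compressing the $q$-Dolan/Grady relation (\ref{eq:dg1}) between the eigenprojections of $A$. First I would write $A=\sum_k \theta_k E_k$ and multiply (\ref{eq:dg1}) on the left by $E_i$ and on the right by $E_j$. Using $E_iA=\theta_iE_i$ and $AE_j=\theta_jE_j$, the left side collapses to $(\theta_i^3-\lbrack 3\rbrack_q\theta_i^2\theta_j+\lbrack 3\rbrack_q\theta_i\theta_j^2-\theta_j^3)\,E_iBE_j$, and the right side to $(q^2-q^{-2})^2(\theta_j-\theta_i)\,E_iBE_j$. By the right-hand equation of (\ref{eq:tripProd}), $E_iBE_j\neq 0$ whenever $|i-j|=1$, so for adjacent $i,j$ I can cancel $E_iBE_j$ and reduce to a scalar identity in $\theta_i,\theta_j$.

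Next I would factor. Put $\beta=q^2+q^{-2}$, so that $\lbrack 3\rbrack_q=\beta+1$. Then the cubic factors as $(\theta_i-\theta_j)(\theta_i^2-\beta\theta_i\theta_j+\theta_j^2)$, while the right side equals $-(q^2-q^{-2})^2(\theta_i-\theta_j)$. Since the $\theta_k$ are mutually distinct we have $\theta_i\neq\theta_j$, and cancelling this factor yields
\[
\theta_i^2-\beta\theta_i\theta_j+\theta_j^2=-(q^2-q^{-2})^2 \qquad (|i-j|=1).
\]
Writing this for the pairs $(i,i+1)$ and $(i+1,i+2)$ and subtracting, the $\theta_{i+1}^2$ terms cancel and the difference factors as $(\theta_i-\theta_{i+2})(\theta_i+\theta_{i+2}-\beta\theta_{i+1})$. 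Using distinctness again, now in the form $\theta_i\neq\theta_{i+2}$, gives the homogeneous three-term recurrence $\theta_{i-1}-\beta\theta_i+\theta_{i+1}=0$ for $1\le i\le d-1$.

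The characteristic polynomial $\lambda^2-\beta\lambda+1$ has the two roots $q^2,q^{-2}$, and these are distinct because $q$ is not a root of unity. Hence there are scalars $\alpha,\alpha'$, determined from $\theta_0,\theta_1$ by a linear system of determinant $q^{-2}-q^2\neq 0$ (so $\alpha,\alpha'\in\mathbb F$), with $\theta_i=\alpha q^{2i}+\alpha'q^{-2i}$ for all $i$; for $d=1$ this is the defining relation of $\alpha,\alpha'$, and for $d\ge 2$ it propagates from $i=0,1$ along the recurrence. Substituting this expression into the displayed quadratic relation, a short computation gives $\theta_i^2-\beta\theta_i\theta_{i+1}+\theta_{i+1}^2=-\alpha\alpha'(q^2-q^{-2})^2$; comparing with the relation forces $\alpha\alpha'=1$, and in particular $\alpha,\alpha'\neq 0$. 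Setting $a=\alpha'q^{-d}$, so that $a^{-1}=\alpha q^d$, then recovers the first equation of (\ref{eq:ab}); the formula for $\theta^*_i$ follows verbatim from (\ref{eq:dg2}) and the eigenprojections $E^*_i$ of $B$ via the left-hand equation of (\ref{eq:tripProd}).

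The compression and the two factorizations are routine; the real content is the normalization $\alpha\alpha'=1$, which is what collapses the generic two-parameter solution of the recurrence into the single-parameter form (\ref{eq:ab}) and simultaneously guarantees $a\in\mathbb F$ is nonzero. I expect the main points to watch are the careful use of distinctness at two separate stages (to cancel $\theta_i-\theta_j$ and then $\theta_i-\theta_{i+2}$) and the degenerate case $d=1$, where no recurrence is available and the parametrization must instead be produced directly from $\theta_0,\theta_1$ and then pinned down by the quadratic relation.
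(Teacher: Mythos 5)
Your proof is correct and follows essentially the same route as the paper's: compress (\ref{eq:dg1}) between $E_i$ and $E_j$, use (\ref{eq:tripProd}) and the distinctness of the $\theta_k$ to obtain the quadratic relation for $|i-j|=1$, subtract consecutive instances to get the three-term recurrence, solve it, and invoke the quadratic relation once more to force the product of the two constants to equal $1$. The only differences are cosmetic: the paper writes the general solution directly as $u q^{d-2i}+v q^{2i-d}$ and evaluates the single relation $p(0,1)=0$ to get $uv=1$, whereas you solve in the basis $q^{\pm 2i}$ and renormalize, and your explicit handling of the degenerate case $d=1$ is a detail the paper leaves implicit.
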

\begin{proof} We verify the equation on the left in
(\ref{eq:ab}). For $0 \leq i,j\leq d$ we multiply
each side of
(\ref{eq:dg1}) on the left by $E_i$ and on the right by $E_j$.
Simplify the result to get
\begin{eqnarray}
0 = E_i B E_j (\theta_i-\theta_j)\bigl(
\theta^2_i - (q^2+q^{-2})\theta_i \theta_j + \theta^2_j
+ (q^2-q^{-2})^2\bigr).
\label{eq:trip}
\end{eqnarray}
Now assuming $|i-j|=1$ and using
$E_i B E_j \not=0$,
\begin{eqnarray}
\label{eq:qreq}
0 = \theta^2_i - (q^2+q^{-2}) \theta_{i} \theta_j + \theta^2_j
+(q^2-q^{-2})^2.
\end{eqnarray}
Let $p(i,j)$ denote the right-hand side of
(\ref{eq:qreq}). 
For $1 \leq j \leq d-1$,
\begin{eqnarray*}
\theta_{j-1} - (q^2+ q^{-2}) \theta_j + \theta_{j+1} 
&=& \frac{p(j-1,j)-p(j,j+1)}{\theta_{j-1}-\theta_{j+1}}
\\
&=&  0.
\end{eqnarray*}
By the above recurrence there exist 
$u,v \in \mathbb F$ such that
\begin{eqnarray}
\label{eq:thForm}
\theta_i = u q^{d-2i} + v q^{2i-d} \qquad \qquad (0 \leq i \leq d).
\end{eqnarray}
Since $d\geq 1$ we have
the equation $0 = p(0,1)$. Evaluate this equation
using 
(\ref{eq:thForm}) 
 to obtain
$uv=1$. This yields the equation on the left in
(\ref{eq:ab}).
The equation on the right in
(\ref{eq:ab}) is similarly obtained.
\end{proof}

\begin{definition}\rm Define
\begin{eqnarray*}
t_i = a^{2i}q^{2i(d-i)} \qquad \qquad (0 \leq i \leq d).
\end{eqnarray*}
\end{definition}

\noindent The following calculation will be
useful.

\begin{lemma}
\label{lem:calc}
For $0 \leq i,j\leq d$ such that
$|i-j|\leq 1$, 
\begin{equation}
  \frac{t_j}{t_i}
=
1 + \frac{q\theta^2_i - (q+q^{-1})\theta_i \theta_j
+ q^{-1}\theta_j^2}{(q-q^{-1})(q^2-q^{-2})}
\label{eq:calc}
\end{equation}
and
\begin{equation}
\frac{t_i}{t_j}=
1 + \frac{q^{-1}\theta^2_i - (q+q^{-1})\theta_i \theta_j
+ q\theta_j^2}{(q-q^{-1})(q^2-q^{-2})}.
\label{eq:calc2}
\end{equation}
\end{lemma}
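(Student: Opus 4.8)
The plan is to reduce the lemma to a pair of short direct verifications. First I would note that it suffices to prove (\ref{eq:calc}), since (\ref{eq:calc2}) is simply (\ref{eq:calc}) with $i$ and $j$ interchanged: the hypothesis $|i-j|\leq 1$ and the denominator $(q-q^{-1})(q^2-q^{-2})$ are both symmetric in $i,j$, and interchanging $i,j$ in the numerator of (\ref{eq:calc}) yields the numerator of (\ref{eq:calc2}) while sending $t_j/t_i$ to $t_i/t_j$. As the set of pairs with $|i-j|\leq 1$ is closed under this interchange, proving (\ref{eq:calc}) for all such pairs gives (\ref{eq:calc2}) for all such pairs as well.

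Next I would factor the two sides of (\ref{eq:calc}). The numerator factors as
\begin{equation*}
q\theta^2_i - (q+q^{-1})\theta_i \theta_j + q^{-1}\theta^2_j = (q\theta_i - q^{-1}\theta_j)(\theta_i - \theta_j),
\end{equation*}
and the denominator factors as $(q-q^{-1})(q^2-q^{-2}) = q^{-3}(q^2-1)^2(q^2+1)$. The case $j=i$ is then immediate, since the factor $\theta_i - \theta_j$ vanishes and the right-hand side of (\ref{eq:calc}) collapses to $1 = t_i/t_i$.

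For the cases $j = i\pm 1$ I would set $\alpha = aq^{d-2i}$, so that by (\ref{eq:ab}) one has $\theta_i = \alpha + \alpha^{-1}$ and $\theta_{i\pm 1} = \alpha q^{\mp 2} + \alpha^{-1}q^{\pm 2}$. Taking $j=i+1$, a short computation gives the two factors $q\theta_i - q^{-1}\theta_{i+1} = (q-q^{-3})\alpha$ and $\theta_i - \theta_{i+1} = (1-q^{-2})(\alpha - q^2\alpha^{-1})$, whose product is $(q-q^{-3})(1-q^{-2})(\alpha^2 - q^2)$. Dividing by the factored denominator and adding $1$ collapses the right-hand side of (\ref{eq:calc}) to the monomial $\alpha^2 q^{-2}$. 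On the other hand, evaluating the exponent in $t_{i+1}/t_i = a^2 q^{2(i+1)(d-i-1) - 2i(d-i)}$ gives $t_{i+1}/t_i = a^2 q^{2d-4i-2} = \alpha^2 q^{-2}$, so the two sides agree. The case $j=i-1$ is handled the same way: there the factors become $(q-q^{-3})\alpha^{-1}$ and $(1-q^{-2})(\alpha^{-1} - q^2\alpha)$, the right-hand side collapses to $\alpha^{-2}q^{-2}$, and this matches $t_{i-1}/t_i = a^{-2}q^{4i-2d-2} = \alpha^{-2}q^{-2}$.

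I do not expect any conceptual obstacle here, as the lemma is a direct identity; the only delicate points are bookkeeping ones. The main one is computing the exponent of $q$ in the ratio $t_j/t_i$ correctly and confirming that, after expanding the factored numerator, the unwanted power $\alpha^{\mp 2}$ cancels so that exactly the monomial $\alpha^{\pm 2}q^{-2}$ survives. Carrying the factor $(q^2-1)^2(q^2+1)$ through both numerator and denominator keeps this cancellation transparent and guards against sign errors.
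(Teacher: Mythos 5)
Your proof is correct and is essentially the paper's argument: the paper also proves the lemma by direct case-by-case verification over $i-j\in\{-1,0,1\}$, computing that each side of (\ref{eq:calc}) equals $a^{-2}q^{4i-2d-2}$, $1$, or $a^{2}q^{2d+2-4j}$ respectively (and similarly for (\ref{eq:calc2})), which matches your values $\alpha^{\mp 2}q^{-2}$. Your symmetry reduction of (\ref{eq:calc2}) to (\ref{eq:calc}) and the factorization $(q\theta_i-q^{-1}\theta_j)(\theta_i-\theta_j)$ are just tidy streamlining of the same computation.
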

\begin{proof}
Each side of (\ref{eq:calc}) is equal to
$q^{4i-2d-2}a^{-2}$ (if $i-j=1$),
$1$ (if $i=j$), and
$q^{2d+2-4j}a^2$ (if $i-j=-1$).
Each side of (\ref{eq:calc2}) is equal to
$q^{2d+2-4i}a^2$ (if $i-j=1$),
$1$ (if $i=j$), and
$q^{4j-2d-2}a^{-2}$ (if $i-j=-1$).
\end{proof}

\begin{definition}
\label{def:psi}
\rm 
Define
\begin{eqnarray}
\label{eq:psi1}
\Psi = \sum_{i=0}^d t_i E_i.
\end{eqnarray}
\end{definition}

\begin{lemma} The map $\Psi$ is invertible, and
\begin{eqnarray}
\label{eq:psi2}
\Psi^{-1} = \sum_{i=0}^d t^{-1}_i E_i.
\end{eqnarray}
\end{lemma}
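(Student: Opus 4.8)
The plan is to exploit the fact that the maps $\lbrace E_i\rbrace_{i=0}^d$ are the spectral projections of $A$ on $V$, so they form a complete set of mutually orthogonal idempotents. Concretely, they satisfy
\begin{eqnarray*}
E_i E_j = \delta_{ij} E_i \quad (0 \leq i,j \leq d), \qquad \qquad \sum_{i=0}^d E_i = I,
\end{eqnarray*}
since $V$ is the direct sum of the eigenspaces of $A$ and $E_i$ projects onto the $\theta_i$-eigenspace. This is the only structural input I need; everything else is a short scalar bookkeeping.

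Next I would record that each scalar $t_i = a^{2i}q^{2i(d-i)}$ is nonzero, which is immediate since $a \neq 0$ and $q \neq 0$. Hence $t_i^{-1}$ is a well-defined element of $\mathbb F$ and the expression $\sum_{i=0}^d t_i^{-1} E_i$ on the right-hand side of (\ref{eq:psi2}) makes sense.

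The computation then proceeds by multiplying $\Psi$ from (\ref{eq:psi1}) by the candidate inverse and using orthogonality to collapse the double sum:
\begin{eqnarray*}
\Biggl(\sum_{i=0}^d t_i E_i\Biggr)\Biggl(\sum_{j=0}^d t_j^{-1} E_j\Biggr) = \sum_{i=0}^d \sum_{j=0}^d t_i t_j^{-1} E_i E_j = \sum_{i=0}^d t_i t_i^{-1} E_i = \sum_{i=0}^d E_i = I.
\end{eqnarray*}
The reverse product is identical by the same cancellation, so $\sum_{i=0}^d t_i^{-1} E_i$ is a two-sided inverse of $\Psi$. This shows $\Psi$ is invertible and establishes (\ref{eq:psi2}).

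I do not expect any genuine obstacle here: the result is a routine consequence of the spectral decomposition of $A$ together with the nonvanishing of the $t_i$. The only point deserving a word of care is the justification that $\lbrace E_i\rbrace_{i=0}^d$ really are orthogonal idempotents summing to $I$; but this is standard linear algebra for a diagonalizable operator and is already implicit in the decomposition $A = \sum_{i=0}^d \theta_i E_i$ recorded earlier.
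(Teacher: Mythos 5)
Your proof is correct and is essentially the paper's own argument: the paper likewise verifies the inverse using only $E_iE_j=\delta_{i,j}E_i$ and $\sum_{i=0}^d E_i = I$, with your version simply spelling out the double-sum cancellation and the nonvanishing of the $t_i$.
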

\begin{proof} Since 
$I = \sum_{i=0}^d E_i$ and
$E_i E_j = \delta_{i,j} E_i$ for $0 \leq i,j\leq d$.
\end{proof}

\begin{theorem}
\label{thm:Oq}
For $X \in {\mathcal O}_q$ the following holds on $V$:
 \begin{eqnarray}
L(X) = \Psi^{-1} X \Psi.
\label{eq:LPXP}
\end{eqnarray}
\end{theorem}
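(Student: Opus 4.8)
The plan is to prove the identity \eqref{eq:LPXP} on generators and then extend it multiplicatively. Since $\mathcal O_q$ is generated by $A$ and $B$, and since both $L$ and the conjugation map $X \mapsto \Psi^{-1} X \Psi$ are algebra automorphisms acting on $V$ (the latter being conjugation by an invertible operator, hence visibly multiplicative and bijective), it suffices to verify that $L(A) = \Psi^{-1} A \Psi$ and $L(B) = \Psi^{-1} B \Psi$ as operators on $V$. The multiplicative extension is then automatic: if two algebra homomorphisms agree on a generating set, they agree everywhere.

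First I would handle the generator $A$. Since $\Psi = \sum_i t_i E_i$ is a polynomial in $A$ (it is diagonal in the eigenbasis of $A$), it commutes with $A$, so $\Psi^{-1} A \Psi = A = L(A)$ by \eqref{eq:bk1}. This is immediate. The substantive step is the generator $B$. Using the spectral decompositions $A = \sum_i \theta_i E_i$ and $B = \sum_j \theta^*_j E^*_j$, I would expand $\Psi^{-1} B \Psi$ by inserting the resolution of the identity $I = \sum_i E_i$ on both sides, writing $\Psi^{-1} B \Psi = \sum_{i,j} t_i^{-1} t_j\, E_i B E_j$. The key structural input is \eqref{eq:tripProd}: the term $E_i B E_j$ vanishes unless $|i-j| \leq 1$, so the double sum collapses to contributions with $|i-j| \leq 1$, precisely the range in which Lemma \ref{lem:calc} applies. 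This is exactly why the $t_i$ were defined as they were.

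The heart of the argument is then to match this against $L(B)$ from \eqref{eq:bk1}. I would compute the action of $L(B) = B + \frac{qA^2B - (q+q^{-1})ABA + q^{-1}BA^2}{(q-q^{-1})(q^2-q^{-2})}$ on the same basis by sandwiching with $E_i$ on the left and $E_j$ on the right; since $A E_k = \theta_k E_k$, each $E_i(\cdots)E_j$ produces the scalar $1 + \frac{q\theta_i^2 - (q+q^{-1})\theta_i\theta_j + q^{-1}\theta_j^2}{(q-q^{-1})(q^2-q^{-2})}$ times $E_i B E_j$. By Lemma \ref{lem:calc}, specifically \eqref{eq:calc}, that scalar equals $t_j/t_i = t_i^{-1} t_j$, which is exactly the coefficient appearing in the conjugation expansion. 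Summing over the surviving $(i,j)$ with $|i-j|\leq 1$ gives $E_i L(B) E_j = E_i (\Psi^{-1} B \Psi) E_j$ for all $i,j$, whence $L(B) = \Psi^{-1} B \Psi$.

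The main obstacle, and the only place requiring genuine care, is the interchange of the operator identity \eqref{eq:bk1}, which is a relation in the abstract algebra $\mathcal O_q$, with the spectral computation on $V$. One must justify that the displayed formula for $L(B)$ holds as an operator on $V$ (it does, since $V$ is an $\mathcal O_q$-module and \eqref{eq:bk1} is an identity in $\mathcal O_q$), and that the scalar-matching via \eqref{eq:calc} covers all nonzero matrix blocks while the vanishing blocks $|i-j|>1$ contribute nothing to either side. Once these bookkeeping points are settled, the two operators agree block by block, and because the $E_i$ and $E^*_j$ together span enough of $\mathrm{End}(V)$ to separate the blocks, equality of all blocks forces equality of the operators.
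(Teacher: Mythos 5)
Your proposal is correct and follows essentially the same route as the paper: reduce to the generators $A$, $B$, dispose of $A$ by noting $\Psi$ commutes with it, and for $B$ compare the blocks $E_i L(B) E_j$ and $E_i \Psi^{-1} B \Psi E_j$, using Lemma \ref{lem:calc} when $|i-j|\leq 1$ and the vanishing $E_i B E_j = 0$ from (\ref{eq:tripProd}) when $|i-j|>1$. The only cosmetic slip is your closing appeal to the $E^*_j$, which are not needed: $\sum_i E_i = I$ alone forces equality of two operators whose $E_i(\cdot)E_j$ blocks all agree.
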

\begin{proof}
It suffices to show
$L(A) = \Psi^{-1} A \Psi$
and
$L(B) = \Psi^{-1} B \Psi$.
Certainly
$L(A) = \Psi^{-1} A \Psi$, since
$L(A)=A$ and 
$A$ commutes with $\Psi$.
We now verify 
$L(B) = \Psi^{-1} B \Psi$.
Since $I=\sum_{i=0}^d E_i$ it suffices to
show
$E_iL(B)E_j = E_i\Psi^{-1} B\Psi E_j$
for $0 \leq i,j\leq d$. Let $i,j$ be
given. Using the definition 
(\ref{eq:bk1})
of $L$, one finds that 
$E_iL(B)E_j$ is equal to
$E_iBE_j$ times the scalar on the
right in (\ref{eq:calc}).
Using Definition
\ref{def:psi} one finds that
$E_i\Psi^{-1} B \Psi E_j$ is equal to
$E_iBE_j$ times the scalar on the
left in 
(\ref{eq:calc}).
For the moment assume $|i-j|\leq 1$.
Then
$E_iL(B)E_j = E_i\Psi^{-1} B \Psi E_j$
by Lemma
\ref{lem:calc}. Next assume
 $|i-j|>1$.
Then $E_iL(B)E_j = E_i\Psi^{-1} B \Psi E_j$
since $E_iBE_j=0$.
We have shown
$L(B) = \Psi^{-1} B \Psi$.
\end{proof}

\noindent 
In Lemma
\ref{lem:calc} we related the
parameters
$\lbrace t_i\rbrace_{i=0}^d$ and
$\lbrace \theta_i\rbrace_{i=0}^d$.
We now give a more general result along this line.

\begin{theorem}
\label{lem:ssum}
For $0 \leq i,j\leq d$ we have
\begin{eqnarray*}
\frac{t_j}{t_i} = 1 
+ 
\sum_{n=1}^\infty
\Biggl(
\frac{\theta_i - \theta_j}{q-q^{-1}}
\prod_{r=1}^{n-1} 
\frac{ (q^{2r}-q^{-2r})^2 +
(q^r \theta_i - q^{-r} \theta_j)
(q^{-r} \theta_i - q^{r} \theta_j)
}{
(q^{2r}-q^{-2r})
(q^{2r+1}-q^{-2r-1})
}
\Biggr)
\frac{q^n \theta_i - q^{-n} \theta_j}{q^{2n}-q^{-2n}}
\end{eqnarray*}
and
\begin{eqnarray*}
\frac{t_i}{t_j} = 1 
+ 
\sum_{n=1}^\infty
\Biggl(
\frac{\theta_i - \theta_j}{q-q^{-1}}
\prod_{r=1}^{n-1} 
\frac{ (q^{2r}-q^{-2r})^2 +
(q^r \theta_i - q^{-r} \theta_j)
(q^{-r} \theta_i - q^{r} \theta_j)
}{
(q^{2r}-q^{-2r})
(q^{2r+1}-q^{-2r-1})
}
\Biggr)
\frac{q^{-n} \theta_i - q^{n} \theta_j}{q^{2n}-q^{-2n}}.
\end{eqnarray*}
Moreover, in the above sums the large parenthetical expression
is zero for $n> |i-j|$.
\end{theorem}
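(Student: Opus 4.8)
The plan is to recognise each of the two displayed sums as the scalar that the operator $L$ (resp.\ $L^{-1}$) of Theorem~\ref{conj:T} produces on the ``$(i,j)$-block'' $E_i(\,\cdot\,)E_j$ of $V$, and then to evaluate that scalar through Theorem~\ref{thm:Oq}. Throughout I regard each element of $\mathcal{O}_q$ as an operator on $V$, as in Theorem~\ref{thm:Oq}. The starting point is that for $r\in\mathbb Z$, $Y\in\mathcal O_q$, and $0\le i,j\le d$ one has on $V$
\begin{eqnarray*}
E_i\bigl({\rm ad}_r A\,(Y)\bigr)E_j = (q^{r}\theta_i - q^{-r}\theta_j)\,E_iYE_j,
\end{eqnarray*}
because $E_iA=\theta_iE_i$ and $AE_j=\theta_jE_j$. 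Reading off Definitions~\ref{def:bad} and~\ref{def:Tn}, it follows that $E_i\bigl(S_n(Y)\bigr)E_j = c_n(i,j)\,E_iYE_j$, where $c_n(i,j)$ is precisely the $n$-th summand of the first displayed formula, obtained by the substitutions ${\rm ad}_rA\mapsto q^{r}\theta_i-q^{-r}\theta_j$, ${\rm ad}_{-r}A\mapsto q^{-r}\theta_i-q^{r}\theta_j$, ${\rm ad}\,A\mapsto\theta_i-\theta_j$. This is proved by induction on the number of adjoint factors, using linearity and the displayed relation; it holds block-by-block irrespective of the order of composition.

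With this in hand I would argue as follows. Since $\mathcal O_q$ is $A$-standard by Proposition~\ref{prop:LS}(ii), for each $X\in\mathcal O_q$ the sum $L(X)=\sum_n S_n(X)$ has only finitely many nonzero terms, so
\begin{eqnarray*}
E_i\,L(X)\,E_j = \Bigl(\sum_n c_n(i,j)\Bigr)E_iXE_j.
\end{eqnarray*}
On the other hand Theorem~\ref{thm:Oq} gives $L(X)=\Psi^{-1}X\Psi$ on $V$, so by (\ref{eq:psi1}), (\ref{eq:psi2}) we get $E_i\,L(X)\,E_j=(t_j/t_i)\,E_iXE_j$. Comparing the two expressions, the theorem's first identity will follow once I exhibit a single $X$ with $E_iXE_j\ne 0$ and divide by that operator. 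This nonvanishing is the point that needs care, since $E_iXE_j=0$ for $X=B$ whenever $|i-j|>1$ by (\ref{eq:tripProd}); I would obtain it from irreducibility. Choose $0\ne v\in E_jV$, which exists as $\theta_j$ is an eigenvalue of $A$. Then $\mathcal O_q v=V$, and since $E_i\ne 0$ there must be some $X\in\mathcal O_q$ with $E_iXv\ne 0$; as $v=E_jv$ this gives $E_iXE_j\ne 0$. The second identity is obtained identically from $L^{-1}$, using formula (\ref{eq:conj2}), the relation $L^{-1}(X)=\Psi X\Psi^{-1}$ on $V$ (immediate from Theorem~\ref{thm:Oq}), and $E_i\,L^{-1}(X)\,E_j=(t_i/t_j)\,E_iXE_j$; here the scalar shadow of ${\rm ad}_{-n}A$ supplies the final factor $q^{-n}\theta_i-q^{n}\theta_j$.

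It remains to prove the ``moreover'' clause, which is a statement about the scalars alone. Writing $m=|i-j|$, it suffices to show that the $r=m$ factor of the product vanishes, namely
\begin{eqnarray*}
(q^{2m}-q^{-2m})^2 + (q^{m}\theta_i-q^{-m}\theta_j)(q^{-m}\theta_i-q^{m}\theta_j)=0,
\end{eqnarray*}
for then every summand with $n>m$ contains this zero factor, while the case $i=j$ is killed by the factor $\theta_i-\theta_j$. Expanding the product, this reduces to
\begin{eqnarray*}
\theta_i^2+\theta_j^2-(q^{2m}+q^{-2m})\theta_i\theta_j+(q^{2m}-q^{-2m})^2=0,
\end{eqnarray*}
the natural higher analogue of (\ref{eq:qreq}). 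I would verify it by substituting the forms of $\theta_i,\theta_j$ from (\ref{eq:ab}) and using $i-j=\pm m$, whereupon the $a$-dependent terms cancel and the identity drops out. The main obstacle in the whole argument is the block-nonvanishing established in the second paragraph; granting it, the remaining sandwiching and scalar computations are routine.
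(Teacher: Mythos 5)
Your proposal is correct and follows essentially the same route as the paper's own proof: use irreducibility to produce $X$ with $E_iXE_j\neq 0$, sandwich $L(X)=\Psi^{-1}X\Psi$ (resp. $L^{-1}(X)=\Psi X\Psi^{-1}$) between $E_i$ and $E_j$, evaluate via (\ref{eq:conj1}), (\ref{eq:psi1}), (\ref{eq:psi2}), and check the vanishing clause directly from (\ref{eq:ab}). You have merely made explicit the "brief calculation" the paper leaves to the reader (the block-scalar substitution ${\rm ad}_rA\mapsto q^r\theta_i-q^{-r}\theta_j$ and the factor identity at $r=|i-j|$), which is exactly what that calculation amounts to.
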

\begin{proof}
We verify the first displayed equation.
Since the $\mathcal O_q$-module $V$ is irreducible, there exists
$X \in \mathcal O_q$ such that $E_iXE_j\not=0$.
For this $X$, equation
(\ref{eq:LPXP}) holds on $V$.
In equation 
(\ref{eq:LPXP}),
multiply each side on the left by $E_i$ and 
on the right by $E_j$. Evaluate the results using
(\ref{eq:conj1}),
(\ref{eq:psi1}),
(\ref{eq:psi2}) together with $E_iXE_j \not=0$.
 This yields the first displayed equation
after a brief
calculation. The second displayed equation is similarly 
obtained using
$L^{-1}(X) = \Psi X \Psi^{-1}$.
The last assertion of the theorem statement
can be checked directly using
(\ref{eq:ab}).
\end{proof}

\begin{note} \rm It is natural to ask how we discovered
Theorem \ref{conj:T}. The answer is that we first discovered
Theorem \ref{lem:ssum}, and then considered the implications for $L$.
\end{note}

\section{Acknowledgment} The author thanks 
Pascal Baseilhac and Stefan Kolb for sharing their
preprint
\cite{BK}
prior to publication.

\bigskip

\noindent Paul Terwilliger \hfil\break
\noindent Department of Mathematics \hfil\break
\noindent University of Wisconsin \hfil\break
\noindent 480 Lincoln Drive \hfil\break
\noindent Madison, WI 53706-1388 USA \hfil\break
\noindent email: {\tt terwilli@math.wisc.edu }\hfil\break


\begin{thebibliography}{10}

\bibitem{bas2}
P.~Baseilhac.
\newblock An integrable structure related with tridiagonal
algebras.
\newblock {\em
Nuclear Phys. B}
 705
 (2005)
 605--619;
 {\tt arXiv:math-ph/0408025}.



\bibitem{bas1}
P.~Baseilhac.
\newblock Deformed {D}olan-{G}rady relations in quantum integrable
models.
\newblock {\em
Nuclear Phys. B}
 709
 (2005)
 491--521;
 {\tt arXiv:hep-th/0404149}.




\bibitem{basDef}
 P.~Baseilhac.
  \newblock
   The $q$-deformed analogue of the Onsager algebra:
     beyond the Bethe ansatz approach.
      \newblock {\em Nuclear Phys. B}  754
       (2006) 309--328;
        {\tt arXiv:math-ph/0604036}.


    \bibitem{bas6}
       P.~Baseilhac.
   \newblock
      A family of tridiagonal pairs and related symmetric functions.
    \newblock
       {\em J. Phys. A}  39
    (2006) 11773--11791;
      {\tt arXiv:math-ph/0604035}.



 \bibitem{bas8}
 P.~Baseilhac and S.~Belliard.
  \newblock
   Generalized $q$-Onsager algebras and boundary affine Toda field theories.
    \newblock {\em
     Lett. Math. Phys.} {\bf 93}  (2010)  213--228.


\bibitem{basXXZ}
P.~Baseilhac and S.~Belliard.
\newblock
The half-infinite XXZ chain in Onsager's approach.
\newblock
{\em
Nuclear Phys. B} 873 (2013) 550--584.



\bibitem{basBel}
P.~Baseilhac and S.~Belliard.
\newblock An attractive basis for the $q$-Onsager algebra.
      {\tt arXiv:1704.02950}.


\bibitem{BK05}
P.~Baseilhac and K.~Koizumi.
\newblock A new (in)finite dimensional algebra for
quantum integrable models.
\newblock {\em 
 Nuclear Phys. B}  720  (2005) 325--347;
   {\tt arXiv:math-ph/0503036}.















\bibitem{bas4}
 P.~Baseilhac and K.~Koizumi.
   \newblock
    A deformed analogue of Onsager's symmetry
      in the
       $XXZ$ open spin chain.
       \newblock {\em
        J. Stat. Mech. Theory Exp.}  2005,  no. 10, P10005, 15 pp. (electronic);
	  {\tt arXiv:hep-th/0507053}.



  \bibitem{basKoi}
    P.~Baseilhac and K.~Koizumi.
       \newblock
    Exact spectrum of the $XXZ$ open spin chain from the
          $q$-Onsager algebra representation theory.
          \newblock {\em  J. Stat. Mech. Theory Exp.}
       2007,  no. 9, P09006, 27 pp. (electronic);
      {\tt arXiv:hep-th/0703106}.


\bibitem{BK}
P.~Baseilhac and S.~Kolb.
\newblock Braid group action and root vectors for the $q$-Onsager algebra.
\newblock {\em In preparation.}


\bibitem{basnc}
 P.~Baseilhac and K.~Shigechi.
  \newblock
   A new current algebra and the reflection equation.
    \newblock{\em
     Lett. Math. Phys. }
       92
        (2010)   47--65;
{\tt arXiv:0906.1482v2}.





\bibitem{BVu}
P.~Baseilhac and T.~T.~Vu.
\newblock
Analogues of Lusztig's higher order relations for the $q$-Onsager algebra.
\newblock{\em  J. Math. Phys.}
 55
 (2014)
 081707,
 doi: 10.1063/1.489251.

\bibitem{bc}
S.~Belliard, N.~Crampe.
\newblock
Coideal algebras from twisted Manin triples.
\newblock
{\em J. Geom. Phys.} 62 (2012) 2009--2023;
{\tt arXiv:1202.2312}


\bibitem{INT}
T.~Ito, K.~Nomura,  P.~Terwilliger.
\newblock
A classification of sharp tridiagonal pairs.
\newblock{\em Linear Algebra Appl.} 435 (2011) 1857--1884;
{\tt arXiv:1001.1812}.

\bibitem{TD00}
T.~Ito, K.~Tanabe, P.~Terwilliger.
\newblock Some algebra related to ${P}$- and ${Q}$-polynomial association
  schemes,  in:
  \newblock {\em Codes and Association Schemes (Piscataway NJ, 1999)}, Amer.
  Math. Soc., Providence RI, 2001, pp.
       167--192;
       {\tt arXiv:math.CO/0406556}.


\bibitem{ItoTer}
T.~Ito and P.~Terwilliger.
\newblock
Tridiagonal pairs of $q$-Racah type.
\newblock{\em 
J. Algebra} 322 (2009), 68--93. 


\bibitem{IT:aug}
T.~Ito and P.~Terwilliger.
\newblock The augmented tridiagonal algebra.
\newblock{\em
Kyushu J. Math.} {\bf 64}  (2010) 81--144;
{\tt arXiv:0807.3990}.

\bibitem{kolb}
S.~Kolb.
\newblock
Quantum symmetric Kac-Moody pairs.
\newblock {\em
Adv. Math.} 267 (2014) 395-469.


\bibitem{kolb1}
S. Kolb and J. Pellegrini.
\newblock
Braid group actions on coideal
subalgebras of quantized enveloping algebras.
\newblock{\em J. Algebra} 336 (2011), 385--416.

\bibitem{tersub3}
P.~Terwilliger.
\newblock The subconstituent algebra of an association scheme III.
\newblock{\em
J. Algebraic Combin. }
{\bf 2}  (1993) 177--210.

\bibitem{LS99}
P.~Terwilliger.
\newblock Two linear transformations each tridiagonal with respect to an
  eigenbasis of the other.
    \newblock {\em Linear Algebra Appl.}  {\bf 330} (2001) 149--203;
    {\tt arXiv:math.RA/0406555}.



 \bibitem{qSerre}
  P.~Terwilliger.
    \newblock Two relations that generalize the $q$-Serre relations and the
      Dolan-Grady relations. In
        \newblock {\em  Physics and
	  Combinatorics 1999 (Nagoya)}, 377--398, World Scientific Publishing,
	     River Edge, NJ, 2001;
	     {\tt arXiv:math.QA/0307016}.


\bibitem{madrid}
P.~Terwilliger.
\newblock
An algebraic approach to the Askey scheme of orthogonal polynomials.
Orthogonal polynomials and special functions,
255--330, Lecture Notes in Math., 1883,
Springer, Berlin, 2006;
{\tt arXiv:math.QA/0408390}.

\bibitem{uaw}
P.~Terwilliger.
\newblock
The universal Askey-Wilson algebra.
\newblock{\em SIGMA Symmetry Integrability Geom. Methods Appl.}
 7 (2011) Paper 069, 22pp.



\bibitem{pospart}
P.~Terwilliger.
\newblock
The $q$-Onsager algebra and the positive part of
$U_q(\widehat{\mathfrak{sl}}_2)$.
\newblock{\em
 Linear Algebra Appl.}
 521 (2017) 19--56. 

\bibitem{aw}
P.~Terwilliger and R.~Vidunas.
\newblock Leonard pairs and the Askey-Wilson relations.
\newblock {\em J. Algebra Appl.} {\bf 3} (2004) 411--426;
{\tt arXiv:math.QA/0305356}.



 \end{thebibliography}
  \end{document}